\newif\ifwantpicture 
\pgfplotsset{compat=1.11}
\crefname{subsection}{subsection}{subsections}
\Crefname{subsection}{Subsection}{Subsections}
\newtheorem{thm}{Theorem}[section]
\Crefname{thm}{Theorem}{Theorems}
\crefname{thm}{theorem}{theorems}
\newtheorem{cor}[thm]{Corollary}
\newtheorem{lem}[thm]{Lemma}
\newtheorem{prop}[thm]{Proposition}
\Crefname{cor}{Corollary}{Corollaries}
\crefname{cor}{corollary}{corollaries}
\Crefname{lem}{Lemma}{Lemmas}
\crefname{lem}{lemma}{lemmas}
\Crefname{prop}{Proposition}{Propositions}
\crefname{prop}{proposition}{propositions}
\theoremstyle{definition}
\newtheorem{abuse}[thm]{Abuse of terminology}
\newtheorem*{acknowledgments}{Acknowledgments}
\newtheorem{assumption}[thm]{Assumption}
\newtheorem{definition}[thm]{Definition}
\newtheorem{definitions}[thm]{Definitions}
\newtheorem{example}[thm]{Example}
\newtheorem{notation}[thm]{Notation}
\newtheorem{problem}[thm]{Problem}
\newtheorem{observation}[thm]{Observation}
\newtheorem{question}[thm]{Question}
\newtheorem{remark}[thm]{Remark}
\newcounter{case}
\newenvironment{case}[1][\unskip]{\refstepcounter{case}\bf
\medskip \noindent Case \thecase\ #1. \it}{\unskip\upshape}
\Crefname{case}{Case}{Cases}
\crefname{case}{case}{cases}
\newcounter{caseholder}
\numberwithin{case}{caseholder}
\renewcommand{\thecase}{\arabic{case}}
\newcounter{step}
\Crefname{atep}{Step}{Steps}
\crefname{step}{step}{steps}
\numberwithin{step}{stepholder}
\renewcommand{\thestep}{\arabic{step}}
\newcommand{\pref}[1]{(\ref{#1})}
\newcommand{\fullref}[2]{\ref{#1}\pref{#1-#2}}
\newcommand{\fullcref}[2]{\cref{#1}\pref{#1-#2}}
\newcommand{\fullCref}[2]{\Cref{#1}\pref{#1-#2}}
\newcommand{\Z}{\mathbb{Z}}  
\newcommand{\N}{\mathbb{N}}
\newcommand{\XX}{\mathsf{X}}
\renewcommand{\aa}{\mathsf{a}}
\newcommand{\aut}{\mathsf{Aut}(X)}
\newcommand{\autt}{\mathop{\mathsf{Aut}}}
\newcommand{\bdry}{\delta}
\newcommand{\edge}{\mathord{\vrule width 12pt height 3pt depth -1.5pt}}
\newcommand{\edgecut}{\mathcal{E}}
\newcommand{\lsim}[1]{\stackrel{#1}{\sim}}
\newcommand{\requiv}[1]{\stackrel{#1}{\equiv}}
\newcommand{\from}{\leftarrow} 
\newcommand{\cay}{\mathop{\mathsf{Cay}}}
\numberwithin{equation}{thm}
\newcommand{\noprelistbreak}{\smallskip\@nobreaktrue\nopagebreak} 
\begin{document}

\title
[Vertex-transitive graphs with a unique hamiltonian cycle]%
{On vertex-transitive graphs with 
\\ a unique hamiltonian cycle}

\author{Babak Miraftab}
\address{Department of Mathematics and Computer Science, University of Lethbridge, 4401 University
Drive, Lethbridge, Alberta, T1K 3M4, Canada}
\curraddr{Computational Geometry Lab, School of Computer Science, Carleton University
Ottawa, Ontario, K1S 5B6 Canada}
\email{bobby.miraftab@gmail.com}

\author{Dave Witte Morris}
\address{Department of Mathematics and Computer Science, University of Lethbridge, 4401 University
Drive, Lethbridge, Alberta, T1K 3M4, Canada}
\email{dmorris@deductivepress.ca, https://deductivepress.ca/dmorris}

\begin{abstract}
A graph is said to be \emph{uniquely hamiltonian} if it has a unique hamiltonian cycle. For a natural extension of this concept to infinite graphs, we find all uniquely hamiltonian vertex-transitive graphs with finitely many ends, and also discuss some examples with infinitely many ends. In particular, we show each nonabelian free group~$F_n$ has a Cayley graph of degree $2n + 2$ that has a unique hamiltonian circle. (A weaker statement had been conjectured by A.\,Georgakopoulos.) Furthermore, we prove that these Cayley graphs of~$F_n$ are outerplanar.
\end{abstract}

\maketitle


\section{Introduction}

A graph is said to be \emph{uniquely hamiltonian} if it has a unique hamiltonian cycle. The first major result on this topic is the following well-known \lcnamecref{smith}, which implies that cubic graphs are never uniquely hamiltonian.

\begin{thm}[C.\,A.\,B.\,Smith, 1946 {\cite{MR19300}}]\label{smith}
Every edge of any finite, cubic graph is contained in an even number of hamiltonian cycles.
\end{thm}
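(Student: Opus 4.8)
The plan is to run Thomason's \emph{lollipop} (rotation--exchange) argument. Fix an edge $e = xy$ of the cubic graph~$G$, and let $\mathcal{P}$ denote the set of all hamiltonian paths in~$G$ that have $x$ as an endpoint and use~$e$ as their edge at~$x$; equivalently, the hamiltonian paths that start $x, y, \dots$. First I would record a bijection between the hamiltonian cycles of~$G$ through~$e$ and a distinguished subset of~$\mathcal{P}$. Indeed, if $C$ is a hamiltonian cycle with $e \in C$, then (since $\deg x = 3$) $C$ uses exactly one further edge $xz$ at~$x$, and $C - xz$ is a hamiltonian path starting $x, y, \dots$ whose other endpoint~$z$ is a neighbour of~$x$; conversely, any path in~$\mathcal{P}$ whose non-$x$ endpoint is adjacent to~$x$ closes up, by adding that single edge, to a unique hamiltonian cycle through~$e$. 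Hence it suffices to prove that the number of paths in~$\mathcal{P}$ whose non-$x$ endpoint is a neighbour of~$x$ is \emph{even}.

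To this end I would build an auxiliary graph~$\mathcal{H}$ on the vertex set~$\mathcal{P}$. Given $P = (x, y, \dots, w, z) \in \mathcal{P}$ with ``free'' endpoint~$z$ and penultimate vertex~$w$, consider each neighbour~$a$ of~$z$ in~$G$ with $a \neq w$ and $a \neq x$; letting $a'$ be the successor of~$a$ along~$P$ in the direction of~$z$, the path $P - aa' + za$ again lies in~$\mathcal{P}$ (it still starts $x, y, \dots$, is still hamiltonian, and now has free endpoint~$a'$), and I join~$P$ to it in~$\mathcal{H}$. The two facts to verify are: (i) this exchange relation is symmetric --- performing the analogous move on $P - aa' + za$ at its free endpoint~$a'$, along the edge to~$a$, recovers~$P$, where one uses crucially that $a \neq x$ (which is exactly why $a = x$ is forbidden); so $\mathcal{H}$ is a genuine graph, the paths produced from a given~$P$ being moreover pairwise distinct. (ii) Because $\deg z = 3$, the free endpoint~$z$ of~$P$ has exactly two neighbours besides~$w$: if neither of them is~$x$, then $P$ has degree~$2$ in~$\mathcal{H}$; if one of them is~$x$, then $P$ has degree~$1$ in~$\mathcal{H}$, the forbidden move (adding the edge~$zx$) being precisely the one that would instead close~$P$ into a hamiltonian cycle through~$e$. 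Thus the odd-degree vertices of~$\mathcal{H}$ are exactly the paths in~$\mathcal{P}$ whose free endpoint is a neighbour of~$x$.

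Finally, the handshake lemma applied to the finite graph~$\mathcal{H}$ says that its number of odd-degree vertices is even; combined with the two reductions above, this gives that the number of hamiltonian cycles of~$G$ through~$e$ is even, as desired. The one delicate point --- and the main obstacle to a clean write-up --- is (i) together with (ii): one must check honestly that the exchange move is a well-defined symmetric relation and that \emph{every} vertex of~$\mathcal{H}$ has degree exactly~$1$ or exactly~$2$, which forces careful bookkeeping of possible coincidences among~$x$, $w$, and the neighbours of~$z$, and of the very small cases. This is also the only place the cubic hypothesis is truly used: a vertex of degree~$4$ acting as a free endpoint would contribute a vertex of degree~$3$ to~$\mathcal{H}$ and wreck the parity count. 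Everything else is routine.
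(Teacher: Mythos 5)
Your argument is correct: it is exactly Thomason's lollipop (rotation--exchange) proof of Smith's theorem. The paper itself does not prove this statement at all --- it is quoted as a classical result with a citation to Tutte's 1946 note, where Smith's original argument was algebraic rather than via the exchange graph --- so there is no in-paper proof to compare against. Your two reductions (hamiltonian cycles through $e=xy$ biject with the hamiltonian paths starting $x,y,\dots$ whose free endpoint is adjacent to~$x$; these are precisely the odd-degree vertices of the auxiliary graph~$\mathcal{H}$, so the handshake lemma finishes) are sound, and you have correctly isolated the two points that need honest verification: the symmetry of the exchange move (which is where excluding $a=x$ matters, since that move would delete the edge $xy$ and leave~$\mathcal{P}$) and the fact that every vertex of~$\mathcal{H}$ has degree exactly $3-\deg_{\mathcal H}(\text{forbidden moves})\in\{1,2\}$, which is the one place the cubic hypothesis enters. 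The only cosmetic remark is that the endpoint $z$ of a path in $\mathcal{P}$ automatically differs from $y$ and from its own predecessor $w\neq x$, so the case analysis of coincidences you flag as delicate is in fact quite short once written out.
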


It was conjectured by J.\,Sheehan \cite{Sheehan} in 1974 that regular graphs of degree~$4$ also are never uniquely hamiltonian. It is well known that, if true, this would imply the more general conjecture that no regular graphs (other than cycles) are uniquely hamiltonian (see \cite[p.~43]{MR3372339}, for example).
At present, it is known that an $r$-regular graph cannot be uniquely hamiltonian if either $r$ is odd \cite{MR499124} or $r > 22$ \cite{MR2290229}.
Vertex-transitivity (see \cref{VertTransDefn}) is a stronger condition than regularity, and the conjecture has been established for these much more special graphs, without any restriction on the degree (see the start of 
\cref{finmanyends} for a short proof):

\begin{restatable}[M.\,\v{S}ajna and A.\,Wagner {\cite[Cor.~2.2]{MR3372339}}]{prop}{FiniteCase} 
  \label{FiniteCase}
Cycles are the only finite, vertex-transitive graphs with a unique hamiltonian cycle.
\end{restatable}
This paper addresses the extension of the above \lcnamecref{FiniteCase} to infinite graphs. This is a special case of a question of B.\,Mohar \cite{mohar}, which asks to what extent results on unique hamiltonicity in finite graphs generalize to the infinite setting.
(See \cite{Heuer} and~\cite{max} for some important progress on this problem.)

The first step is to choose an analogue of the notion of ``hamiltonian cycle'' that applies to infinite graphs. Classically, the following notion was used:

\begin{definition}[cf.\ {\cite[pp.~286 and 297]{NashWilliams-Infinite}}] \label{2wayHPDefn}
A \emph{two-way-infinite hamiltonian path} in a countably infinite graph~$X$ is a two-way-infinite list 
    \[ \ldots, x_{-2}, x_{-1}, x_{0}, x_{1}, x_{2}, \ldots\]
of all the vertices of~$X$ (without any repeats), such that $x_i$ is adjacent to~$x_{i+1}$ for each~$i$.
\end{definition}

In this setting, \cref{FiniteCase} generalizes in the obvious way:

\begin{restatable}{prop}{wayUHP}\label{2wayUHP}
The two-way-infinite path~$P_\infty$ is the only vertex-transitive graph with a unique two-way-infinite hamiltonian path.
    \ifwantpicture
    \[ \begin{tikzpicture}
        \draw[line width=0.5mm] (-2.5,0)--(3.5,0);
	    \foreach \i in {-2,...,3}{
	       \draw (\i,0) node [circle,fill, inner sep=2pt] {};}
    \end{tikzpicture} \]
    \fi 
\end{restatable}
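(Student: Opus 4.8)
The plan is to use vertex-transitivity to force $\aut$ to be essentially the automorphism group of a line, and then to show that any edge of~$X$ missing from the unique hamiltonian path can be rerouted into a second one. Write $H=(\dots,x_{-1},x_0,x_1,\dots)$ for that path and identify $V(X)$ with~$\Z$ via $x_i\mapsto i$. Since $H$ is the only two-way-infinite hamiltonian path of~$X$, every automorphism of~$X$ maps $H$ onto itself; as $V(H)=V(X)$, restriction to~$H$ embeds $\aut$ into $\autt(P_\infty)=D_\infty=\langle\sigma,\tau\rangle$, where $\sigma\colon i\mapsto i+1$ and $\tau\colon i\mapsto -i$. I would first record the elementary fact that a transitive subgroup of~$D_\infty$ contains a nontrivial translation~$\sigma^{k}$ with $k\in\{1,2\}$, and that when $k=2$ it also contains a reflection through the midpoint of an edge, which after composing with a power of~$\sigma^{2}$ may be taken to be $i\mapsto 1-i$. (A transitive cyclic subgroup must be $\langle\sigma\rangle$; a transitive dihedral subgroup $\langle\sigma^{n},\rho\rangle$, where $\rho\colon i\mapsto c-i$, has the orbit of~$0$ contained in $n\Z\cup(c+n\Z)$, forcing $n\le 2$ and, if $n=2$, $c$ odd.)

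Now suppose, for contradiction, that $X\neq P_\infty$, so $X$ has an edge $x_ax_b\notin H$; put $d=|a-b|\ge 2$. Translating this chord by the $\sigma^{jk}$ and, when $k=2$, also reflecting it by $i\mapsto 1-i$ (which swaps the parities of its endpoints), one checks that either \textup{(i)}~$X$ contains \emph{every} chord $x_ix_{i+d}$, $i\in\Z$ --- which happens whenever $k=1$, and also when $k=2$ with $d$ even --- or else \textup{(ii)}~$k=2$, $d$ is odd, and $X$ contains a copy of the graph $G_d$ on the vertex set~$\Z$ with edge set $\{\,x_ix_{i+1}:i\in\Z\,\}\cup\{\,x_{2m}x_{2m+d}:m\in\Z\,\}$. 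In case~\textup{(i)} I would produce a second path by a finite surgery on~$H$: delete $x_0x_1$ and $x_dx_{d+1}$ and insert $x_0x_d$ and $x_1x_{d+1}$, giving
\[(\dots,x_{-1},x_0,x_d,x_{d-1},\dots,x_2,x_1,x_{d+1},x_{d+2},\dots),\]
a two-way-infinite hamiltonian path that contains $x_0x_d\notin H$ --- contradicting uniqueness.

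Case~\textup{(ii)} is the heart of the matter and the step I expect to require genuine care: I must exhibit a second two-way-infinite hamiltonian path of~$G_d$ itself. Splitting $\Z$ into even and odd vertices presents $G_d$ as the bipartite graph in which the even vertex $x_{2m}$ is joined precisely to the odd vertices $x_{2m-1}$, $x_{2m+1}$, $x_{2m+d}$; writing $h=(d-1)/2$, indexing the even vertices by~$m$, and calling $x_{2m+1}$ ``odd~$m$'', this says even~$m$ is adjacent to odd $m-1$, odd~$m$, and odd $m+h$. A hamiltonian path of such a graph alternates even and odd vertices, hence is specified by its sequence of offsets; I would take a period of length~$h+1$ in which the even-to-odd offsets are two copies of~$h$ and $h-1$ copies of~$-1$ in a suitable cyclic order, with every odd-to-even offset equal to~$0$, and then verify that the induced orderings of the even vertices and of the odd vertices are each bijections onto~$\Z$ --- equivalently, that this offset pattern tiles~$\Z$. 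Any such path uses a chord, hence differs from~$H$, the final contradiction. (For $d=5$ the path is $(\dots,x_{-4},x_1,x_0,x_5,x_4,x_3,x_2,x_7,x_6,x_{11},\dots)$; for $d=3$ it is the sawtooth $(\dots,x_1,x_0,x_3,x_2,x_5,x_4,\dots)$.) Therefore $X=P_\infty$; and $P_\infty$, being $2$-regular, has no spanning subgraph in which every vertex has degree~$2$ other than itself, so it indeed has a unique two-way-infinite hamiltonian path.
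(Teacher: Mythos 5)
Your argument is correct and follows essentially the same route as the paper: uniqueness forces $\aut$ into $D_\infty$, transitivity supplies a translation by $1$ or~$2$, the chord propagates to all (or all even-indexed) positions, and an explicit second hamiltonian path gives the contradiction --- indeed your case~(ii) path is exactly the paper's $(d, -1^{d-2}, d, -1)^\infty$. The one verification you defer, that your offset pattern tiles~$\Z$, is immediate because both $h$ and $-1$ are congruent to $-1 \pmod{h+1}$, so the partial sums of any ordering automatically form a complete residue system.
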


More recently, R.\,Diestel and others (cf.~\cite{Diestel-CycleSpace}) introduced a topological point of view that suggests a different analogue of hamiltonian cycle for infinite graphs (that are locally finite). The inspiration is that:
\begin{itemize}
    \item a hamiltonian cycle in a finite graph is a copy of the circle~$S^1$ in~$X$ (that contains all of the vertices of~$X$),
    and 
    \item adding the point at infinity to a two-way-infinite hamiltonian path in an infinite graph~$X$ yields a copy of~$S^1$ in the one-point compactification of~$X$ (that contains all of the vertices of~$X$).
\end{itemize}
The new approach recognizes that circles in a different compactification (specifically, the Freudenthal compactification) can be more interesting.

\begin{definitions}[\cite{Diestel-CycleSpace}]
Let $X$ be a connected, locally finite graph. 
    \begin{enumerate}
     
    \item The \emph{Freudenthal compactification} of~$X$ (denoted~$|X|$) is a topological space that is obtained from~$X$ (considered as a $1$-dimensional simplicial complex) by adding a point at infinity for each ``end'' of~$X$. (An intuitive understanding suffices for our purposes, but the interested reader can find a rigorous definition in \cite[\S4]{Diestel-CycleSpace}.) For example, the graphs pictured in \cref{2endedUHC} below each have exactly two ends (one end is to the left, and the other is to the right), because the \emph{number of ends} of~$X$ is the smallest $k \in \N \cup \infty$, such that, for every finite subset~$K$ of $V(X)$, the vertex-deleted graph $X \smallsetminus K$ has at most~$k$ infinite connected components. 
    
    \item A \emph{hamiltonian circle} in~$X$ is an embedded copy of the circle~$S^1$ in~$|X|$ that contains all of the vertices of~$X$  \cite[p.~75]{Diestel-CycleSpace}.
    \end{enumerate}
\end{definitions}

\begin{abuse}
By definition, if $C$ is a hamiltonian circle in a graph~$X$, then $C$ is a subset of the Freudenthal compactification~$|X|$. Its intersection with~$X$ is a spanning $2$-regular subgraph (i.e., a \emph{$2$-factor}) of~$X$, and it is traditional to abuse terminology by saying that this $2$-factor is a hamiltonian circle (even though, technically speaking, the hamiltonian circle is the union of the $2$-factor with the set of all ends of~$X$, or, equivalently, the hamiltonian circle is the \emph{closure} of this $2$-factor in the topological space~$|X|$).
\end{abuse}

If an infinite graph has only one end, then its Freudenthal compactification is the same as its one-point compactification, so it is easy to see that every hamiltonian circle is a two-way-infinite hamiltonian path. Thus, the following observation is immediate from \cref{2wayUHP} (since the two-way-infinite path~$P_\infty$ obviously does not have only one end):

\begin{cor}\label{nooneend}
No one-ended, vertex-transitive graph has a unique hamiltonian circle.
\end{cor}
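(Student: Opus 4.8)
The plan is to deduce this from \cref{2wayUHP} via the remark, made just above the statement, that in a one-ended (locally finite) graph the Freudenthal compactification coincides with the one-point compactification, so that hamiltonian circles and two-way-infinite hamiltonian paths are, in essence, the same objects. So suppose, toward a contradiction, that $X$ is a one-ended vertex-transitive graph with a unique hamiltonian circle~$C$. (We may assume $X$ is locally finite, since otherwise the notion of hamiltonian circle does not even apply.)

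The first step is to verify that $C$, regarded as a $2$-factor of~$X$, is a two-way-infinite hamiltonian path. Since $|X| = X \cup \{\infty\}$, the embedded copy of~$S^1$ that constitutes~$C$ must contain~$\infty$: an embedded circle lying entirely inside the $1$-complex~$X$ would be a finite cycle, and a finite cycle cannot pass through all of the infinitely many vertices of~$X$. Deleting the single point~$\infty$ from~$C$ therefore leaves an embedded copy of an open interval that meets every vertex, and local finiteness of~$X$ forces this to be a two-way-infinite hamiltonian path in the sense of \cref{2wayHPDefn}. This is the ``easy to see'' step referred to in the text; it is the only point requiring any genuine (though entirely routine) topological care, and I expect it to be the main, if minor, obstacle.

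Finally, I would note that this two-way-infinite hamiltonian path is the unique one in~$X$: for any two-way-infinite hamiltonian path~$P$, its closure $P \cup \{\infty\}$ in~$|X|$ is a hamiltonian circle, and the assignment $P \mapsto \overline{P}$ is injective on $2$-factors, so a second hamiltonian path would produce a second hamiltonian circle, contradicting the uniqueness of~$C$. Hence $X$ has a unique two-way-infinite hamiltonian path, so \cref{2wayUHP} gives $X = P_\infty$. But $P_\infty$ has two ends, not one, contradicting the hypothesis that $X$ is one-ended, and the proof is complete.
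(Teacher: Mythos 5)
Your proof is correct and follows exactly the paper's route: the paper derives the corollary from \cref{2wayUHP} via the observation that in a one-ended graph the Freudenthal and one-point compactifications coincide, so hamiltonian circles correspond bijectively to two-way-infinite hamiltonian paths, and $P_\infty$ is not one-ended. You have merely filled in the routine topological details that the paper leaves as ``easy to see.''
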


If a graph~$X$ has two ends, then it is compactified by adding two points at infinity. In this case, we have the following description of hamiltonian circles in~$X$:

\begin{lem}[{\cite[Theorem 2.5]{MR2800969}}]\label{hamin2ended}
Let $X$ be a graph whose Freudenthal compactification has exactly two points at infinity; call them $+\infty$ and~$-\infty$.
A $2$-factor~$C$ of~$X$ is a hamiltonian circle if and only if it is the vertex-disjoint union of two two-way-infinite paths $P_1$ and $P_2$, each of which has one end at~$-\infty$ and the other end at~$+\infty$.

\[
\begin{tikzpicture}
        \begin{scope}[scale=0.7]
        \draw[line width=0.5mm] (0,0) arc(180:0:2.5cm and 0.6cm);
	\draw (2,0.65) node[below] {$P_1$};
        \draw[line width=0.5mm] (0,0) arc(180:0:2.5cm and -0.6cm);
	\draw (3,-0.65) node[above] {$P_2$};
	\draw (0,0) node [circle,fill, inner sep=2pt] {};
	\draw (-0.1,0) node [left] {$-\infty$};
	\draw (5,0) node [circle,fill, inner sep=2pt] {};
	\draw (5.1,0) node [right] {$+\infty$};
        \end{scope}
        \end{tikzpicture} 
\]  
\end{lem}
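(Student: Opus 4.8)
The plan is to prove the two implications separately, using two standard properties of the Freudenthal compactification. First, any subgraph $A$ of $X$ (a set of vertices together with a set of edges, regarded as a subcomplex) is closed in $X$, and $X$ is open in $|X|$, so $\overline{A} = A \cup E_A$, where $E_A$ is the set of ends of $X$ in the closure of $A$; concretely, $E_A$ consists of those ends $\omega$ such that, for every finite $S \subseteq V(X)$, the component of $X \setminus S$ representing $\omega$ contains infinitely many vertices of $A$. In particular, the closure of a ray is the ray together with the unique end containing it, and the closure of a two-way-infinite path is the path together with the one or two ends at which its two tails accumulate; since $|X|$ has exactly the two ends $-\infty$ and $+\infty$, each tail of each component of a $2$-factor of $X$ accumulates at $-\infty$ or at $+\infty$. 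Second, because $X$ is locally finite and $C$ is $2$-regular, $C$ is a vertex-disjoint union of finite cycles and two-way-infinite paths.

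\smallskip\noindent\emph{($\Leftarrow$).} Suppose $C = P_1 \sqcup P_2$ where each $P_i$ is a two-way-infinite path with one tail accumulating at $-\infty$ and the other at $+\infty$, and $V(P_1)\cup V(P_2)=V(X)$. By the first paragraph $\overline{P_i} = P_i \cup \{-\infty,+\infty\}$, and traversing $P_i$ while sending the two ends of the underlying line to $-\infty$ and $+\infty$ gives a homeomorphism $h_i\colon[0,1]\to\overline{P_i}$ with $h_i(0)=-\infty$, $h_i(1)=+\infty$ (continuity at the endpoints is exactly the accumulation hypothesis). Since $P_1$ and $P_2$ are distinct components of $C$ they are vertex- and edge-disjoint, so $\overline{P_1}\cap\overline{P_2}=\{-\infty,+\infty\}$. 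Gluing $h_1$ and $h_2$ along the two boundary points yields a continuous bijection from $S^1$ onto $\overline{P_1}\cup\overline{P_2}=\overline C$; as $S^1$ is compact and $|X|$ is Hausdorff, it is a homeomorphism. Thus $\overline C$ is an embedded circle in $|X|$ containing every vertex of $X$, i.e.\ $C$ is a hamiltonian circle.

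\smallskip\noindent\emph{($\Rightarrow$).} Suppose $\overline C\cong S^1$. If $C$ had a finite cycle component $Z$, then $Z$ would be compact, hence closed in $\overline C$, and also open in $\overline C$ because $\overline C = Z\sqcup\overline{C\setminus Z}$ with $\overline{C\setminus Z}$ closed; connectedness of $S^1$ would force $\overline C=Z$, so $X$ would be finite, contradicting that $X$ has two ends. Hence every component of $C$ is a two-way-infinite path, and writing $\overline C = C\cup E$ with $E\subseteq\{-\infty,+\infty\}$ we get $\overline C\setminus E = C$, a disjoint union of lines; since $C$ is infinite and $\overline C\cong S^1$ is compact, $E\neq\emptyset$. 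If $|E|=1$, say $E=\{+\infty\}$, then $\overline C\setminus E\cong S^1\setminus\{\text{one point}\}$ is connected, so $C$ is a single two-way-infinite path $P$ with $\overline P=\overline C\cong S^1$; thus $P$ accumulates only at $+\infty$, so both its tails converge to $+\infty$. Picking a finite $S\subseteq V(X)$ that separates $-\infty$ from $+\infty$ and letting $D_-$ be the (infinite) component of $X\setminus S$ representing $-\infty$, both tails of $P$ eventually lie outside $D_-$, so $P$ meets $D_-$ in only finitely many vertices; this contradicts that $P$ is spanning and $D_-$ is infinite. Therefore $|E|=2$, so $\overline C\setminus\{-\infty,+\infty\}\cong S^1\setminus\{\text{two points}\}$ has exactly two components, and $C=P_1\sqcup P_2$ with each $P_i$ a two-way-infinite path. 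Finally $\overline{P_i}$ is the closure in $S^1$ of one of the two open arcs of $S^1\setminus\{-\infty,+\infty\}$, hence a closed arc with endpoints $-\infty$ and $+\infty$, so one tail of $P_i$ accumulates at $-\infty$ and the other at $+\infty$. Since $P_1,P_2$ are vertex-disjoint and span $X$, this is the required decomposition.

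\smallskip
The difficulty here is topological rather than combinatorial. The point needing the most care is verifying that the closure of a ray in $|X|$ picks up exactly one end (this is where the two-ends hypothesis is essential), together with checking that the ``two arcs glued at their endpoints'' and ``$S^1$ minus $k$ points'' facts apply to the subspace topology inherited from $|X|$ (the compact-to-Hausdorff principle handles this uniformly). Among the combinatorial steps, ruling out the case that $\overline C$ contains only one of $\pm\infty$, by setting the spanning condition against a finite separator of the two ends, is the one I would be most careful with.
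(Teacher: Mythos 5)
The paper does not prove this lemma at all: it is imported verbatim from the cited reference \cite[Thm.~2.5]{MR2800969}, so there is no in-paper argument to compare against. Judged on its own, your proof is correct and complete. The ($\Leftarrow$) direction is the standard gluing of two arcs $\overline{P_i}\cong[0,1]$ along their common endpoints $\pm\infty$, with the compact-to-Hausdorff principle upgrading the continuous bijection to a homeomorphism; the only facts you use implicitly are that $|X|$ is compact Hausdorff for a connected locally finite graph and that a ray converges to the unique end containing it, both standard. In the ($\Rightarrow$) direction, the three key steps are all sound: finite cycle components are clopen in $\overline C\cong S^1$ and hence impossible; $E=\emptyset$ fails by non-compactness of an infinite subgraph; and $|E|=1$ fails because a double ray with both tails converging to $+\infty$ meets the ($-\infty$)-side of a finite separator in only finitely many vertices, contradicting spanning. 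The final assertion that each $P_i$ has one tail at each end deserves the one extra line you essentially already have: each tail converges to exactly one end, and since $\overline{P_i}$ contains both ends, the two tails must split between them. Your self-assessment of where the care is needed matches where the actual content lies.
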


From this description, it is clear that the two graphs in the following result do each have a hamiltonian circle. (In each picture, the edges of the hamiltonian circle are black, and the other edges of the graph are white.)

\begin{restatable}{thm}{secondmain}\label{2endedUHC}
Up to isomorphism, the only two-ended, vertex-transitive graphs with a unique hamiltonian circle are:
	\begin{enumerate}

	\item \label{2endedUHC-path}
	the square of a two-way-infinite path  
        \ifwantpicture
        \[
		\begin{tikzpicture}
	\begin{scope}[decoration={markings,mark=at position 
        0.59 with {\arrow{>}}}] 
	\draw[<->,>=latex',line width=0.9mm] (-3.5,1)--(3.5,1);
	\draw[<->,>=latex',line width=0.5mm, white] (-3.43,1)--(3.43,1);
        \draw[line width=0.5mm] (-2,1) arc(0:90:1cm and 0.6cm);
        \draw[line width=0.5mm] (-2,1) arc(180:0:1cm and 0.6cm);
        \draw[line width=0.5mm,] (0,1) arc(180:0:1cm and 0.6cm);
        \draw[line width=0.5mm,] (2,1) arc(180:90:1cm and 0.6cm);
        \draw[line width=0.5mm,] (-1,1) arc(180:360:1cm and 0.6cm);
        \draw[line width=0.5mm,] (1,1) arc(180:360:1cm and 0.6cm);
        \draw[line width=0.5mm,] (-3,1) arc(180:360:1cm and 0.6cm);
	\end{scope}
	\foreach \i in {-3,...,3}{
	\draw (\i,1) node [circle,fill, inner sep=2pt] {};}
        \end{tikzpicture}
        \]
        \fi 

	\item \label{2endedUHC-ladder}
	the two-way-infinite ladder	
        \ifwantpicture
        \[ \begin{tikzpicture}
	\begin{scope}[decoration={markings,mark=at position 
        0.59 with {\arrow{>}}}] 
	\draw[->,>=latex',line width=0.5mm] (3,1)--(3.5,1);
	\draw[->,>=latex', line width=0.5mm] (3,0)--(3.5,0);
	\draw[->,>=latex',,line width=0.5mm] (-2,1)-- 
        (-2.5,1);
	\draw[->,>=latex',, line width=0.5mm] (-2,0)--     
        (-2.5,0);
	\end{scope}
	\foreach \i in {-2,...,3}{
		\draw[line width=0.9mm] (\i,1)--(\i,0);
		\draw[line width=0.5mm, white] (\i,1)--(\i,0);
		}
	\foreach \i in {-2,0,2}{
	\draw[line width=0.5mm] (\i,1)--(\i+1,1);}
	\foreach \i in {-2,0,2}{
	\draw[line width=0.5mm] (\i,0)--(\i+1,0);}
	\foreach \i in {-1,1}{
	\draw[,line width=0.5mm] (\i,0)--(\i+1,0);}
	\foreach \i in {-1,1}{
	\draw[,line width=0.5mm] (\i,1)--(\i+1,1);}
	\foreach \i in {-2,...,3}{
	\draw (\i,1) node [circle,fill, inner sep=2pt] {};
	\draw (\i,0) node [circle,fill, inner sep=2pt] {};}
        \end{tikzpicture} \]
    \fi 
	\end{enumerate}
\end{restatable}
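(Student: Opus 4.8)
The plan is to combine \cref{hamin2ended} with the structure theory of two-ended vertex-transitive graphs to reduce to a bounded, periodic picture, and then to eliminate all but the two listed graphs by repeatedly rerouting the (unique) hamiltonian circle.

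\emph{Setup.} Let $X$ be two-ended and vertex-transitive with a unique hamiltonian circle~$C$. By \cref{hamin2ended}, $C = P_1 \sqcup P_2$, where $P_1$ and $P_2$ are vertex-disjoint two-way-infinite paths, each running from $-\infty$ to $+\infty$; write $V(P_1) = \{\,p_i : i \in \Z\,\}$ and $V(P_2) = \{\,q_i : i \in \Z\,\}$, with consecutive indices adjacent along the path and the index increasing together with the vertex towards $+\infty$. Since $C$ is the unique hamiltonian circle it is invariant under~$\aut$; as $\aut$ acts transitively on $V(X) = V(P_1)\sqcup V(P_2)$ while each automorphism permutes $\{P_1,P_2\}$, some automorphism must interchange $P_1$ and $P_2$ (otherwise $V(P_1)$ would be a proper nonempty $\aut$-invariant set). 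Let $r$ denote the common degree of the vertices; since $X$ is connected and $C$ is a spanning subgraph, $r \ge 3$ (if $r = 2$ then $X = C$ is disconnected). Call an edge not on~$C$ a \emph{chord}, and a chord joining $P_1$ to $P_2$ a \emph{rung}. It is classical that a two-ended vertex-transitive graph carries an automorphism translating it along its two ends; combining such an automorphism with the interchanging one yields $\tau \in \aut$ with $\tau(p_i) = p_{i+t}$ and $\tau(q_i) = q_{i+t}$ for a single integer $t \ge 1$. Two consequences will be used repeatedly: chords have \emph{bounded span}, i.e.\ there is a constant $D$ with $|i-j|\le D$ whenever $p_ip_j$, $q_iq_j$, or $p_iq_j$ is a chord; and the \emph{chord pattern is periodic} with period~$t$, so the set of offsets of chords issuing from $p_i$ depends only on $i \bmod t$ (and similarly for $P_2$, with the two patterns related by the interchanging automorphism). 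In particular $X$ is described by a finite amount of data.

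\emph{Rerouting.} Uniqueness forbids certain bounded configurations. The basic one: $X$ has no two rungs $p_aq_b$ and $p_{a+1}q_{b-1}$. Indeed, from such a pair form
  \[ C' \;=\; \bigl(C \sm \{\,p_ap_{a+1},\ q_{b-1}q_b\,\}\bigr)\,\cup\,\{\,p_aq_b,\ p_{a+1}q_{b-1}\,\}; \]
tracing its two components shows that one runs from the left end of $P_1$ to the right end of $P_2$ and the other from the left end of $P_2$ to the right end of $P_1$, so by \cref{hamin2ended} $C'$ is a hamiltonian circle, and $C' \ne C$ since $C'$ uses rungs --- contradicting uniqueness. (By contrast, the analogous exchange on $p_aq_b$ and $p_{a+1}q_{b+1}$ produces a $2$-factor whose components run from $-\infty$ to $-\infty$ and from $+\infty$ to $+\infty$, which by \cref{hamin2ended} is \emph{not} a hamiltonian circle; this is why two graphs survive rather than none.) Cutting and repasting more edges over a bounded window gives further forbidden configurations; for example, a pair of rungs whose offsets differ by~$3$ can be shown to yield a second hamiltonian circle via a repasting spread over about five consecutive indices, and similar moves rule out chords lying within a single rail.

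\emph{Classification.} It remains to check that vertex-transitivity, periodicity, bounded span, and these forbidden configurations force $X$ to be the two-way-infinite ladder or~$P_\infty^2$. I would proceed as follows. First, every vertex carries at least one rung (else $X$ splits into its two rails), hence, by transitivity, exactly the same number $k \ge 1$ of them; and $t = 1$, since a period $\ge 2$ produces a mismatch between consecutive vertices yielding a forbidden pair. Next, the rungs form an essentially parallel bundle: using the forbidden configurations together with transitivity, there is a constant $c$ such that every rung is $p_iq_{i+c}$, $p_iq_{i+c-1}$, or $p_iq_{i+c+1}$, whence $k \le 2$ and the multiset of rung-offsets at a vertex must be $\{c\}$ (when $k=1$) or $\{c,c+1\}$ (when $k=2$). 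Then any remaining within-rail chords are excluded by further reroutes, so $r = k+2 \le 4$; and relabelling the indices, $r=3$ yields exactly the ladder and $r=4$ yields exactly~$P_\infty^2$. I expect the parallel-bundle step to be the main obstacle: extracting the rigid picture from purely local no-crossing conditions, and in particular handling rungs whose two endpoints are far apart along the rails. The bound~$D$ on spans and the period-$1$ structure are what make this manageable, reducing it to the inspection of finitely many candidate patterns, each dispatched by an explicit reroute.
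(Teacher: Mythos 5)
Your overall strategy is the same as the paper's: decompose $C = P_1 \cup P_2$ via \cref{hamin2ended}, extract a translation automorphism, and eliminate all other candidates by rerouting~$C$, with the end-behaviour condition of \cref{hamin2ended} explaining why exactly two graphs survive. Your basic forbidden configuration (the anti-parallel rungs $p_aq_b$, $p_{a+1}q_{b-1}$) is correct, and your parenthetical about $p_aq_b$, $p_{a+1}q_{b+1}$ shows you have the right mechanism in hand. But the proposal leaves genuine gaps at exactly the points where the work lies, and its concluding reduction is not sound as stated.

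First, the common translation length: an automorphism preserving both rails translates $P_1$ by some $t_1$ and $P_2$ by some $t_2$, and there is no a priori reason these agree; the paper's \cref{label2Ended} needs a separate argument (conjugating by the swapping automorphism and using local finiteness to force $t_1^2 = t_2^2$) before a single~$t$ exists. You assert it without proof, yet your bounded-span and periodicity claims depend on it. Second, and more seriously, the reduction ``to the inspection of finitely many candidate patterns'' fails: neither the degree~$r$ nor the offset difference~$\ell$ between two rungs at a common vertex is bounded by any universal constant, so the configurations to be forbidden form infinite families parametrized by~$\ell$, each requiring its own reroute. In particular your basic configuration never occurs in $\cay(\Z;\pm2,\pm\ell)$ for odd $\ell\ge3$ (offsets $0$ and~$\ell$ cannot produce an adjacent anti-parallel pair), and this is precisely the hard surviving case; the paper dispatches it with an \emph{infinite} modification coming from a spanning $P_\ell\mathbin{\square}P_\infty$ subgraph, uniformly in~$\ell$, while your claimed five-index repasting is not exhibited and would have to be constructed for every~$\ell$. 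Similarly, your one-line assertion that ``$t=1$'' follows from a mismatch between consecutive vertices hides the paper's even/odd counting argument plus an explicit reroute in a cubic spanning subgraph. Finally, within-rail chords are handled in the paper not by further reroutes but by noting that the induced subgraph on $V(P_1)$ is itself vertex-transitive with a unique two-way-infinite hamiltonian path, so \cref{2wayUHP} forces it to equal~$P_\infty$; you may want to borrow that step, since it replaces another unbounded case analysis with one clean application of an earlier result. As it stands, the proposal is a correct plan whose two central steps (the simultaneous translation and the parallel-bundle rigidity) remain unproved.
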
 

It is well known \cite[Prop.~6.1]{Mohar-relations} that the number of ends of a (locally finite) vertex-transitive graph is either $0$, $1$, $2$, or~$\infty$ (and that a graph is finite if and only if it has $0$~ends). Therefore, the above results settle all cases where the number of ends is finite.

S.\,Legge found the first example of a vertex-transitive graph with infinitely many ends that has a unique hamiltonian circle. In fact, he discovered an infinite family of examples. We will now provide an informal, geometric description of these graphs, but a formal construction (and a rigorous proof that they are uniquely hamiltonian) can be found in \cref{LeggeSect}.

\begin{example}[S.\,Legge {[personal communication]}] \label{LeggeEg}
Fix $m \ge 3$, and start with the natural planar embedding of a regular tree~$T_m$ of degree~$m$. (This is also known as a \emph{Bethe lattice}.) Construct an associated graph~$X_m$ that is embedded in the plane, by replacing each vertex of~$T_m$ with an $m$-cycle, and replacing each edge of~$T_m$ with two parallel edges. (More precisely, replace each vertex~$v$ of~$T_m$ with an $m$-cycle~$C_v$ that has an edge facing each vertex adjacent to~$v$. Now, for each pair $v,w$ of adjacent vertices of~$T_m$, we add two edges to~$X_m$, so that the two edges of $C_v$ and~$C_w$ that face each other are contained in a $4$-cycle.) For example, \cref{T3X3} has pictures of the tree~$T_3$ and the corresponding graph~$X_3$.
It is clear that $X_m$ is vertex-transitive and has infinitely many ends (because it was constructed from~$T_m$, which is arc-transitive and has infinitely many ends.) 

    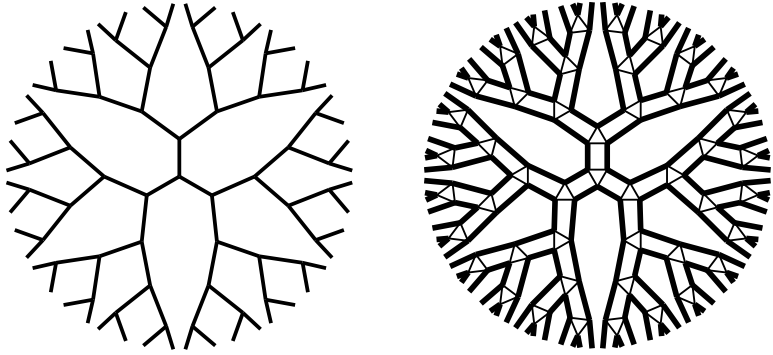
\begin{figure}[t]
        \begin{tikzpicture}[scale=0.5]
        \foreach \i in {0,1,2}{ 
        		\coordinate (A) at ({cos(\i*120 - 30)}, {sin(\i*120 - 30)});
        		\draw[line width=0.5mm]  (0,0)--(A);
		 \foreach \j in {-1,1}{ 
			\coordinate (B) at ({2*cos(\i*120 + \j*30 - 30)}, {2*sin(\i*120 + \j*30 - 30)});
			\draw[line width=0.5mm]  (A)--(B);
			\foreach \k in {-1,1}{ 
				\coordinate (C) at ({3*cos(\i*120 + \j*30 + \k*15 - 30)}, {3*sin(\i*120 + \j*30 + \k*15 - 30)});
				\draw[line width=0.5mm]  (B)--(C);
				\foreach \l in {-1,1}{ 
					\coordinate (D) at ({4*cos(\i*120 + \j*30 + \k*15 + \l*10 - 30)}, 
						{4*sin(\i*120 + \j*30 + \k*15 + \l*10 - 30)});
					\draw[line width=0.5mm]  (C)--(D);
					\foreach \m in {-1,1}{ 
						\coordinate (E) at ({4.6*cos(\i*120 + \j*30 + \k*15 + \l*8 + \m*5 - 30)}, 
							{4.6*sin(\i*120 + \j*30 + \k*15 + \l*8 + \m*5 - 30)});
						\draw[line width=0.5mm]  (D)--(E);
						}
					}
				}
		 	}
		}
        \end{tikzpicture}
        \qquad
         \begin{tikzpicture}[scale=0.5]
         \tikzmath{
        		function Ax(\x){
			return 0.3*cos(240 - 30);
			};
        		function Ay(\x){
			return 0.3*sin(240 - 30);
			};
        		function Bx(\x){
			return 0.3*cos(0 - 30);
			};
        		function By(\x){
			return 0.3*sin(0 - 30);
			};
        		function Cx(\x){
			return 0.3*cos(120-30);
			};
        		function Cy(\x){
			return 0.3*sin(120-30);
			};
		function Px(\r,\t) {
			return Ax() * cos(\t) + (Ay() + \r)*sin(\t);
			};
		function Py(\r,\t) {
			return -Ax() *sin(\t) + (Ay() + \r)*cos(\t);
			};
		function Qx(\r,\t) { 
			return Bx() * cos(\t) + (By() + \r)*sin(\t);
			};
		function Qy(\r,\t) {
			return -Bx()*sin(\t) + (By() + \r)*cos(\t);
			};
		function Rx(\r,\t) {
			return Cx() * cos(\t) + (Cy() + \r)*sin(\t));
			};
		function Ry(\r,\t) {
			return -Cx() *sin(\t) + (Cy() + \r)*cos(\t);
			};
	};

	\def\a{0}
	\def\b{0}
   	\draw[line width=0.25mm] ({-Px(\a,\b)},{-Py(\a,\b)})--({-Qx(\a,\b)},{-Qy(\a,\b)})--({-Rx(\a,\b)},{-Ry(\a,\b)})--cycle;
	 \foreach \i in {0,1,2}{ 
		\def\a{1}
		\def\b{\i*120}
   		\draw[line width=0.25mm] ({Px(\a,\b)},{Py(\a,\b)})--({Qx(\a,\b)},{Qy(\a,\b)})--({Rx(\a,\b)},{Ry(\a,\b)})--cycle;
   		\draw[line width=0.75mm] ({Px(\a,\b)},{Py(\a,\b)})--({-Qx(0,\b)},{-Qy(0,\b)});
   		\draw[line width=0.75mm] ({Qx(\a,\b)},{Qy(\a,\b)})--({-Px(0,\b)},{-Py(0,\b)});
		\foreach \j in {-1,1}{ 
			\def\c{2}
			\def\d{\i*120 + \j*30}
   			\draw[line width=0.25mm] ({Px(\c,\d)},{Py(\c,\d)})--({Qx(\c,\d)},{Qy(\c,\d)})--({Rx(\c,\d)},{Ry(\c,\d)})--cycle;
			\ifnum\j=1
				{\draw[line width=0.75mm] ({Qx(2,\d)},{Qy(2,\d)})--({Qx(1,\b)},{Qy(1,\b)});}
				{\draw[line width=0.75mm] ({Px(2,\d)},{Py(2,\d)})--({Rx(1,\b)},{Ry(1,\b)});}
			\else
				{\draw[line width=0.75mm] ({Qx(2,\d)},{Qy(2,\d)})--({Rx(1,\b)},{Ry(1,\b)});}
				{\draw[line width=0.75mm] ({Px(2,\d)},{Py(2,\d)})--({Px(1,\b)},{Py(1,\b)});}
			\fi;
			\foreach \k in {-1,1}{ 
				\def\e{3}
				\def\f{\i*120 + \j*30 + \k*15}
				\draw[line width=0.25mm] ({Px(\e,\f)},{Py(\e,\f)})--({Qx(\e,\f)},{Qy(\e,\f)})--({Rx(\e,\f)},{Ry(\e,\f)})--cycle;
				\ifnum\k=1
					{\draw[line width=0.75mm] ({Qx(3,\f)},{Qy(3,\f)})--({Qx(2,\d)},{Qy(2,\d)});}
					{\draw[line width=0.75mm] ({Px(3,\f)},{Py(3,\f)})--({Rx(2,\d)},{Ry(2,\d)});}
				\else
					{\draw[line width=0.75mm] ({Qx(3,\f)},{Qy(3,\f)})--({Rx(2,\d)},{Ry(2,\d)});}
					{\draw[line width=0.75mm] ({Px(3,\f)},{Py(3,\f)})--({Px(2,\d)},{Py(2,\d)});}
				\fi;
				\foreach \l in {-1,1}{ 
					\def\g{4}
					\def\h{\i*120 + \j*30 + \k*15 + \l*8}
					\draw[line width=0.25mm] ({Px(\g,\h)},{Py(\g,\h)})--({Qx(\g,\h)},{Qy(\g,\h)})--({Rx(\g,\h)},{Ry(\g,\h)})--cycle;
					\ifnum\l=1
						{\draw[line width=0.75mm] ({Qx(4,\h)},{Qy(4,\h)})--({Qx(3,\f)},{Qy(3,\f)});}
						{\draw[line width=0.75mm] ({Px(4,\h)},{Py(4,\h)})--({Rx(3,\f)},{Ry(3,\f)});}
					\else
						{\draw[line width=0.75mm] ({Qx(4,\h)},{Qy(4,\h)})--({Rx(3,\f)},{Ry(3,\f)});}
						{\draw[line width=0.75mm] ({Px(4,\h)},{Py(4,\h)})--({Px(3,\f)},{Py(3,\f)});}
					\fi;
					\foreach \m in {-1,1}{ 
						\def\p{5}
						\def\q{\i*120 + \j*30 + \k*15 + \l*8 + \m*2}
						\ifnum\m=1
							{\draw[line width=0.75mm] ({Qx(4.75,\q)},{Qy(4.75,\q)})--({Qx(4,\h)},{Qy(4,\h)});}
							{\draw[line width=0.75mm] ({Px(4.75,\q)},{Py(4.75,\q)})--({Rx(4,\h)},{Ry(4,\h)});}
						\else
							{\draw[line width=0.75mm] ({Qx(4.75,\q)},{Qy(4.75,\q)})--({Rx(4,\h)},{Ry(4,\h)});}
							{\draw[line width=0.75mm] ({Px(4.75,\q)},{Py(4.75,\q)})--({Px(4,\h)},{Py(4,\h)});}
						\fi;
						}
					}
				}
			}
		}
	     \end{tikzpicture} 
    \caption{The tree $T_3$ and Legge's associated graph~$X_3$. The dark edges in $X_3$ are the unique hamiltonian circle.}
        \label{T3X3}
    \end{figure} 

Removing the edges of the $m$-cycles results in a $2$-factor that is easily seen to be a hamiltonian circle in~$X_m$. 
Furthermore, this hamiltonian circle is unique. To see this, note that removing any pair of edges of~$X_m$ that both correspond to the same edge in~$T_m$ will disconnect the graph. Speaking informally, any hamiltonian circle needs to reach \emph{and return from} every end, so it must contain both of these edges (see \cref{2factor}). Hence, every hamiltonian circle contains (and is therefore equal to) the hamiltonian circle described above.
\end{example}

Recall that a finite graph is \emph{outerplanar} if it (is 2-connected and) has a planar embedding, such that all of the vertices lie on the boundary of a single face.
This has the following natural generalization to infinite graphs:

\begin{definition}[cf.\ {\cite[Thm.~1.8]{Heuer}}] \label{OuterPlanarDef}
A graph~$X$ is \emph{outerplanar} if there is an embedding of~$|X|$ into a closed disk, such that some hamiltonian circle of~$|X|$ is mapped onto the boundary of the disk.
\end{definition}

From the perspective of this paper, it is important to note that K.\,Heuer proved every outerplanar graph has a unique hamiltonian circle (see \fullcref{HeuerOuterplanar}{UHC}). The following fact is fairly obvious:

\begin{observation} \label{LeggeOuter}
Legge's examples are outerplanar.
\end{observation}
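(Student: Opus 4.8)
The plan is to reinterpret the natural planar embedding of $X_m$ from \cref{LeggeEg} as an outerplanar embedding: I will produce a closed disk, drawn inside the surface carrying $X_m$, whose boundary is precisely the (unique) hamiltonian circle $C=\overline F$, where $F$ is the $2$-factor obtained by deleting the edges of all the $m$-cycles $C_v$. The first step is to record that the natural drawing extends to a topological embedding $\iota\colon|X_m|\to S^2$. This is transparent from the Bethe-lattice picture: the standard planar drawing of $T_m$ lies in a closed disk~$D$, with the ends of $T_m$ forming a Cantor set on $\partial D$ (the complementary arcs of $\partial D$ correspond to the ``gaps'' between consecutive branches of the tree), and the passage from $T_m$ to $X_m$ --- replacing each vertex by a small $m$-gon and each edge by a $4$-cycle --- can be carried out within $D$ without disturbing the ends, so $|X_m|$ too embeds in $D\subseteq S^2$ with its ends on $\partial D$. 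Consequently $\iota$ carries $C=\overline F$ to a Jordan curve in~$S^2$, so, by the Jordan--Schoenflies theorem, $S^2\sm\iota(C)$ has exactly two components $U_1$ and $U_2$, each an open disk with $\partial U_i=\iota(C)$, and each closure $\overline{U_i}$ is a closed disk bounded by $\iota(C)$.

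It then remains to check that $\iota(|X_m|)$ lies in one of these two closed disks. The only edges of $X_m$ that are not contained in $C$ are the edges of the $m$-cycles $C_v$, while all vertices and all ends of $X_m$ lie on $C$; hence $\iota(|X_m|)$ is the union of $\iota(C)$ with the images of the (closed) $m$-cycle edges, and it is enough to show that the open $m$-cycle edges all lie in a single component of $S^2\sm\iota(C)$. For this, let $W$ be the union of the open face bounded by each $m$-cycle $C_v$, the open face bounded by each $4$-cycle of $X_m$ (there is one for every edge of $T_m$), and the open $m$-cycle edges. Every $m$-cycle edge lies on the boundary both of the face of the cycle $C_v$ that contains it and of the $4$-cycle through it, and each such $4$-cycle has the relevant edge of $C_v$ and the relevant edge of the neighbouring $C_w$ as an opposite pair of sides, its other two sides being edges of~$F$. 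Crossing these shared edges shows that $W$ is connected, and since $T_m$ is connected every $m$-cycle edge is reached, so they all belong to $W$. Moreover $W$ is disjoint from $\iota(C)$, because it meets neither the edges of $F$ nor the ends of $X_m$. Hence $W$ lies in a single component, say $U_1$, of $S^2\sm\iota(C)$, and therefore $\iota(|X_m|)\subseteq\iota(C)\cup U_1=\overline{U_1}$. Restricting $\iota$ to $|X_m|$ now gives an embedding of $|X_m|$ into the closed disk $\overline{U_1}$ that sends the hamiltonian circle $C$ onto $\partial\overline{U_1}$, which is exactly the condition required by \cref{OuterPlanarDef}.

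The combinatorial bookkeeping --- the connectivity of $W$ and the fact that it avoids $C$ --- is routine. The one point that really needs care is the first step: one must genuinely verify that the natural drawing extends to a homeomorphic copy of $|X_m|$ in~$S^2$, so that $C$ becomes an honest Jordan curve with a well-defined inside and outside, rather than taking this for granted. This is the only place where the infinitude of ends enters, and it is clear for Legge's graphs precisely because of the explicit Bethe-lattice description.
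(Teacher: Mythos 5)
Your argument is correct, and it is essentially the paper's (unwritten) argument made precise: the paper simply declares the observation ``fairly obvious'' from the natural planar drawing of $X_m$ in \cref{T3X3}, and you formalize exactly that picture --- embedding $|X_m|$ in a disk with the ends on a Cantor set, invoking Jordan--Schoenflies for the circle $C$, and using the chain of $m$-gon and $4$-gon faces to push all non-$C$ edges to one side. You also correctly identify the only step needing genuine care, namely that the drawing extends to an embedding of the Freudenthal compactification $|X_m|$ into $S^2$.
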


Our remaining results deal with a certain very natural class of vertex-transitive graphs with infinitely many ends: Cayley graphs $\cay(F_n; S)$ of a free group~$F_n$ (with $n \ge 2$). (See \cref{CayleyDefn} for the definition of \emph{Cayley graph}.) So we fix the following notation:

\begin{notation}
Let $A = \{a_1, a_2, \ldots, a_n\}$ be the standard generating set of the free group~$F_n$ \textup(with $n \ge 2$\textup).
\end{notation}

First, an elementary observation:

\begin{lem}[see \cref{UniqueInCayley}]
If\/ $\cay(F_n; S)$ has a unique hamiltonian circle~$C$, then $C = \cay(F_n; s^{\pm1})$, for some $s \in S$.
\end{lem}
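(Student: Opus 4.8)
The plan is to play the uniqueness of~$C$ off against the large automorphism group of a Cayley graph; in fact the left translations alone will suffice. Write $X = \cay(F_n; S)$. First I would recall that every graph automorphism of~$X$ extends to a homeomorphism of the Freudenthal compactification~$|X|$ (it permutes the ends of~$X$), and hence maps each hamiltonian circle of~$X$ onto a hamiltonian circle of~$X$. Since~$C$ is by hypothesis the \emph{only} hamiltonian circle, this forces $\varphi(C) = C$ for every $\varphi \in \autt(X)$. For each $g \in F_n$, the left translation $\lambda_g \colon x \mapsto gx$ is such an automorphism, so the $2$-factor~$C$ is invariant under all left translations: for all $g,h,\ell \in F_n$ we have $g \sim_C h$ if and only if $\ell g \sim_C \ell h$.

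Next I would read off that~$C$ is itself a Cayley graph on~$F_n$. Let $T := \{\, t \in F_n : e \sim_C t \,\}$ be the set of $C$-neighbours of the identity. Applying the translation $\lambda_g$ gives $g \sim_C gt$ for every $g \in F_n$ and $t \in T$; conversely, if $g \sim_C h$ then applying $\lambda_{g^{-1}}$ gives $e \sim_C g^{-1}h$, i.e.\ $g^{-1}h \in T$. Thus $C = \cay(F_n; T)$. Here $T \subseteq S$ since~$C$ is a subgraph of~$X$, and $T = T^{-1}$ since~$C$ is undirected (apply $\lambda_{t^{-1}}$ to the edge joining~$e$ to~$t$). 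Because~$C$ is a $2$-factor it is $2$-regular, and therefore $|T| = 2$.

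Finally I would pin down the two elements of~$T$, and this is the only place the freeness hypothesis enters: since $F_n$ is free it is torsion-free, so~$T$ contains no involution, and hence its two elements are inverse to one another. That is, $T = \{s, s^{-1}\}$ with $s \ne s^{-1}$ and $s \in S$, so $C = \cay(F_n; T) = \cay(F_n; s^{\pm1})$, as desired.

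There is no serious obstacle here --- the statement is elementary --- but one should be careful not to be thrown off by the observation that $\cay(F_n; s^{\pm1})$ is a \emph{disconnected} graph when $n \ge 2$: it is a disjoint union of copies of the two-way-infinite path~$P_\infty$, one for each left coset of~$\langle s\rangle$. This does not contradict its being (the $2$-factor of) a hamiltonian circle, since adjoining the ends can still glue these two-way-infinite paths into a single topological circle in~$|X|$, exactly as the two paths of \cref{hamin2ended} are glued together at $+\infty$ and~$-\infty$. Correspondingly, the statement above does not claim that $\cay(F_n; s^{\pm1})$ is always a hamiltonian circle --- only that the unique one, when it exists, must take this form.
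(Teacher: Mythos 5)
Your proof is correct and follows essentially the same route as the paper: the paper's \cref{UniqueInCayley} derives invariance of~$C$ under left translations from uniqueness, concludes $C = \cay(G;\{a,b\})$ where $a,b$ are the $C$-neighbours of the identity, and then splits into the cases $b=a^{-1}$ versus $|a|=|b|=2$; specializing to~$F_n$, torsion-freeness rules out the second case exactly as you argue. Your closing remark about the disconnectedness of $\cay(F_n;s^{\pm1})$ as an abstract graph is a sensible sanity check but not needed for the argument.
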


Since $\cay(F_n; A^{\pm1})$ is a tree, it is easy to see (and well known) that it does not have a hamiltonian circle. So it is natural to ask whether adding a single additional element~$s$ can result in a hamiltonian circle:

\begin{problem}\label{QF_n}
Find all $s\in F_n$, such that $\cay(F_n; s^{\pm 1})$ is a hamiltonian circle in $\cay(F_n; A \cup \{s^{\pm 1}\})$.
\end{problem}

\begin{remark}[see \cref{ChangeS}]
It is not difficult to see that if $\cay(F_n; s^{\pm 1})$ is a hamiltonian circle in $\cay(F_n; A \cup \{s^{\pm 1}\})$, then it is also a hamiltonian circle in $\cay(F_n; S)$, for every symmetric generating set~$S$ of~$F_n$ that contains~$s$.
\end{remark}

We will prove that the solution set of \cref{QF_n} is not the empty set, and A.\,Georgakopoulos has shown that the hamiltonian circles we construct are unique (see \cref{uniqueFn}):

\begin{restatable}{thm}{thirdmain}\label{FreeUHCEg}
In each of the following cases, $\cay(F_n; s^{\pm1})$ is the unique hamiltonian circle in $\cay \bigl( F_n; A^{\pm1} \cup \{s^{\pm1}\} \bigr)$\textup:
    \begin{enumerate}
    \item \label{FreeUHCEg-squares}
    $s = a_1^2 \, a_2^2 \cdots a_n^2$,
    or
    \item \label{FreeUHCEg-commutators}
    $n$ is even and $s = [a_1, a_2] [a_3, a_4] \cdots [a_{n-1}, a_n]$.
    \end{enumerate}
\end{restatable}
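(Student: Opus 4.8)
The plan is to prove the two assertions separately: that $C := \cay(F_n; s^{\pm 1})$ is a hamiltonian circle in $X := \cay\bigl(F_n; A^{\pm1}\cup\{s^{\pm1}\}\bigr)$, and that it is the only one. For uniqueness I would invoke \cref{uniqueFn} (or the outerplanarity obtained below together with \fullcref{HeuerOuterplanar}{UHC}); so the substantive point is \emph{existence}: that $C$ is a hamiltonian circle at all.

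First I would record the easy structural facts. Since $F_n$ is torsion-free, $\langle s\rangle\cong\Z$, so $C$ is the disjoint union of the two-way-infinite paths $R_{g\langle s\rangle} = (\ldots, gs^{-1}, g, gs, gs^2,\ldots)$, one for each left coset of $\langle s\rangle$, and these cover $V(X)$; thus $C$ is a spanning $2$-regular subgraph of $X$. Because $s^k$ is reduced as written (length $2n|k|$ over $A^{\pm1}$ in both cases, $s$ being cyclically reduced), one gets $d_X(g,gs^k)=|k|$, so each $R_{g\langle s\rangle}$ is a geodesic double ray and hence converges to two distinct ends of $X$. By the abuse-of-terminology convention, proving that $C$ is a hamiltonian circle now means proving that its closure $\overline C$ in $|X|$ is homeomorphic to $S^1$.

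To do that I would construct an embedding of $|X|$ into a closed disk $\mathbb D$ that carries $\overline C$ onto $\partial\mathbb D$; this is precisely the statement that $\overline C$ is a hamiltonian circle, and it simultaneously yields the outerplanarity of $X$. The embedding is forced by the shape of $s$: for $s=a_1^2a_2^2\cdots a_n^2$ (resp.\ $s=[a_1,a_2]\cdots[a_{n-1},a_n]$), $s$ is a defining relator of a closed surface group, and the $2n$-step geodesic from $g$ to $gs$ in $T:=\cay(F_n;A^{\pm1})$ together with the $s$-edge $\{g,gs\}$ bounds a polygon; the relator identities are exactly what is needed to lay these polygons flat around the planar tree $T$ so that consecutive polygons share their $T$-edges while each contributes only its $s$-edge to the outer boundary. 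One then has to check two things: (i) the cyclic arrangement of the $2n+2$ generators around each vertex that this prescribes is the same at every vertex, so that the rotation system is well defined and planar; and (ii) the boundary walk of the resulting disk runs through every vertex exactly once, using precisely the $s^{\pm1}$-edges, so that it is $C$. Both reduce to short local computations, and this is where cases (1) and (2) are treated separately.

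The hard part will be the behaviour of $\overline C$ at the ends of $X$. A basic neighbourhood of an end in $|X|$ contains an entire infinitely-branching component of $X$ minus a finite set, so one must show that inside such a component the various finite pieces of the double rays $R_{g\langle s\rangle}$ nest and glue along ends into a single arc — and that no two pieces of $C$ that ought to be disjoint accidentally coincide, so that $\overline C$ is an \emph{embedded} circle and not a more degenerate closed curve. This "nesting at infinity" is what the surface rotation system delivers, but checking it rigorously (rather than just drawing pictures, as in \cref{T3X3} for Legge's graphs) is the delicate step. With $\overline C$ established as a hamiltonian circle, and $X$ shown to be outerplanar, uniqueness follows at once from \cref{uniqueFn}.
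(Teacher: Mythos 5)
Your uniqueness step matches the paper's (invoke \cref{uniqueFn}), and your ``local computation'' --- the cyclic arrangement of $A^{\pm1}\cup\{1\}$ around a vertex forced by $s$ --- is essentially the same finite check the paper performs, namely that the quotient $X_1^s=\cay(F_n;s^{\pm1})/{\lsim{1}}$ is a cycle. But the global part of your existence argument has a genuine gap. You propose to prove that $C$ is a hamiltonian circle by exhibiting an embedding of $|X|$ into a closed disk carrying $\overline{C}$ onto the boundary, and you then explicitly defer the only hard point --- that the pieces of $C$ ``nest and glue along ends into a single arc'' and that $\overline{C}$ is an embedded circle rather than a degenerate closed curve --- calling it ``the delicate step'' without carrying it out. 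That step \emph{is} the theorem: a consistent rotation system gives a planar embedding of the graph $X$, but it does not by itself yield an embedding of the Freudenthal compactification into a disk with $\overline{C}$ as boundary, and controlling the closure of $C$ at the uncountably many ends is exactly where all the work lies. Nothing in your outline does it.

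The paper sidesteps the topology entirely by using the combinatorial characterization of hamiltonian circles (\cref{2factor}: a $2$-factor is a hamiltonian circle iff it meets every finite edge cut in a positive even number of edges, and any two of its edges are cut off by some finite edge cut), verified through the finite quotients $\cay(F_n;s^{\pm1})/{\lsim{\ell}}$. \Cref{CycleToHamCircle} shows by induction on $\ell$ that if the level-$1$ quotient is a cycle then every level-$\ell$ quotient is a cycle, and \cref{FnCut} translates this back into the edge-cut conditions on $X$; the proof of the theorem then consists only of writing down the cycle $1\to a_1\to a_1^{-1}\to a_2\to a_2^{-1}\to\cdots\to a_n\to a_n^{-1}\to 1$ (and its analogue for the commutator word). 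If you want to keep your surface-relator/outerplanarity picture, the paper's \cref{MoreFnOuterplanar} shows how to make it rigorous: prove outerplanarity of each finite quotient $X/{\lsim{\ell}}$ by induction and pass to the limit via Heuer's minor characterization (\fullcref{HeuerOuterplanar}{iff}), rather than building the disk embedding of $|X|$ in one step. Note also that in the paper outerplanarity is a consequence of, not a substitute for, the quotient-graph induction, and it is proved \emph{after} the hamiltonian circle is already in hand.
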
 

Legge's example \pref{LeggeEg} has the great virtue that the hamiltonian circle is easily visualized. Unfortunately, the hamiltonian circles in \cref{FreeUHCEg} seem to be much less intuitive. However, the following result suggests that it may be possible to draw reasonable pictures of them.

\begin{thm}[see \cref{MoreFnOuterplanar}] \label{FnOuterplanar}
$\cay \bigl( F_n; A^{\pm1} \cup \{s^{\pm1}\} \bigr)$ is outerplanar if $s$ and~$n$ are as in \fullref{FreeUHCEg}{squares} or \fullref{FreeUHCEg}{commutators}.
\end{thm}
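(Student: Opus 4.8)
The plan is to realize $X:=\cay\bigl(F_n;A^{\pm1}\cup\{s^{\pm1}\}\bigr)$ as the $1$-skeleton of the universal cover of a suitable compact surface-with-boundary, and to read the disk-embedding required by \cref{OuterPlanarDef} directly off that picture. For \fullref{FreeUHCEg}{squares}, let $S$ be the complement of an open disk in the connected sum of $n$ copies of the projective plane; for \fullref{FreeUHCEg}{commutators} (so $n$ is even), let $S$ be the complement of an open disk in the closed orientable surface of genus $n/2$. In either case $S$ is compact with a single boundary circle, $\pi_1(S)$ is free of rank~$n$, and — after relabelling the standard generators as $a_1,\dots,a_n$ — the loop $\partial S$ represents the conjugacy class of~$s$; moreover $\chi(S)=1-n<0$. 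Equip $S$ with the standard cell structure: one vertex $v_0$, one $1$-cell for each~$a_i$, one further $1$-cell $\beta$ traversing~$\partial S$, and a single $(2n{+}1)$-gon $2$-cell whose boundary word is $s\beta^{-1}$ (the attaching map then simply sets $\beta=s$, eliminating it). Lifting this cell structure to the universal cover~$\widetilde S$: the vertices are indexed by~$F_n$, the $a_i$-edges run from $g$ to $ga_i$, and the $\beta$-edges run from $g$ to $gs$. Hence the $1$-skeleton of $\widetilde S$ is exactly~$X$, with the $a_i$-edges forming the tree $\cay(F_n;A^{\pm1})$ and the $\beta$-edges forming the $2$-factor $\cay(F_n;s^{\pm1})$, which by \cref{FreeUHCEg} is the hamiltonian circle~$C$.

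The topology is then forced. Since $S$ has nonempty boundary it is aspherical, so $\widetilde S$ is contractible; and $\partial\widetilde S$, being the preimage of the circle $\partial S$ under the universal covering $\widetilde S\to S$, is a disjoint union of lines — one for each coset of~$\langle s\rangle$ — hence (as $\langle s\rangle$ has infinite index in~$F_n$) infinitely many pairwise-disjoint bi-infinite paths, along which the $\beta$-edges of~$X$ lie. It is standard that a contractible surface whose boundary is a nonempty countable disjoint union of lines is homeomorphic to $\overline{D^2}\smallsetminus K$ for some nonempty closed nowhere-dense $K\subseteq\partial D^2$, the boundary lines being the arcs of $\partial D^2$ complementary to~$K$; in particular $\widetilde S\cup K\cong\overline{D^2}$. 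Since $S$ is compact, the inclusion $X\hookrightarrow\widetilde S$ of the $1$-skeleton is a quasi-isometry, so it induces a homeomorphism $\mathrm{Ends}(X)\cong\mathrm{Ends}(\widetilde S)\cong K$ (necessarily a Cantor set, since $\mathrm{Ends}(F_n)$ is), and this extends the inclusion to an embedding of the Freudenthal compactification $|X|$ into $\widetilde S\cup K\cong\overline{D^2}$. Under this embedding the closure of the $2$-factor $\cay(F_n;s^{\pm1})$ — i.e.\ the hamiltonian circle~$C$ — maps onto $\partial\widetilde S\cup K=\partial\overline{D^2}$. Thus $|X|$ embeds in a closed disk with a hamiltonian circle carried onto the boundary, which is precisely what \cref{OuterPlanarDef} asks; the same argument serves for both parts of \cref{FreeUHCEg}.

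I expect the only genuinely delicate point to be the penultimate step of the middle paragraph: verifying that the \emph{combinatorially} defined Freudenthal compactification $|X|$ really does agree with the topological compactification of~$\widetilde S$ obtained by adjoining~$K$ — concretely, that a sequence of vertices of~$X$ converges to an end of~$X$ if and only if, inside $\overline{D^2}$, it converges to the corresponding point of the Cantor set~$K$. Once that is granted there is nothing left to do. A reader who prefers to avoid surface topology can recast the whole argument combinatorially: at each vertex use the rotation system given by the cyclic order in which the $2n{+}1$ edge-slots of the polygon occur, draw the tree $\cay(F_n;A^{\pm1})$ as chords inside the disk bounded by~$C$, and show by induction on the distance in the tree that, for every tree-edge from~$g$ to~$gx$ (with $x\in A^{\pm1}$), the branch of $\cay(F_n;A^{\pm1})$ on the far side of that edge occupies exactly one of the two arcs into which the chord from~$g$ to~$gx$ divides~$C$. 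Because the branches of a tree are pairwise nested or disjoint, the chords produced this way are pairwise non-crossing, which is again the required embedding; and in this formulation it is precisely the inductive arc-nesting step — and nothing else — that exploits the fact that $s$ is a surface relator.
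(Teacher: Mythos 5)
Your argument is correct in outline, but it takes a genuinely different route from the paper's. The paper (in \cref{MoreFnOuterplanar}) never constructs an embedding at all: it invokes Heuer's characterization \fullref{HeuerOuterplanar}{iff} (outerplanar $\Leftrightarrow$ $2$-connected with no $K_4$ or $K_{2,3}$ minor) and then shows by induction on~$\ell$ that each finite quotient $\cay(F_n; A^{\pm1}\cup\{s^{\pm1}\})/{\lsim{\ell}}$ is outerplanar, each vertex expanding to a fan of consecutive vertices on the quotient hamiltonian cycle; since every finite minor of~$X$ is a minor of some quotient, the forbidden minors are excluded. You instead build the disk embedding of \cref{OuterPlanarDef} directly, by recognizing $\cay(F_n; A^{\pm1}\cup\{s^{\pm1}\})$ as the $1$-skeleton of the universal cover of a compact surface with one boundary circle whose boundary word is~$s$ --- and this is exactly why the two words in \cref{FreeUHCEg} work: they are the standard non-orientable and orientable surface relators. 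Your approach buys a conceptual explanation of where these words come from, and as a by-product re-derives (independently of \cref{CycleToHamCircle}) that the $2$-factor $\cay(F_n;s^{\pm1})$ is a hamiltonian circle, since it is precisely $\partial\widetilde S$. What it costs is the surface topology you flag yourself: the assertion that $\widetilde S\cup K\cong\overline{D^2}$ and, more importantly, that the Freudenthal topology on $|X|$ agrees with the subspace topology from $\overline{D^2}$. Both are cleanest if you put a hyperbolic metric with geodesic boundary on~$S$ (possible since $\chi(S)=1-n<0$), so that $\widetilde S$ is the convex hull in $\mathbb{H}^2$ of the limit set $\Lambda$ of the corresponding Fuchsian free group; then $\overline{\widetilde S}=\widetilde S\cup\Lambda$ is a compact convex disk, $\Lambda$ is a Cantor set equal to the Gromov boundary of~$F_n$, which for a free group coincides with its end space, and injectivity plus compactness of $|X|$ upgrade the resulting continuous bijection onto its image to an embedding. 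With that made explicit your proof is complete; the paper's minor-exclusion induction avoids all of this at the price of hiding the geometric picture, and your closing combinatorial rotation-system reformulation is essentially a hands-on version of the same induction.
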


\begin{question}
Is there a vertex-transitive graph that has a unique hamiltonian circle, but is not outerplanar?
\end{question}

The following question suggests a possible answer to \cref{QF_n}:

\begin{question}
Is the converse of \cref{FreeUHCEg} true, up to applying an automorphism of~$F_n$ to~$s$? What if we remove the assumption that the hamiltonian circle is unique?
\end{question}

We establish a positive answer to this question in two special cases, the second 
of which gives a complete solution to \cref{QF_n} when $n = 2$:

\begin{restatable}{thm}{forthmain}\label{F_2}
Let $s$ be an element of a symmetric, finite generating set~$S$ of~$F_n$ \textup(with $n \ge 2$\textup), and assume that either
    \begin{enumerate} 
    \renewcommand{\theenumi}{\alph{enumi}}
    \item \label{F_2-degree}
    $\#_a^\pm(s) = 2$ for all $a \in A$,
    or
    \item \label{F_2-n=2}
    $n = 2$.
    \end{enumerate}
The $2$-factor\/ $\cay(F_n; s^{\pm1})$ is a hamiltonian circle in $\cay(F_n; S)$ if and only if some automorphism of~$F_n$ carries $s$ to one of the two words listed in the statement of \cref{FreeUHCEg}.
\end{restatable}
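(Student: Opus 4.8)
\textbf{The ``if'' direction} needs neither hypothesis~(a) nor~(b) and is formal. If $\phi \in \autt(F_n)$ carries $s$ to one of the two words~$w$ listed in \cref{FreeUHCEg}, then $\phi$ carries the symmetric finite generating set~$S$ to another one, $\phi(S)$, and induces a graph isomorphism $\cay(F_n;S)\to\cay(F_n;\phi(S))$ that extends to a homeomorphism of Freudenthal compactifications and maps the spanning $2$-factor $\cay(F_n;s^{\pm1})$ onto $\cay(F_n;w^{\pm1})$. Since $w\in\phi(S)$, \cref{FreeUHCEg} together with \cref{ChangeS} shows that $\cay(F_n;w^{\pm1})$ is a hamiltonian circle in $\cay(F_n;\phi(S))$, and transporting the isomorphism back gives the same conclusion for $\cay(F_n;s^{\pm1})$ in $\cay(F_n;S)$.

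\textbf{Reductions for the ``only if'' direction.} Assume $\cay(F_n;s^{\pm1})$ is a hamiltonian circle in $\cay(F_n;S)$. Since this property is independent of the choice of symmetric finite generating set containing~$s$ (\cref{ChangeS}), we may take $S = A^{\pm1}\cup\{s^{\pm1}\}$; conjugating~$s$ by an element of~$F_n$ (an inner automorphism) lets us assume $s$ is cyclically reduced, and replacing $s$ by~$s^{-1}$ is harmless. Write $\mathcal T = \cay(F_n;A^{\pm1})$ for the underlying regular tree of $X := \cay(F_n;A^{\pm1}\cup\{s^{\pm1}\})$. Each edge $e$ of~$\mathcal T$ determines a finite edge cut of~$X$, namely $e$ together with the $s$-edges joining the two (infinite) sides of~$e$, and by translation-invariance the number of those $s$-edges is $\#_{a_i}^{\pm}(s)$ whenever $e$ is an $a_i$-edge. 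A hamiltonian circle contains no $A$-edge and, by \cref{2factor}, must cross this cut a positive even number of times; hence $\#_{a_i}^{\pm}(s)$ is even and positive for every~$i$. Under hypothesis~(a) we are given $\#_{a_i}^{\pm}(s)=2$. Under hypothesis~(b), where $n=2$, one must still rule out $\#_{a_i}^{\pm}(s)\ge 4$: \emph{this is the step I expect to be the main obstacle}, and I would attack it by a finer analysis of the nested family of $\mathcal T$-edge cuts contained in one side of a fixed $a_i$-edge --- equivalently, of how the two strands of the circle enter and leave that side --- to force each generator to occur exactly twice. In either case we reach the same situation: $s$ is cyclically reduced, $|s|=2n$, and each generator of~$A$ occurs exactly twice in~$s$.

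\textbf{Classification in the reduced situation.} Such a word~$s$ determines a closed surface $\Sigma_s$ by gluing in pairs the equally-labelled sides of a $2n$-gon with boundary word~$s$; writing $V_s$ for the number of vertex-classes of this gluing, one has $\chi(\Sigma_s)=V_s-n+1$ and $F_n/\langle\langle s\rangle\rangle\cong\pi_1(\Sigma_s)\ast F_{V_s-1}$. The remaining structural point is to show, via \cref{2factor}, that the hamiltonian-circle hypothesis forces $V_s=1$: the natural approach is to prove that the closure of $\cay(F_n;s^{\pm1})$ in~$|X|$ is connected if and only if the $2n$-gon has a single vertex-class, so that a single (hamiltonian) circle is possible exactly when $V_s=1$. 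Granting this, $\chi(\Sigma_s)=2-n$, so by the classification of surfaces $\Sigma_s$ is the non-orientable surface $N_n$ (when some generator occurs with the same sign at both of its positions along~$s$) or, when $n$ is even, the orientable surface of genus~$n/2$. Finally, the classical fact that $\autt(F_n)$ acts transitively on the cyclically reduced words giving a rank-$n$ closed-surface presentation (Zieschang, via Nielsen's methods) produces an automorphism of~$F_n$ carrying~$s$ to $a_1^2 a_2^2\cdots a_n^2$ or to $[a_1,a_2]\cdots[a_{n-1},a_n]$ accordingly.

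\textbf{Where the work lies.} Besides the part-(b) multiplicity step flagged above, the other place I expect to do genuine work is translating the topological criterion of \cref{2factor} into the clean combinatorial statement that connectedness of the closure of $\cay(F_n;s^{\pm1})$ is equivalent to the $2n$-gon of~$s$ having one vertex; the ``if'' direction and the invocation of surface rigidity are then routine.
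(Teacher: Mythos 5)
Your ``if'' direction and the first reductions are fine, and your endgame --- identifying a cyclically reduced word in which every generator occurs exactly twice with a one-vertex $2n$-gon, then invoking the classification of surfaces and the transitivity of $\autt(F_n)$ on such one-vertex surface words --- is a genuinely different (and, with a careful citation, legitimate) replacement for the paper's explicit inductions (\cref{CommutatorConverse}, \cref{g'a^2}, and the induction in the proof of \fullref{F_2}{degree}); the paper's route produces the automorphism by hand, yours outsources it to classical topology. The connectivity step you flag (your $V_s=1$) is genuinely needed but is the easier of your two admitted gaps: once $\#_a^\pm(s)=2$ for all $a$, the graph $X_1^s$ of \cref{X1s} is $2$-regular, so a second cycle component would avoid the vertex $[1]_1$ and would therefore be separated from the rest of $\cay \bigl( F_n; A^{\pm1}\cup\{s^{\pm1}\} \bigr)/{\lsim{1}}$ by an edge cut containing no $s$-edges, contradicting \fullcref{2factor}{even} via \cref{FnCut}.

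The genuine gap is your treatment of hypothesis~(\ref{F_2-n=2}). The step you propose --- ``rule out $\#_{a_i}^\pm(s)\ge 4$'' for a cyclically reduced $s$ --- is not merely unproven; it is false as stated. The automorphism $a\mapsto ab$, $b\mapsto b$ of $F_2$ sends $a^2b^2$ to the cyclically reduced word $abab^3$, which by \cref{ChangeS} also yields a hamiltonian circle, yet has $\#_b^\pm=4$. Cyclic reduction is therefore not the right normalization: you must first pass to an \emph{automorphically minimal} representative of~$s$ (of minimal word length over the entire $\autt(F_2)$-orbit) and then show that this minimality, combined with the hamiltonian-circle hypothesis, forces each generator to occur exactly twice. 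This is precisely where the paper does its hardest work: \cref{BlocksAreConnected}, whose proof repeatedly uses length-minimality to exclude configurations such as ``every occurrence of $a$ is preceded by $b^{-1}$,'' shows that each $\lsim{\ell-1}$-class expands to a connected subgraph of $\cay(F_2;s^{\pm1})/{\lsim{\ell}}$; an Euler-tour argument then shows that if $X_1^s$ were not a cycle, one could delete two of its edges while keeping every quotient connected, violating \fullcref{2factor}{2edges}. Your proposed ``finer analysis of nested tree cuts'' would have to reproduce something of this strength, and nothing in the proposal indicates how; as written, part~(\ref{F_2-n=2}) is not proved.
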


It is well known that the cube of every connected, finite graph has a hamiltonian cycle. By using hamiltonian circles in the place of hamiltonian cycles, this was extended by A.\,Georgakopoulos \cite[Thm.~5]{MR2483226} to all locally finite connected graphs. This implies that if $S_0$ is any finite generating set of~$G$, then $\cay(G; S_0^{\pm1} \cup S_0^{\pm2}\cup S_0^{\pm3})$ always has a hamiltonian circle. By letting $S_0$ be the standard generating set, this implies there is a symmetric generating set $S$ of $F_n$, such that $\cay(F_n; S)$ has a hamiltonian circle, and $\#S = O(n^3)$. A.\,Georgakopoulos [personal communication] conjectured that $n^3$ can be improved to~$n$. \Cref{FreeUHCEg} establishes an explicit form of this conjecture:

\begin{cor} \label{SmallGenSetFn}
There is a symmetric generating set~$S$ of~$F_n$, such that $\#S = 2n+2$ and $\cay(F_n; S)$ has a hamiltonian circle.
\end{cor}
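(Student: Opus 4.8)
The plan is to deduce this immediately from \fullcref{FreeUHCEg}{squares}, so that essentially the only work is bookkeeping. First I would take $s = a_1^2\,a_2^2\cdots a_n^2$ and set $S = A^{\pm1}\cup\{s^{\pm1}\}$. This $S$ is symmetric by construction, and it generates~$F_n$ because $A\subseteq S$. By \fullcref{FreeUHCEg}{squares}, the $2$-factor $\cay(F_n; s^{\pm1})$ is a hamiltonian circle (in fact, the unique one) in $\cay \bigl( F_n; A^{\pm1}\cup\{s^{\pm1}\}\bigr)=\cay(F_n;S)$. In particular, $\cay(F_n;S)$ has a hamiltonian circle.

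The one point that needs checking is that $\#S = 2n+2$ exactly, i.e., that the $2n+2$ elements $a_1^{\pm1},\ldots,a_n^{\pm1},s,s^{-1}$ are pairwise distinct. The $2n$ elements $a_i^{\pm1}$ are distinct from one another because $A$ freely generates~$F_n$. Since $s$ is a reduced word of length $2n\ge4$, it differs from every $a_i^{\pm1}$ (each of which has length~$1$); and $s\neq s^{-1}$ because a free group is torsion-free, so $s$ is not an involution. Hence all $2n+2$ elements are distinct, which gives $\#S=2n+2$ and completes the proof.

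I do not anticipate any genuine obstacle here: the entire difficulty of this result is already contained in \cref{FreeUHCEg}, and \cref{SmallGenSetFn} merely records the explicit count that it yields. As remarked in the surrounding discussion, this is precisely what is needed to confirm A.\,Georgakopoulos's conjecture that the $O(n^3)$ bound coming from the cube construction can be improved to a generating set of linear size~$2n+2$.
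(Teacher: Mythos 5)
Your proposal is correct and is exactly the paper's intended argument: \cref{SmallGenSetFn} is deduced directly from \fullcref{FreeUHCEg}{squares} by taking $S = A^{\pm1}\cup\{s^{\pm1}\}$ with $s = a_1^2a_2^2\cdots a_n^2$, and your careful check that the $2n+2$ elements are pairwise distinct is a nice (if routine) touch that the paper leaves implicit.
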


\begin{remark}
$2n + 2$ is best possible for $n \ge 2$: it is easy to see that no symmetric set of cardinality less than~$2n$ can generate~$F_n$, and it is well known that a symmetric generating set of cardinality exactly~$2n$ must consist of a free generating set and its inverses \cite{SmallGenSetOfFn}, in which case the corresponding Cayley graph is a tree and therefore obviously does not have a hamiltonian circle.
\end{remark}

It may be of interest to note that \cref{SmallGenSetFn} implies a (weaker) bound for groups that are virtually free:

\begin{restatable}{cor}{VirtFree} \label{VirtFree}
If $G$ is a group with a free subgroup $F_n$ of index~$m$, then $G$ has a symmetric generating set~$S$ of cardinality $2m + 4n + 2$, such that $\cay(G; S)$ has a hamiltonian circle. Furthermore, if  $F_n$ is normal, then $2m$ can be replaced with $2 + 2 \log_2 m$.
\end{restatable}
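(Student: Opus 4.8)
The plan is to build $\cay(G;S)$ out of $m$ translated copies of the graph supplied by \cref{SmallGenSetFn} and to splice their hamiltonian circles into one. Concretely, fix $s$ and $A$ as in \fullref{FreeUHCEg}{squares}, put $S_0 := A^{\pm 1}\cup\{s^{\pm 1}\}$ — so $\#S_0 = 2n+2$ and $C_0 := \cay(F_n; s^{\pm 1})$ is a hamiltonian circle in $\cay(F_n; S_0)$ — and choose left coset representatives $g_1 = e, g_2, \ldots, g_m$ for $F_n$ in $G$. For any $S \supseteq S_0$ the subgraph of $\cay(G; S)$ consisting of the edges labelled by elements of $S_0$ splits as a disjoint union of $m$ copies of $\cay(F_n; S_0)$, the $i$th copy $Y_i$ spanning the coset $g_i F_n$ via $g_i f \leftrightarrow f$, and each $Y_i$ carries a copy $C_i$ of $C_0$; the underlying $2$-factor of $C_i$ is a vertex-disjoint union of double rays (the cosets of $\langle s\rangle$ in $F_n$, translated by $g_i$). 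The task is to enlarge $S_0$ to a generating set $S$ of $G$ so that these $m$ circles merge into a single hamiltonian circle of $\cay(G; S)$.

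The merging move takes two of the circles, say $C_i$ and $C_j$, an edge $uu'$ on a double ray of $C_i$ and an edge $vv'$ on a double ray of $C_j$, deletes $uu'$ and $vv'$, and adds $uv'$ and $u'v$; since $uu'$ and $vv'$ are ordinary edges, deleting $uu'$ from the circle $C_i$ leaves an arc with free endpoints $u, u'$ in the Freudenthal compactification, and re-gluing the two resulting arcs along the two new edges yields a single circle through $V(C_i)\cup V(C_j)$ — the same phenomenon exhibited (informally) by Legge's example, and provable in the spirit of \cref{hamin2ended}. Taking $u = g_i$, $u' = g_i s$, $v' = g_j$, $v = g_j s$ makes the required new edges correspond to the group elements $g_i^{-1} g_j$ and $s^{-1}(g_i^{-1} g_j)s$. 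I would then fix a spanning path through the $m$ cosets, set $t_i := g_i^{-1}g_{i+1}$ for $1 \le i \le m-1$, and let $S$ consist of $S_0$, the $t_i$ together with their inverses, and a further fixed batch of $2n$ conjugates sufficient to realise the second new edge of every splice (the first new edge of the $i$th splice being supplied by $t_i$ itself); performing the $m-1$ splices along the path — always on previously untouched double rays, of which there is an infinite supply — collapses $C_1, \ldots, C_m$ into one hamiltonian circle, and $S$ generates $G$ because the $t_i$ connect every coset to the identity one. A careful count gives $\#S = 2m + 4n + 2$.

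When $F_n$ is normal, $Q := G/F_n$ is a group of order $m$, hence is generated by some $\bar c_1, \ldots, \bar c_k$ with $k \le \log_2 m$; lifting to $c_1, \ldots, c_k \in G$ gives $\langle F_n, c_1, \ldots, c_k\rangle = G$, and normality makes each $c_j$ send an entire coset of $F_n$ onto another coset (the image depends only on the cosets of $g$ and of $c_j$). Consequently a single generator $c_j$ provides compatible new edges for all splices performed along $\bar c_j$-edges at once, so splicing along a spanning tree of $\cay(Q; \{\bar c_j^{\pm 1}\})$ costs only the $c_j^{\pm 1}$ (plus a bounded batch of auxiliary conjugates, absorbed into the $4n+2$), and the term $2m$ in the count is replaced by $2 + 2\log_2 m$.

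The main obstacle is the topological step together with the accompanying bookkeeping: one must show rigorously that each double-ray swap converts two hamiltonian circles into one (rather than into a theta-graph or two disjoint circles) and that the final spliced $2$-factor really has closure an embedded $S^1$ in $|\cay(G; S)|$ — which requires controlling how the ends of the $Y_i$ sit inside $\cay(G; S)$ once the transition edges are added — while simultaneously keeping $\#S$ down to $2m + 4n + 2$. In the non-normal case the transition elements $t_i$ need not behave like quotient generators, so the real danger is that each of the $m-1$ splices forces fresh generators for \emph{both} of its new edges, doubling the $m$-dependent part of the bound; ruling this out calls for a deliberate choice of coset representatives and of the deleted edges so that one new generator per splice suffices and the remaining new edges all come from a single $O(n)$-sized set. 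Checking that $S$ still generates $G$ is then routine.
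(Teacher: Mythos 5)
Your strategy of taking $m$ disjoint translated copies $C_1,\dots,C_m$ of the hamiltonian circle and merging them by finitely many edge swaps cannot work, and the obstacle is exactly the topological one you flag as ``the main obstacle'' --- it is not a technicality to be controlled but a fatal failure. Since $F_n$ has finite index, every coset $g_iF_n$ is coarsely dense in $\cay(G;S)$, so the closure of \emph{each} $C_i$ in $|\cay(G;S)|$ already contains \emph{every} end of $\cay(G;S)$; the $m$ circles are not disjoint arcs that can be re-glued. After your $m-1$ local splices the resulting $2$-factor $D$ still has at least $2m$ disjoint rays converging to each end, whereas an embedded circle meets each point of $|\cay(G;S)|$ in exactly two arcs. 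Concretely, \fullcref{2factor}{2edges} fails for $D$: take $e_1,e_2$ to be the two $C_1$-edges crossing a finite edge cut of $Y_1$ that isolates a deep branch far from all splice sites. A finite edge cut of $\cay(G;S)$ meeting $D$ exactly in $\{e_1,e_2\}$ would have to cut each other coset $g_jF_n$ either trivially (impossible: the infinite branch isolated in coset~$1$ lies at bounded distance from coset~$j$, forcing infinitely many boundary edges) or along a set of $C_j$-edges consisting only of the $\le 2$ edges deleted from $C_j$ by the splices; but those are $s$-edges, and deleting two $s$-edges from $\cay \bigl( F_n; A^{\pm1}\cup\{s^{\pm1}\} \bigr)$ leaves the spanning tree $\cay(F_n;A^{\pm1})$ intact, so they are not an edge cut of $Y_j$. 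No choice of coset representatives or of deleted edges rescues the scheme, and the same problem kills the normal case.

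The paper's proof goes the opposite way: it keeps a single hamiltonian circle $C$ of $\cay(F_n;S_0)$ and blows up each of its vertices into a path visiting all $m$ cosets. With right-coset representatives $r_1,\dots,r_m$ and
$S=\{\, r_{j-1}^{-1}r_j \,\}^{\pm1}\cup\{\, r_m^{-1}sr_1 \mid s\in S_0 \,\}^{\pm1}$
(so $\#S\le 2(m-1)+2(2n+2)=2m+4n+2$), each component $\cdots \edge v_0\edge v_1\edge\cdots$ of $C$ is replaced by $\cdots\edge v_0r_1\edge\cdots\edge v_0r_m\edge v_1r_1\edge\cdots$. Because $V(P)\cdot R$ stays at bounded Hausdorff distance from $V(P)$, the new $2$-factor crosses every deep finite cut exactly as often as $C$ did, so the conditions of \cref{2factor} transfer; there is no merging step and hence no end-counting problem. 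The normal case is handled identically after choosing the $r_j$ along a hamiltonian path of $\cay(G/F_n;S_1^{\pm1})$ with $\#S_1\le\log_2 m$.
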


Here is an outline of the paper. 
\Cref{PrelimSect} collects some preliminaries on vertex-transitive graphs, Cayley graphs, hamiltonian circles, and other basic topics.
\Cref{finmanyends} proves the results that were stated above on vertex-transitive graphs with finitely many ends. (It also provides an explicit description of the Cayley graphs with finitely many ends that have a unique hamiltonian circle or a unique two-way-infinite hamiltonian path.)
\Cref{LeggeSect} gives a rigorous construction of Legge's examples~\pref{LeggeEg}.
\Cref{FreeSect} proves \cref{FreeUHCEg,VirtFree} (and some other results on free groups that were not stated above). 
Finally, \Cref{F2Sect} proves \cref{F_2}.

\wantpicturefalse

\section{Preliminaries} \label{PrelimSect}

\subsection{Cyclic groups and dihedral groups}

\begin{notation} \label{ZDNotation}
\leavevmode
	\begin{enumerate}
	\item \label{ZDNotation-n}
	For $n \in \Z^+$:
		\begin{enumerate}
		\item $\Z_n$ is the (additive) cyclic group of integers modulo~$n$,
		and
		\item \label{ZDNotation-n-D}
		$D_{2n} = \langle\, a, b \mid a^2 = b^2 = (ab)^n = 1 \,\rangle$ is the dihedral group of order~$2n$.
		\end{enumerate}
	\item \label{ZDNotation-Dinfty}
	$D_\infty = \langle\, a, b \mid a^2 = b^2 = 1 \,\rangle$ is the infinite dihedral group.
	\end{enumerate}
\end{notation}

\subsection{Vertex-transitive graphs and Cayley graphs}

\begin{definition}[{\cite[p.~33]{godsil2001algebraic}}] \label{VertTransDefn}
A graph~$X$ is \emph{vertex-transitive} if the automorphism group of~$X$ acts transitively on the vertex set of~$X$: for all $x,y \in V(X)$, there exists $\varphi \in \aut$, such that $\varphi(x) = y$.
\end{definition}

\begin{definition} \label{CayleyDefn}
(cf. \cite[p. 34]{godsil2001algebraic}). If $S$ is a symmetric subset of a group~$G$, then the corresponding \emph{Cayley
graph} $\cay(G;S)$ is the undirected graph whose vertices are the elements of $G$, and such that vertices
$x$ and $y$ are adjacent if and only if $x^{-1}y \in S$. (A set~$S$ is \emph{symmetric} if $S = S^{-1}$, where  $S^{-1} = \{\, s^{-1}\mid s\in S \,\}$.) All Cayley graphs are vertex-transitive, because the left translation $x \mapsto gx$ is an automorphism for every $g \in G$. 
\end{definition}

\begin{definition}[cf.\ {\cref{2wayHPDefn}}] 
The \emph{two-way-infinite path}~$P_\infty$ is the graph $\cay(\Z; \pm 1)$. If $X$ is a graph, then any subgraph of~$X$ that is isomorphic to~$P_\infty$ is called a \emph{two-way-infinite path in~$X$}.
\end{definition}

\begin{notation}[cf.~{\cite[\S2.1]{WitteGallian-survey}}]
 If we fix a starting vertex~$v$, then the sequence  $(s_1, s_2, \ldots , s_m)$ of elements of $S^{\pm1}$ represents the walk in $\cay(G;S)$
that visits the vertices
$$v, vs_1, vs_1s_2, \ldots , vs_1s_2 \ldots s_m.$$
Also, we use $s^{\ell}$ or $s^{-\ell}$
to represent the sequences $(s,\ldots,s)$ and $(s^{-1},\ldots,s^{-1})$ of
length $\ell$.
\end{notation}

\begin{definition}(cf \cite[p 19]{hammack2011handbook})\label{Sabidussi}
An action of a group~$G$ on a set~$V$ is \emph{sharply transitive} if it is both transitive and free. This means that, for all $v,w \in V$, there is a unique $g \in G$, such that $gv = w$.
\end{definition}

\begin{lem}[Sabidussi's Theorem {\cite[Theorem 1.2.20]{dobson2022symmetry}}]\label{sabidussi}
Let $X$ be a graph, and let $G$ be a group. 
There is a subset~$S$ of~$G$, such that $X$ is isomorphic to the Cayley graph\/ $\cay(G; S)$ if and only if\/ $\aut$ contains a subgroup that is isomorphic to~$G$ and acts sharply transitively on $V(X)$.
\end{lem}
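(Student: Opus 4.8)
The plan is to establish the two implications separately; both are standard, and the only real content is bookkeeping with the identification of $G$ with a group of automorphisms of~$X$.

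For the ``only if'' direction, I would assume (after replacing $X$ by an isomorphic copy) that $X = \cay(G;S)$ outright, and look at the left translations $\lambda_g \colon V(X) \to V(X)$ defined by $\lambda_g(x) = gx$. Since adjacency in a Cayley graph depends only on $x^{-1}y$, and $(\lambda_g x)^{-1}(\lambda_g y) = x^{-1}g^{-1}gy = x^{-1}y$, each $\lambda_g$ is an automorphism; the map $g \mapsto \lambda_g$ is an injective homomorphism, so its image is a subgroup of~$\aut$ isomorphic to~$G$. This subgroup acts transitively because $\lambda_{yx^{-1}}(x) = y$, and freely because $\lambda_g(x) = x$ forces $g = 1$; hence the action is sharply transitive.

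For the ``if'' direction, let $\psi \colon G \to H$ be an isomorphism onto a subgroup $H \le \aut$ that acts sharply transitively on $V(X)$, and fix a base vertex $v_0$. The orbit map $\phi \colon G \to V(X)$, $\phi(g) = \psi(g)(v_0)$, is a bijection: it is surjective because the action is transitive and injective because it is free. I would then define
\[ S = \{\, g \in G : \phi(g) \text{ is adjacent to } v_0 \,\} \]
and check two things. First, $S$ is symmetric: if $\phi(g)$ is adjacent to $v_0$, then applying the automorphism $\psi(g^{-1}) = \psi(g)^{-1}$ to that pair of vertices shows $v_0 = \psi(g^{-1})\bigl(\phi(g)\bigr)$ is adjacent to $\psi(g^{-1})(v_0) = \phi(g^{-1})$, so $g^{-1} \in S$ (and $1 \notin S$ provided $X$ has no loops, so $\cay(G;S)$ is a bona fide simple graph). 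Second, $\phi$ is a graph isomorphism $\cay(G;S) \to X$: vertices $x,y \in G$ are adjacent in $\cay(G;S)$ iff $x^{-1}y \in S$, i.e.\ iff $\phi(x^{-1}y) = \psi(x^{-1}y)(v_0)$ is adjacent to~$v_0$; applying the automorphism $\psi(x)$ to that pair and using $\psi(x)\psi(x^{-1}y) = \psi(y)$, this holds iff $\phi(y)$ is adjacent to $\phi(x)$ in~$X$. Thus $\phi$ preserves and reflects adjacency, which finishes the proof.

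There is no obstacle of real substance here; the proof is routine. The only steps that require any care are the symmetry of~$S$ and the equivariance computation in the last step, and both of these rest on exactly two facts: the elements of~$H$ preserve adjacency because they are automorphisms of~$X$, and $\psi$ is a homomorphism so that $\psi(x)\psi(x^{-1}y) = \psi(y)$.
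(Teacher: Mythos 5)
Your proof is correct and is the standard argument for Sabidussi's theorem (left translations for one direction; the orbit map from a fixed base vertex, with the connection set defined as the preimage of the neighbourhood of that vertex, for the other). The paper does not prove this lemma itself but cites it from the literature, and your argument matches the canonical one, including the necessary checks that $S$ is symmetric and that the orbit map preserves and reflects adjacency.
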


The following notation is intended to make it easier for the reader to verify that $g$ is indeed adjacent to~$h$ in $\cay(G; S^{\pm1})$.

\begin{notation} \label{gtogs}
If $h = gs$ and $s \in S$, then we often use $g \stackrel{s}{\to} h$ or $h \stackrel{s}{\from} g$ to denote the edge $g \edge h$ in $\cay(G; S^{\pm1})$.
\end{notation}

\subsection{Edge cuts and hamiltonian circles}

\begin{definition}[cf.~{\cite[p.~1426]{MR2800969}}]
Let $X$ be a graph.
    \begin{enumerate}
    \item For $A \subseteq V(X)$, we let $\bdry_X(A)$ be the set of all edges of~$X$ that have one endpoint in~$A$ and the other endpoint in the complement of~$A$. 
    \item A set~$\edgecut$ of edges of~$X$ is an \emph{edge cut} if there is a nonempty, proper subset~$A$ of $V(X)$, such that $\edgecut = \bdry_X(A)$.
    \item An edge cut is \emph{finite} if it contains only finitely many edges.
    \end{enumerate}
\end{definition}

\begin{lem}[{\cite[Thm.~3]{kundgen2017cycles}}]\label{2factor}
Let $X$ be a locally finite graph. Then every hamiltonian circle~$C$ in~$X$ is a $2$-factor of~$X$, such that:
\begin{enumerate}
    \item \label{2factor-even}
    every finite edge cut contains a positive even number of edges of~$C$, and
    \item \label{2factor-2edges}
    for all distinct $e_1, e_2\in E(C)$, there is a finite edge cut $\edgecut$ of~$X$, such that $E(C) \cap \edgecut = \{e_1, e_2\}$.
\end{enumerate}
 Conversely, if a $2$-factor $C$ satisfies \pref{2factor-even} and~\pref{2factor-2edges}, then $C$ is a hamiltonian circle.
\end{lem}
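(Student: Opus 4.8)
The plan is to prove the two implications separately, working throughout inside the Freudenthal compactification~$|X|$ and identifying the hamiltonian circle~$C$ with its underlying point set, a copy of~$S^1$. First I would record that $C$ contains each of its edges wholly: if $p$ were an interior point of an edge~$e$ with $p\in C$ but $e\not\subseteq C$, then the maximal subarc of~$e$ lying in~$C$ would have an endpoint interior to~$e$, and $C$ would fail to be locally an arc there, which $S^1$ never does. Hence $C\cap X$ is a spanning subgraph with $\overline{C\cap X}=C$, and it is $2$-regular because near a vertex~$v$ the space~$C$ is an arc while the only one-dimensional directions at~$v$ in~$|X|$ run along the finitely many edges at~$v$; so $C$ is a $2$-factor. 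For property~(1), take a finite cut $\edgecut=\bdry_X(A)$ and let $\edgecut^\circ$ be the union of its open edges: since $\edgecut$ is finite, no end bridges it, so $|X|\sm\edgecut^\circ$ splits into clopen pieces and there is a locally constant map $|X|\sm\edgecut^\circ\to\{0,1\}$ equal to~$1$ on~$A$ and to~$0$ on $V(X)\sm A$. Restricted to~$C$ --- which is $S^1$ with the $k:=|E(C)\cap\edgecut|$ disjoint open arcs $e^\circ$ ($e\in E(C)\cap\edgecut$) removed --- this map is constant on each of the $k$ resulting closed arcs and flips across each deletion (the two endpoints of the deleted edge lie one in~$A$ and one not); going once around forces $k$ even, and $k\neq 0$ since $C$ is connected and meets both~$A$ and its complement. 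Thus $k$ is positive and even.

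The engine for the rest is a separation statement I would isolate and prove first: \emph{any two disjoint closed subsets of~$|X|$ are separated by a finite edge cut}. Since $X$ is locally finite, $|X|$ is compact and has a basis of open sets whose topological boundary is a finite set of edge-midpoints --- stars of vertices, open intervals inside edges, and truncated neighbourhoods of ends; covering one of the two (compact) sets by finitely many such sets whose closures miss the other yields an open~$U$ with finite, vertex-free boundary, and then $A:=U\cap V(X)$ has $\bdry_X(A)$ finite. For property~(2), given distinct $e_1,e_2\in E(C)$ with midpoints $m_1,m_2$, the two components of $C\sm\{m_1,m_2\}$, with the four half-edges running into $m_1,m_2$ trimmed off, become disjoint compact sets $Q_1,Q_2$ whose union still contains every vertex; I would apply the construction to~$Q_1$, choosing~$U$ to contain the star of each of the two vertex-endpoints of~$Q_1$ and a short interval through each of $m_1,m_2$, so that $U$ meets $e_1$ and $e_2$ each in a single half-open segment. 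Then $\bdry_X(U\cap V(X))$ is a finite cut with $E(C)\cap\bdry_X(U\cap V(X))=\{e_1,e_2\}$. Shaping $U$ so that this cut captures \emph{exactly} $e_1,e_2$ and nothing more is, I expect, the fussiest point of the forward direction. (If $X$ is finite, $C$ is an ordinary hamiltonian cycle and (1), (2) are elementary.)

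For the converse, suppose the $2$-factor~$C$ satisfies (1) and~(2). Property~(1) forces $X$ connected and forbids any finite-cycle component~$Z$ with $V(Z)\neq V(X)$ (else $\bdry_X(V(Z))$ is a finite cut meeting~$C$ not at all); so either $C$ is a spanning finite cycle --- already a hamiltonian circle --- or $C=\bigsqcup_i R_i$ is a disjoint union of double rays. Then $\overline C$ is compact (closed in a compact space), and I would show it is a connected $1$-manifold, hence homeomorphic to~$S^1$. \emph{Connectedness}: a clopen splitting $\overline C=F_1\sqcup F_2$ induces a nontrivial partition $V(X)=(F_1\cap V)\sqcup(F_2\cap V)$ (as $\overline C$ consists of whole edges, whole double rays, and ends, each $F_j$ contains a vertex), and separating $F_1,F_2$ by a finite cut produces a finite cut meeting no~$R_i$, contradicting~(1). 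At vertices and edge-interiors $\overline C$ is locally an arc, so it remains to show each end $\omega\in\overline C$ has exactly two tails of the~$R_i$ converging to it. If there is more than one double ray, no~$R_i$ has both tails at one end: taking $e\in R_i$ and $e'$ in another~$R_j$, property~(2) gives a finite cut crossing~$R_i$ only at~$e$, forcing $R_i$'s two tails onto opposite sides --- impossible if both converge to~$\omega$. And no~$\omega$ has three distinct converging tails $T_1,T_2,T_3$ (which then come from three distinct~$R_i$): picking $e_j\in T_j$ far out and applying~(2) to the three pairs gives finite cuts $\bdry_X(Y_{12}),\bdry_X(Y_{13}),\bdry_X(Y_{23})$, and tracking memberships shows $Z:=Y_{12}\cap Y_{13}\cap Y_{23}$ has $E(C)\cap\bdry_X(Z)=\{e_1,e_2,e_3\}$, an odd number, contradicting~(1). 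Hence every end of~$\overline C$ has one or two converging tails, so $\overline C$ is a compact connected $1$-manifold with boundary, i.e.\ $S^1$ or~$[0,1]$; and $[0,1]$ is excluded because its two boundary ends are distinct, so a finite cut separating them would meet~$C$ in an odd number of edges (the unique double-ray route between them crosses it an odd number of times), again contradicting~(1). Therefore $\overline C\cong S^1$.

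The main obstacle, common to both directions, is the finite-cut separation lemma and its refinements: extracting a genuinely \emph{finite} cut from a topological separation (this is exactly where compactness and local finiteness of~$|X|$ are used), and then shaping that cut so it meets~$C$ in a prescribed pair --- or triple --- of edges and nothing else. Once that is in hand, the remainder is bookkeeping about parities of crossings of double rays through finite cuts, together with the classification of compact connected $1$-manifolds.
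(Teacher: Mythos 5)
The paper offers no proof of this lemma to compare against: it is imported verbatim as Theorem~3 of K\"undgen--Li--Thomassen \cite{kundgen2017cycles}. Judged on its own terms, your forward direction is essentially correct. The two-colouring argument across a finite cut gives~(1), and for~(2) the point you flag as ``fussiest'' is in fact automatic: once $Q_1$ and $Q_2$ are the (trimmed) closures of the two arcs of $C \sm \{m_1,m_2\}$, every vertex lies in exactly one of them, so \emph{any} open $U$ with $Q_1 \subseteq U$ and $U \cap Q_2 = \emptyset$ satisfies $U \cap V(X) = V(Q_1)$, and the only edges of~$C$ joining $V(Q_1)$ to $V(Q_2)$ are $e_1$ and~$e_2$; the sole thing your separation lemma needs to deliver is that $\bdry_X \bigl( V(Q_1) \bigr)$ is \emph{finite}, which it does.

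The converse, however, has a genuine gap where you pass from ``every end of $\overline{C}$ has one or two converging tails'' to ``$\overline{C}$ is a compact connected $1$-manifold with boundary.'' Your two consequences of~(2) --- no double ray has both tails at one end, and no end receives three tails --- are correctly derived, but they do not make $\overline{C}$ locally an arc at an end~$\omega$. Since $C$ is spanning, every basic neighbourhood of~$\omega$ contains infinitely many vertices of~$C$, and these may be spread over infinitely many \emph{distinct} double rays no tail of which converges to~$\omega$: each individual $R_j$ eventually vacates every neighbourhood of~$\omega$, yet for every finite $S$ some $R_j$ still meets the $\omega$-component of $X \sm S$. In that situation $\overline{C}$ near~$\omega$ is a non-locally-connected union of fragments of infinitely many rays together with accumulating ends, so the classification of compact connected $1$-manifolds cannot be invoked; a configuration of this kind can satisfy connectedness of~$\overline{C}$, condition~(1), and both of your tail conditions while failing to be a circle (it must of course violate~(2), but not via the two pairwise consequences you extract). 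This is exactly the known subtlety in the area --- Bruhn and Stein measure the degree of an end in~$\overline{C}$ by edge-disjoint \emph{arcs of $\overline{C}$} ending at~$\omega$, not by rays of~$C$ converging to~$\omega$, precisely because the ray count can be~$0$ at an end of large arc-degree. To close the gap you would have to use~(2) more globally, e.g.\ intersecting the cuts it provides for pairs of edges on the many rays entering a neighbourhood of~$\omega$ and running a compactness argument to produce one finite cut isolating a neighbourhood of~$\omega$ that meets only two rays. (Your exclusion of $[0,1]$ is also only sketched --- the parity count needs a handshake argument on one side of the separating cut, with some care about infinite sums mod~$2$ --- but that is minor by comparison.)
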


The following observation is presumably known, but we provide a proof because we do not have a reference. Our argument derives it from \cref{2factor}, but it is also a consequence of the fact \cite[pp.~145--146]{BridsonHaefliger} that the set of ends of a group is independent of the choice of a finitely generated Cayley graph of the group.

 \begin{cor} \label{ChangeS}
Let $S$ be a finite, symmetric generating set of the group~$G$, and let $s \in S$, such that $\cay(G; s^{\pm1})$ is a hamiltonian circle in $\cay(G; S)$. If $\varphi$ is any group automorphism of~$G$, and $S_0$ is any finite, symmetric generating set of~$G$ that contains $\varphi(s)$, then $\cay \bigl( G; \varphi(s)^{\pm1} \bigr)$ is a hamiltonian circle in $\cay(G; S_0)$.
\end{cor}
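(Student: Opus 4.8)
The plan is to deduce the statement from the characterization of hamiltonian circles in \cref{2factor}; the one substantive ingredient will be that finiteness of an edge cut~$\bdry_X(A)$ does not depend on which finite, symmetric generating set is used to form the Cayley graph~$X$. First I would dispose of the automorphism: the bijection $g \mapsto \varphi(g)$ is a graph isomorphism from $\cay(G;S)$ onto $\cay\bigl(G;\varphi(S)\bigr)$ that carries the spanning subgraph $\cay(G;s^{\pm1})$ onto $\cay\bigl(G;\varphi(s)^{\pm1}\bigr)$, and $\varphi(S)$ is again a finite, symmetric generating set of~$G$ that contains~$\varphi(s)$. Replacing $(s,S)$ by $\bigl(\varphi(s),\varphi(S)\bigr)$, it therefore suffices to prove the following: if $S$ and~$S_0$ are finite, symmetric generating sets of~$G$ with $s \in S \cap S_0$, and $C := \cay(G;s^{\pm1})$ is a hamiltonian circle in $\cay(G;S)$, then $C$ is a hamiltonian circle in $\cay(G;S_0)$.

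The central step is the following claim: \emph{for any finite, symmetric generating sets $S$ and~$S_0$ of~$G$ and any subset $A$ of~$G$, the edge cut $\bdry_{\cay(G;S)}(A)$ is finite if and only if $\bdry_{\cay(G;S_0)}(A)$ is finite.} Comparing each of $S$ and~$S_0$ with the finite, symmetric generating set $S \cup S_0$ reduces the claim to the case $S \subseteq S_0$, and there the only implication needing an argument is ``$\bdry_{\cay(G;S)}(A)$ finite $\Rightarrow \bdry_{\cay(G;S_0)}(A)$ finite'' (the converse being immediate, since then $\bdry_{\cay(G;S)}(A) \subseteq \bdry_{\cay(G;S_0)}(A)$). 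To prove it, fix for each $t \in S_0$ a word $t = s_{t,1} s_{t,2} \cdots s_{t,k_t}$ with all $s_{t,i} \in S$. Every edge of $\bdry_{\cay(G;S_0)}(A)$ can be written uniquely as $\{g, gt\}$ with $g \in A$, $gt \notin A$, and $t \in S_0$; the walk $g, gs_{t,1}, gs_{t,1}s_{t,2}, \ldots, gt$ in $\cay(G;S)$ starts in~$A$ and ends outside it, hence traverses some edge of $\bdry_{\cay(G;S)}(A)$. Recording the first such edge, together with~$t$ and the position~$i$ at which it occurs, defines an injection of $\bdry_{\cay(G;S_0)}(A)$ into the finite set $\{\,(e,t,i) : e \in \bdry_{\cay(G;S)}(A),\ t \in S_0,\ 1 \le i \le k_t \,\}$ --- from $e$, $t$, and~$i$ one recovers~$g$, and hence the edge --- so $\bdry_{\cay(G;S_0)}(A)$ is finite.

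To conclude, since $s$ belongs to both $S$ and~$S_0$, the subgraph $C = \cay(G;s^{\pm1})$ --- with vertex set~$G$ and with $g$ adjacent to~$h$ exactly when $g^{-1}h \in \{s,s^{-1}\}$ --- is literally the same in $\cay(G;S)$ and in $\cay(G;S_0)$, and an edge of~$C$ lies in $\bdry_{\cay(G;S_0)}(A)$ precisely when it has one endpoint in~$A$ and the other outside, which is exactly the condition that places it in $\bdry_{\cay(G;S)}(A)$; hence $E(C) \cap \bdry_{\cay(G;S_0)}(A) = E(C) \cap \bdry_{\cay(G;S)}(A)$ for every $A \subseteq G$. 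By \cref{2factor}, $C$ is a $2$-factor of $\cay(G;S)$, so it is $2$-regular, and therefore a $2$-factor of $\cay(G;S_0)$ as well. The claim shows that a subset~$A$ of~$G$ cuts out a finite edge cut in $\cay(G;S_0)$ if and only if it does so in $\cay(G;S)$, and the displayed identity shows these cuts meet~$C$ in the same set of edges; so conditions \fullcref{2factor}{even} and \fullcref{2factor}{2edges} for~$C$ transfer verbatim from $\cay(G;S)$ to $\cay(G;S_0)$ (a finite edge cut $\bdry_{\cay(G;S_0)}(A)$ meets~$C$ in the same nonempty, even set of edges as $\bdry_{\cay(G;S)}(A)$, and a finite edge cut of $\cay(G;S)$ separating two given edges of~$C$ is also a finite edge cut of $\cay(G;S_0)$ separating them), and the converse half of \cref{2factor} then gives that $C$ is a hamiltonian circle in $\cay(G;S_0)$. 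I expect the claim about edge cuts to be the only real obstacle; as the \lcnamecref{ChangeS} itself remarks, it is essentially the classical fact that the set of ends of~$G$ does not depend on the chosen finite generating set, and everything else is routine bookkeeping with \cref{2factor}.
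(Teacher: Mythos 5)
Your proposal is correct and follows essentially the same route as the paper: factor out the automorphism via the graph isomorphism $g \mapsto \varphi(g)$, show that finiteness of $\bdry(A)$ is independent of the choice of finite symmetric generating set, and then transfer conditions \pref{2factor-even} and \pref{2factor-2edges} of \cref{2factor} verbatim. The only difference is in that middle step, where the paper invokes quasi-isometry of Cayley graphs while you give a self-contained combinatorial injection by expanding each $t \in S_0$ as a word in~$S$; both arguments are valid, and yours has the minor virtue of avoiding the quasi-isometry machinery.
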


\begin{proof}
The conditions~\pref{2factor-even} and~\pref{2factor-2edges} of \cref{2factor} are purely graph theoretic, so they are preserved by all graph isomorphisms. Therefore, $\cay \bigl( G; \varphi(s)^{\pm1} \bigr)$ is a hamiltonian circle in $\cay \bigl( G; \varphi(S) \bigr)$.

To complete the proof, we show that $\cay \bigl(G; \varphi(s)^{\pm1})$ remains a hamiltonian circle when $\varphi(S)$ is replaced with any other finite, symmetric generating set $S_0$ that contains~$\varphi(s)$. For convenience, let $C = \cay \bigl(G; \varphi(s)^{\pm1})$, $X = \cay \bigl( G; \varphi(S) \bigr)$, and $X_0 = \cay(G; S_0)$.
We wish to show that conditions~\pref{2factor-even} and~\pref{2factor-2edges} of \cref{2factor} are satisfied with $X_0$ in the place of~$X$.

For completeness, we provide a proof of the well known fact that, for all $A \subseteq G$, the edge cut $\delta_X(A)$ is finite if and only if $\delta_{X_0}(A)$ is finite. Since $X$ and~$X_0$ are Cayley graphs of the same group, they are quasi-isometric \cite[Eg.~8.17(3), p.~139]{BridsonHaefliger}, so there is some~$k$, such that if $e \in \bdry_{X_0}(A)$, then the two endpoints of~$e$ are joined by a path in~$X$ whose length is less than~$k$. This implies that both endpoints of~$e$ are within distance~$k$ of some edge in $\delta_X(A)$. Since $X$ is locally finite, we conclude that if $\delta_X(A)$ is finite, then $\delta_{X_0}(A)$ is also finite. The converse is similar.

We now use this fact to establish~\pref{2factor-even} and~\pref{2factor-2edges}.

\pref{2factor-even} Suppose $\edgecut_0 = \bdry_{X_0}(A)$ is a finite edge cut of $\cay(G; S_0)$. Then $\bdry_{X}(A)$ is a finite edge cut of~$X$, so \fullcref{2factor}{even} tells us that $\bdry_C(A)$ is a positive, even number. Therefore $\edgecut_0$ contains a positive, even number of edges of~$C$.

\pref{2factor-2edges} Let $e_1$ and $e_2$ be two distinct edges of~$C$. By \fullcref{2factor}{2edges}, there is a finite edge cut $\edgecut = \bdry_X(A)$ of~$X$, such that $\bdry_C(A) = \{e_1, e_2\}$. Then $\edgecut_0 = \bdry_{X_0}(A)$ is a finite edge cut of~$X_0$, and we have $E(C) \cap \edgecut_0 = \bdry_C(A) = \{e_1, e_2\}$.
\end{proof}

\begin{lem} \label{UniqueInCayley}
If $C$ is a unique hamiltonian circle in\/ $\cay(G; S)$, then either:
\noprelistbreak
	\begin{enumerate}
	\item \label{UniqueInCayley-s}
 there is an element~$s$ of~$S$, such that $C = \cay(G: s^{\pm 1})$, 
	or
	\item \label{UniqueInCayley-ab}
 there are elements $a$ and~$b$ of order~$2$ in~$S$, such that $C = \cay(G; a,b)$.
	\end{enumerate}
If $G$ is finite, then it must be the case that $\langle s \rangle = G$, or $\langle a,b \rangle = G$, respectively.
\end{lem}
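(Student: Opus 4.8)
The plan is to exploit uniqueness: any automorphism of a graph permutes its hamiltonian circles, so a \emph{unique} hamiltonian circle must be fixed by every automorphism. In the Cayley graph $\cay(G;S)$ the left translations $\lambda_g\colon x\mapsto gx$ are automorphisms (\cref{CayleyDefn}), so $C$ will be invariant under left translation by every element of~$G$. A left‑invariant spanning subgraph of a Cayley graph is itself a Cayley graph on the same vertex set, with connection set the set of neighbours of the identity; reading off that connection set and using that $C$ is a $2$‑factor will force it into one of the two stated forms.

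In more detail, I would first observe that if $\varphi$ is any automorphism of $\cay(G;S)$, then $\varphi(C)$ is again a hamiltonian circle: the conditions \fullcref{2factor}{even} and \pref{2factor-2edges} characterizing hamiltonian circles are purely graph theoretic and hence preserved by~$\varphi$, exactly as in the proof of \cref{ChangeS} (equivalently, $\varphi$ extends to a homeomorphism of the Freudenthal compactification). Since $C$ is the only hamiltonian circle, $\varphi(C)=C$; applying this to $\varphi=\lambda_g$ for every $g\in G$ shows $C$ is left invariant. Next, let $e$ be the identity of~$G$ and set $T=\{\,t\in G:\text{$e$ is adjacent to $t$ in $C$}\,\}$. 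Since $C$ is a subgraph of $\cay(G;S)$ and the neighbours of~$e$ in $\cay(G;S)$ are precisely the elements of~$S$, we get $T\subseteq S$; reversing an edge and translating by $t^{-1}$ shows $t\in T\iff t^{-1}\in T$, so $T$ is symmetric; and left invariance gives, for all $g,h\in G$, that $g\edge h\in E(C)$ iff $g^{-1}h\in T$, i.e.\ $C=\cay(G;T)$.

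The remaining work is a short case analysis. Because $C$ is a $2$‑factor (\cref{2factor}), the vertex~$e$ has exactly two neighbours in~$C$, so $\#T=2$, and $e\notin T$ since there are no loops. If some $t\in T$ satisfies $t\ne t^{-1}$, then $\{t,t^{-1}\}\subseteq T$ forces $T=\{t,t^{-1}\}$, and with $s=t\in S$ this gives conclusion~\pref{UniqueInCayley-s}; otherwise every element of~$T$ equals its own inverse, so $T=\{a,b\}$ with $a\ne b$ both of order~$2$ in~$S$, which is conclusion~\pref{UniqueInCayley-ab}. Finally, when $G$ is finite the Freudenthal compactification of $\cay(G;S)$ is $\cay(G;S)$ itself, so the hamiltonian circle~$C$ is a hamiltonian cycle and in particular connected; but $\cay(G;T)$ is connected if and only if $T$ generates~$G$, so $\langle s\rangle=G$ or $\langle a,b\rangle=G$, respectively.

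I do not expect a serious obstacle here: the whole argument is essentially one line once one knows that a unique hamiltonian circle is automorphism invariant. The only points that need a little care are the justification that graph automorphisms carry hamiltonian circles to hamiltonian circles (which is exactly the graph‑theoretic characterization in \cref{2factor}, as already used for \cref{ChangeS}), and confirming that the case split on a symmetric two‑element subset of~$S$ not containing~$e$ is exhaustive.
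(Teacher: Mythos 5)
Your proposal is correct and follows essentially the same route as the paper's proof: uniqueness forces invariance of $C$ under all left translations, so $C=\cay(G;T)$ where $T$ is the two-element symmetric set of neighbours of the identity in~$C$, and the dichotomy $T=\{s,s^{-1}\}$ versus $T=\{a,b\}$ with $|a|=|b|=2$ gives the two cases. Your added justifications (that automorphisms preserve hamiltonian circles, and the connectivity argument for the finite case) are fine and merely make explicit what the paper leaves implicit.
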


\begin{proof}
The uniqueness of~$C$ implies that it is invariant under all automorphisms of $\cay(G; S)$. In particular, it must be invariant under the left-translation by each element of~$G$. From this, we see that if $a$ and $b$ are the two neighbours of~$1$ in~$C$, then, for every $v \in G$, the vertices $va$ and $vb$ are the neighbours of~$v$ in~$C$; therefore $C = \cay(G; S)$, where $S = \{a,b\}$. Since $S$ must be symmetric, this implies that either $b = a^{-1}$, so \pref{UniqueInCayley-s}~is satisfied with $s = a$, or $|a| = |b| = 2$, so \pref{UniqueInCayley-ab}~is satisfied.
\end{proof}

\section{Graphs with finitely many ends}\label{finmanyends}

This \lcnamecref{finmanyends} proves the results that were stated in the Introduction and relate to vertex-transitive graphs with finitely many ends that have a unique hamiltonian circle:
\begin{itemize}
    \item \cref{FiniteCase}: the only finite ones are cycles;
    \item \cref{2wayUHP}: $P_\infty$ is the only vertex-transitive graph with a unique two-way-infinite hamiltonian path;
    and
    \item \cref{2endedUHC}: the only $2$-ended ones are the square of~$P_\infty$ and the two-way-infinite ladder.
\end{itemize}
(For the reader's convenience, the complete statement of each of these results is reproduced in this section.)
Also, although these results were not mentioned in the Introduction, we give an explicit description of each group~$G$ and symmetric generating set~$S$, such that the Cayley graph $\cay(G;S)$ has:
    \begin{itemize}
        \item $0$ ends and a unique hamiltonian circle (\cref{FiniteCayley});
        or
        \item a unique two-way-infinite hamiltonian path (\cref{CayUnique2way});
        or
        \item $2$ ends and a unique hamiltonian circle (\cref{2endedCayUHC}).
    \end{itemize}


\FiniteCase*

\begin{proof}
Let $X$ be a finite vertex-transitive graph that has a unique hamiltonian cycle~$C$, and let $n = |V(X)|$. 

We claim that $X$ is (isomorphic to) a Cayley graph of either $\Z_n$ or $D_n$. First, note that the uniqueness implies that $C$ is invariant under every automorphism of~$X$, so $\aut \subseteq \autt(C) = D_{2n}$. 
Let $R$ be the group of rotations in~$D_{2n}$ (so $R \cong \Z_n$). If $R \subseteq \aut$, then $X$ is a Cayley graph of $R$, by Sabidussi's Theorem~\pref{Sabidussi}. On the other hand, if $R \not\subseteq \aut$, then, since $\aut$ is a transitive subgroup of~$D_{2n}$, it is easy to see that $\aut \cong D_n$, and that $\aut$ acts sharply transitively on $V(X)$. Therefore, $X$ is a Cayley graph of $D_n$, by Sabidussi's Theorem~\pref{Sabidussi}.
This completes the proof of the claim.

Thus, we have two cases to consider.

\refstepcounter{caseholder} 

\begin{case} \label{FiniteVTpf-Zn}
Assume $X$ is a Cayley graph of~$\Z_n$.
\end{case}
Write $X = \cay(\Z_n; S)$.
From \cref{UniqueInCayley}, we see that we may assume the hamiltonian cycle is $\cay(\Z_n; \pm 1)$, so $\{\pm1\} \subseteq S$. 
If $X$ is not a cycle, then $S$ must contain a third element~$s$. Assume, without loss of generality, that $2 \le s \le n - 2$. Then $(s, -1^{s-1}, s, 1^{n-s - 1})$,  is a (well known) hamiltonian cycle in~$X$. 
\[
\begin{tikzpicture}
        \begin{scope}[scale=0.7]
        \draw[line width=0.5mm] (0,0) arc(180:0:2.5cm and 0.8cm);
        \draw[line width=0.5mm] (5,0) -- (1,0);
        \draw[line width=0.5mm] (1,0) arc(180:360:2.5cm and 0.8cm);
         \draw[line width=0.5mm] (6,0) -- (10,0);
	    \draw[line width=0.5mm] (0,0) arc(180:360:5cm and 1cm);
        \foreach \i in {0,...,10}{
	           \draw (\i,0) node [circle,fill, inner sep=2pt] {};}
        \draw (0,0) node [left] {$0$};
        \draw (5,0) node [below] {$s$};

        \end{scope}
        \end{tikzpicture} 
\]  
This contradicts the uniqueness of~$C$.

\begin{case}
Assume $X$ is a Cayley graph of~$D_n$.
\end{case}
Write $X = \cay(D_n; S)$.
From \cref{UniqueInCayley}, we see that the hamiltonian cycle is $\cay(D_n; a,b)$, where $a$ and~$b$ are reflections that generate~$D_n$. (So $a,b \in S$.)
If $X$ is not a cycle, then $X$ must contain a third element~$c$. 

If $c$ is a rotation, then $c = (ab)^k$, for some~$k$ with $1 \le k < n/2$. It is not difficult to see that $X \cong \cay(\Z_n; \pm 1, \pm 2k)$, so \cref{FiniteVTpf-Zn} applies.

Therefore, we may assume that $c$ is a reflection. Then $\cay(D_n; a,b,c)$ is a cubic, spanning, hamiltonian subgraph of~$X$. It is well known that no finite cubic graph is uniquely hamiltonian (see \cref{smith}), so this spanning subgraph must have a second hamiltonian cycle~$C'$. Since $C'$ is also a hamiltonian cycle in~$X$, this contradicts the uniqueness of~$C$.
\end{proof}

\begin{cor} \label{FiniteCayley}
A finite Cayley graph
$\cay(G; S)$ has a unique hamiltonian cycle if and only if, up to a group isomorphism, it is either\/
        $\cay(\Z_n;  \pm 1)$ or\/ $\cay(D_{2n};a,b)$
        \textup(where $a$ and~$b$ are the generators in \fullcref{ZDNotation}{n-D}\textup).
\end{cor}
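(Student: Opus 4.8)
The plan is to obtain this \lcnamecref{FiniteCayley} as a quick consequence of \cref{FiniteCase} together with \cref{UniqueInCayley}, the converse direction being essentially trivial.

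For the ``if'' direction, note that $\cay(\Z_n;\pm1)$ is the $n$-cycle, while $\cay(D_{2n};a,b)$ is the $2n$-cycle: it is connected since $\{a,b\}$ generates $D_{2n}$, and it is $2$-regular since $a$ and $b$ are distinct involutions, so it is a single cycle through all $2n$ elements of the group. A cycle has no spanning connected $2$-regular subgraph except itself, so each of these two graphs is uniquely hamiltonian.

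For the ``only if'' direction, suppose $\cay(G;S)$ is a finite Cayley graph with a unique hamiltonian cycle~$C$, and let $n=|G|$. Cayley graphs are vertex-transitive, so \cref{FiniteCase} forces $\cay(G;S)$ to be a cycle; hence $C=\cay(G;S)$, and in particular $\cay(G;S)$ is $2$-regular. Since a hamiltonian cycle of a finite graph is a hamiltonian circle, \cref{UniqueInCayley} applies to~$C$ (including the conclusion $\langle s\rangle=G$ or $\langle a,b\rangle=G$ from its last sentence). In case~\fullcref{UniqueInCayley}{s} there is $s\in S$ with $C=\cay(G;s^{\pm1})$ and $\langle s\rangle=G$; as $\cay(G;S)$ is $2$-regular this gives $S=\{s,s^{-1}\}$, and since $s$ generates the group~$G$ of order~$n$, the assignment $s^k\mapsto k$ is an isomorphism $G\cong\Z_n$ carrying $S$ onto $\{\pm1\}$, hence carrying $\cay(G;S)$ onto $\cay(\Z_n;\pm1)$. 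In case~\fullcref{UniqueInCayley}{ab} there are distinct involutions $a,b\in S$ with $C=\cay(G;\{a,b\})$ and $\langle a,b\rangle=G$; a finite group generated by two involutions is dihedral, so, writing $n$ for the order of $ab$ (thus $|G|=2n$), the presentation $\langle\, a,b\mid a^2=b^2=(ab)^n=1\,\rangle$ of $D_{2n}$ furnishes an isomorphism $G\cong D_{2n}$ carrying $S=\{a,b\}$ onto the pair of generators named in \fullcref{ZDNotation}{n-D}, hence carrying $\cay(G;S)$ onto $\cay(D_{2n};a,b)$.

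I do not expect a genuine obstacle here. The only points that deserve a word are the bit of elementary group theory (a finite group generated by two involutions $a,b$ is dihedral, because $\langle ab\rangle$ is then a normal cyclic subgroup of index at most~$2$) and the observation that the isomorphisms above carry~$S$ to the advertised generating set, which is automatic once \cref{FiniteCase} has reduced us to the situation in which $\cay(G;S)$ is an honest cycle. The degenerate cases $n\le 2$ produce multigraphs and are excluded by the usual convention that a cycle has at least three vertices.
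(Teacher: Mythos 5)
Your proposal is correct and follows essentially the route the paper intends: the paper states this corollary without a separate proof precisely because it falls out of \cref{FiniteCase} (forcing the Cayley graph to be a cycle) combined with \cref{UniqueInCayley} and its final sentence ($\langle s\rangle=G$ or $\langle a,b\rangle=G$), which is exactly how you argue. The supporting details you supply (a finite group generated by two distinct involutions is dihedral; the resulting isomorphisms carry $S$ to the advertised generating sets) are accurate.
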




\wayUHP*
\begin{proof}
Let $X$ be a vertex-transitive graph that has a unique two-way-infinite hamiltonian path
	\[ P = \cdots \edge x_{-2} \edge x_{-1} \edge x_{0} \edge x_{1} \edge x_{2} \edge \cdots . \]
We may assume $X \neq P$, so there is an edge of~$X$ that is not in~$P$. By renumbering, we may assume the edge is of the form $(x_0, x_k)$, where $k > 1$.

We claim that, for all $i \in \Z$: 
	\[ \text{$(x_i, x_{i + k})$ is an edge of~$X$ if either $i$ is even or~$k$ is even.} \]
From the uniqueness of~$P$, we have $\aut \subseteq \autt P = D_\infty$. Since $\aut$ is transitive on $V(X)$, this implies the translation $x_n \mapsto x_{n + 2}$ is an automorphism of~$X$. By applying the $(i/2)$th power of this translation to the edge $(x_0, x_k)$, this establishes the claim in the case where $i$ is even.
So we may assume $k$ is even. Then $-k$ is even, so, by the case that has already been established, $(-k, 0)$ is an edge of~$X$; hence $x_0$ is adjacent to both $x_k$ and~$x_{-k}$. By vertex-transitivity, we conclude that $x_i$ is adjacent to both $x_{i+k}$ and~$x_{i-k}$. In particular, it is adjacent to $x_{i + k}$. This completes the proof of the claim.

The claim implies that the graph $X$ has the two-way-infinite hamiltonian path $(k, -1^{k-2}, k, -1)^\infty$. Here is an illustration of the case where $k = 5$:
\[
\begin{tikzpicture}
        \begin{scope}[scale=0.7]
        \draw[line width=0.5mm] (0,0) arc(180:0:2.5cm and 0.8cm);
        \draw[line width=0.5mm] (6,0) arc(180:0:2.5cm and 0.8cm);
        \draw[line width=0.5mm] (12,0) arc(180:0:2.5cm and 0.8cm);
        \draw[line width=0.5mm] (2,0) arc(180:360:2.5cm and 0.8cm);
        \draw[line width=0.5mm] (8,0) arc(180:360:2.5cm and 0.8cm);
        \draw[line width=0.5mm] (14,0) arc(180:290:2.5cm and 0.8cm);
        \draw[line width=0.5mm] (1,0) arc(360:300:2.5cm and 0.8cm);
        \draw[ line width=0.5mm] (0,0)--(1,0);
        \draw[ line width=0.5mm] (2,0)--(5,0);
        \draw[ line width=0.5mm] (7,0)--(6,0);
        \draw[ line width=0.5mm] (8,0)--(11,0);
        \draw[ line width=0.5mm] (12,0)--(13,0);
        \draw[ line width=0.5mm] (14,0)--(17,0);

	\foreach \i in {0,...,17}{
	\draw (\i,0) node [circle,fill, inner sep=2pt] {};}
        \end{scope}
        \end{tikzpicture} 
\]  
This contradicts the uniqueness of~$P$.
\end{proof}

\begin{cor} \label{CayUnique2way}
A Cayley graph $\cay(G; S)$ has a unique two-way-infinite hamiltonian path if and only if \textup(up to an isomorphism of groups\textup) either
\noprelistbreak
	\begin{itemize}
	\item $G = \Z$ and $S = \{\pm1\}$, 
	or
	\item $G = D_{\infty}$ and $S = \{a, b\}$ \textup(where $a$ and~$b$ are the generators in \fullcref{ZDNotation}{Dinfty}\textup).
	\end{itemize}
\end{cor}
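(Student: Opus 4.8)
The plan is to deduce the corollary from \cref{2wayUHP}. Every Cayley graph is vertex-transitive, so if $\cay(G;S)$ has a unique two-way-infinite hamiltonian path, then \cref{2wayUHP} forces $\cay(G;S) \cong P_\infty$. Conversely, $P_\infty$ is $2$-regular, hence is its own only spanning $2$-regular subgraph, so it has a unique two-way-infinite hamiltonian path. Therefore the corollary is equivalent to the purely group-theoretic problem of determining, up to group isomorphism, all pairs $(G,S)$ with $\cay(G;S)\cong P_\infty$.

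For the forward direction I would reason directly, much as in the proof of \cref{UniqueInCayley}. Left translation acts freely on the vertex set of a Cayley graph, so the neighbours $\{\,vs : s\in S\,\}$ of a vertex~$v$ are pairwise distinct; since $P_\infty$ is $2$-regular, this forces $\#S = 2$. Write $S = \{a,b\}$ with $a\ne b$; connectedness of $P_\infty$ gives $\langle a,b\rangle = G$, and infiniteness of $P_\infty$ gives that $G$ is infinite. Because $S = S^{-1}$, there are exactly two possibilities: either $b = a^{-1}$, in which case $G = \langle a\rangle$ is infinite cyclic and $(G,S)\cong(\Z,\{\pm1\})$; or $a^2 = b^2 = 1$, in which case $G$ is an infinite group generated by two involutions. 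Any such group is $D_\infty$ — every proper quotient of $D_\infty\cong\Z_2\ast\Z_2$ is a finite dihedral group — and then $S$ is its standard generating pair. For the converse, $\cay(\Z;\pm1)$ is $P_\infty$ by definition, and a one-line check of the alternating-word structure of $D_\infty$ shows $\cay(D_\infty;\{a,b\})$ is again (isomorphic to) the two-way-infinite path~$P_\infty$.

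I do not expect a genuine obstacle: \cref{2wayUHP} supplies all the real content, and what remains is bookkeeping. The two points needing (routine) care are the step $\#S = 2$ from $2$-regularity and the classification of infinite two-generator groups in which both generators are involutions; this is precisely the $\Z_n$-versus-$D_n$ dichotomy already encountered in the proof of \cref{FiniteCase}. As an alternative to the direct argument one could instead apply Sabidussi's Theorem (\cref{sabidussi}) and enumerate the subgroups of $\autt(P_\infty) = D_\infty$ that act sharply transitively on $V(P_\infty)$, which are the translation subgroup $\cong\Z$ together with certain index-two copies of~$D_\infty$.
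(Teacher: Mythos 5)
Your proposal is correct. The paper prints no proof of this corollary, but judging from how it handles the parallel classifications (the proof of \cref{FiniteCase} and especially \cref{2endedCayUHC}, where the sharply transitive subgroups of $\autt(P_\infty)\cong D_\infty$ are explicitly found to be $\langle ab\rangle\cong\Z$ and $\langle a,bab\rangle\cong D_\infty$), the intended derivation is: reduce to $\cay(G;S)\cong P_\infty$ via \cref{2wayUHP}, then invoke Sabidussi's theorem \pref{sabidussi} and that enumeration. That is precisely the route you relegate to a closing remark; your primary argument is a more elementary one that bypasses Sabidussi entirely. It is sound: $\#S=2$ follows from $2$-regularity (the neighbours $vs$, $s\in S$, are distinct by cancellation, under the standing convention $1\notin S$), the symmetric-set dichotomy $b=a^{-1}$ versus $a^2=b^2=1$ is the same one used in the proof of \cref{UniqueInCayley}, and the one nontrivial group-theoretic input --- that an infinite group generated by two involutions must be $D_\infty$ --- is correctly justified by noting that every proper quotient of $\Z_2*\Z_2$ is finite. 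The direct argument buys self-containedness and avoids the (mild) subtlety in Babai's lemma about conjugacy classes of sharply transitive subgroups; the Sabidussi route buys uniformity with the other classification corollaries in the paper. Either is acceptable, and your write-up contains no gap.
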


The following \lcnamecref{label2Ended} will be used in the proof of \cref{2endedUHC}.

\begin{lem} \label{label2Ended}
Let $X$ be a two-ended, vertex-transitive graph with a unique hamiltonian circle $C = P_1 \cup P_2$, where $P_1$ and~$P_2$ are two-way-infinite paths \textup(see \cref{hamin2ended}\,\textup).
Then there are 
	an automorphism~$g_2$  of~$X$ 
	and 
	a labelling of the vertices of~$X$,
 such that 
	\[ P_1 = \cdots \edge x_{-1} \edge x_{0} \edge x_{1} \edge \cdots,
	\qquad	
	P_2 = \cdots \edge y_{-1} \edge y_{0} \edge y_{1} \edge \cdots
	, \]
and
	\[ \text{$g_2(x_i) = x_{i+2}$ \quad and \quad $g_2(y_i) = y_{i+2}$ \quad for all $i \in \Z$} .
 \]
\end{lem}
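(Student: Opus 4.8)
The plan is to extract everything from the fact that, by uniqueness, $C$ — and hence the unordered pair $\{P_1,P_2\}$ of its two arcs — is invariant under all of $\aut$. First I would set up the relevant subgroup: let $H$ be the stabilizer of $P_1$ in $\aut$. Since every automorphism permutes $\{P_1,P_2\}$, the subgroup $H$ is normal of index at most~$2$; and if the index were~$1$, then $V(P_1)$ and $V(P_2)$ would be two distinct orbits of $\aut$ on $V(X)$, contradicting vertex-transitivity. So $[\aut:H]=2$, and I would fix an element $\gamma\in\aut$ that swaps $P_1$ and $P_2$. I would also record that $H$ is still transitive on $V(P_1)$ (and, symmetrically, on $V(P_2)$): if $\varphi\in\aut$ carries one vertex of $P_1$ to another, then the arc $\varphi(P_1)$ of $\varphi(C)=C$ meets $P_1$, so $\varphi(P_1)=P_1$, i.e.\ $\varphi\in H$.

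The heart of the matter is to prove that, for $i\in\{1,2\}$, the restriction homomorphism $\rho_i\colon H\to\autt(P_i)\cong D_\infty$ is \emph{injective}. Here is the argument I would use. Suppose $g\in H$ acts trivially on $P_1$. Then $g$ fixes a ray of $P_1$ pointwise, hence fixes both ends of $X$ (these are the two ends of $P_1$, by \cref{hamin2ended}). Applying uniqueness to the hamiltonian circle $g(C)=P_1\cup g(P_2)$ forces $g(P_2)=P_2$, and an automorphism of the two-way-infinite path $P_2$ that fixes both of its ends is a translation, by some $c\in\Z$. Choosing an edge $x\edge y$ with $x\in P_1$, $y\in P_2$ (such an edge exists by connectedness, since $V(P_1),V(P_2)\ne\emptyset$), the images $x\edge g^k(y)$ for $k\in\Z$ are edges of~$X$, so $x$ is adjacent to vertices arbitrarily far along $P_2$ unless $c=0$; local finiteness then gives $c=0$, and $g$ fixes all of $V(X)=V(P_1)\cup V(P_2)$, so $g=\mathrm{id}$. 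Thus each $\rho_i$ is an isomorphism onto a transitive subgroup $H_i\le D_\infty$.

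Next I would transport the action via $\gamma$. Fix any labelling $V(P_1)=\{\,x_i:i\in\Z\,\}$ with $x_i\edge x_{i+1}$, and define the labelling of $P_2$ by $y_i:=\gamma(x_i)$ (legitimate, since $\gamma$ restricts to a graph isomorphism $P_1\to P_2$). Then, in these labellings, the action of $h\in H$ on $P_2$ is the action of $\gamma^{-1}h\gamma$ on $P_1$; that is, $\rho_2(h)=\rho_1(\gamma^{-1}h\gamma)$ as permutations of $\Z$. Since $H$ is normal, this gives $H_1=H_2=:H_*$ and makes $\psi:=\rho_2\circ\rho_1^{-1}$ an automorphism of $H_*$. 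Now transitivity pins $H_*$ down enough: its translation subgroup is $\Z$ or $2\Z$ (a transitive subgroup of $D_\infty$ cannot have translation subgroup $k\Z$ with $k\ge3$), so $H_*$ contains the translation $\sigma$ by~$2$, and $\sigma$ is a power of a generator of that translation subgroup, which is characteristic in $H_*$ (being all of $H_*$ when $H_*\cong\Z$, and the unique index-$2$ cyclic subgroup when $H_*\cong D_\infty$). Hence $\psi(\sigma)=\sigma^{\pm1}$. Taking $g_2:=\rho_1^{-1}(\sigma)\in H$, we get $g_2(x_i)=x_{i+2}$ for all $i$, and $g_2$ acts in the $y$-labelling either as $y_i\mapsto y_{i+2}$ or as $y_i\mapsto y_{i-2}$; in the latter case, replace $y_i$ by $y_{-i}$ (still a valid labelling of $P_2$). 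This yields the required $g_2$ and labelling.

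The step I expect to be the genuine obstacle is the injectivity of the $\rho_i$ and, closely related, the use of $\gamma$ to force $g_2$ to translate $P_1$ and $P_2$ \emph{by the same amount}: without these one could only produce an automorphism translating $P_1$ by~$2$ and $P_2$ by some (possibly larger) even integer, which would not prove the statement. It is precisely in these points that uniqueness, connectedness, local finiteness, and vertex-transitivity all get used; the remainder is elementary arithmetic inside $D_\infty$.
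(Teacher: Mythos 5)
Your proof is correct, and it rests on the same three ingredients as the paper's argument: transitivity of the setwise stabilizer $H$ of~$P_1$ (so that its image in $\autt(P_1)\cong D_\infty$ contains the translation by~$2$), an automorphism swapping $P_1$ and~$P_2$, and the observation that an automorphism fixing $P_1$ pointwise cannot translate $P_2$ nontrivially (connectedness supplies an edge between the two paths, and local finiteness then forces the translation to be trivial). The packaging, however, is genuinely different. The paper first argues, via the fact that $d(g^n x_0, g^n y_0)$ is bounded, that its translation $g$ moves $P_2$ by some fixed $k>0$; it then conjugates by the swap to obtain $h$ translating $P_1$ by~$k$ and $P_2$ by~$2$, and applies the local-finiteness trick to the single element $g^k h^{-2}$, which fixes $P_1$ pointwise and translates $P_2$ by $k^2-4$, forcing $k=2$. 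You instead promote the local-finiteness trick to a structural statement --- the restriction homomorphisms $\rho_i\colon H\to\autt(P_i)$ are injective --- and then transfer ``translation by~$2$'' from $P_1$ to~$P_2$ abstractly: conjugation by the swap (using normality of~$H$) identifies $\rho_1(H)$ with $\rho_2(H)$, the translation subgroup is characteristic in any transitive subgroup of $D_\infty$ (it is either the whole group or its unique infinite cyclic index-$2$ subgroup), and an automorphism of~$\Z$ is $\pm1$. Your route buys a small simplification: the sign ambiguity $\psi(\sigma)=\sigma^{\pm1}$ is absorbed by reversing the labelling of~$P_2$ at the end, so you never need the paper's preliminary bounded-distance argument pinning down the direction in which $g$ translates~$P_2$; the cost is a little extra group-theoretic bookkeeping (normality and transitivity of~$H$, injectivity of both restrictions, the characteristic-subgroup argument). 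Both arguments are sound, and yours proves exactly the statement as written, since the lemma does not require the labellings of $P_1$ and~$P_2$ to be coherently oriented toward the ends.
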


\begin{proof}
By assumption, the Freudenthal compactification~$|X|$ has two points at infinity; let us call them $+\infty$ and~$-\infty$. By \cref{hamin2ended}, we may choose a labelling so that 
	\[ \lim_{n \to +\infty} x_n = \lim_{n \to +\infty} y_n = +\infty 
	\text{\quad and\quad}
	\lim_{n \to -\infty} x_n = \lim_{n \to -\infty} y_n = -\infty
	. \]

Since the setwise stabilizer of $V(P_1)$ acts on~$P_1$ via a transitive subgroup of $\autt(P_1) \cong D_\infty$, this stabilizer must contain an element~$g$ that acts on $V(P_1)$ via the translation by~$2$: i.e., $g(x_i) = x_{i+2}$ for all~$i$. 

 Then $\lim_{n \to +\infty} g^n x_0 = +\infty$. 
Since
	$d(g^n y_0, g^n x_0) = d(y_0, x_0) $
is constant (hence bounded, independent of~$n$), this implies $\lim_{n \to +\infty} g^n y_0 = +\infty$. 
So $g$ must also act on~$P_2$ by a nontrivial positive translation: $g(y_i) = y_{i + k}$, for some fixed $k > 0$.

To complete the proof, we will show that $k = 2$. Since $X$ is vertex-transitive (and $C$ is invariant under every automorphism), there is an automorphism~$\alpha$ of~$X$ that interchanges $P_1$ and~$P_2$. Conjugating $g$ by~$\alpha$ (and also passing to the inverse, if $\alpha$ interchanges $+\infty$ and~$-\infty$) yields $h \in \aut$, such that 
	\[ \text{$h(x_i) = x_{i + k}$ \quad and \quad $h(y_i) = y_{i + 2}$ \quad for all~$i$} . \]
Let $a = g^k h^{-2}$. Then $a$ fixes every vertex of~$P_1$, and acts as the translation by $k^2 - 4$ on~$P_2$. Since $X$ is connected, it must have some edge of the form $x_i \edge y_j$. Since $a$ is an automorphism that fixes $x_i$, then $x_i$ must be adjacent to every vertex in the $a$-orbit of~$y_j$, so the $a$-orbit of $y_j$ must be finite (since $X$ is locally finite). Therefore, $a$ cannot act on~$P_2$ by a nontrivial translation, so $k^2 - 4$ must be~$0$. Since $k > 0$, this implies $k = 2$, which completes the proof.
\end{proof}

\secondmain*

\begin{proof}
It is obvious that both of these graphs have $2$ ends, and (using \cref{hamin2ended}) it is not difficult to see that each of them has a unique hamiltonian circle. Therefore, we will focus on proving that there are no other such graphs.

Let $X$ be a two-ended, vertex-transitive graph with a unique hamiltonian circle~$C$, and write $C = P_1 \cup P_2$ as in \cref{hamin2ended}. Then \cref{label2Ended} provides an element $g_2$ of $\aut$, such that $g_2 (x_i) = x_{i+2}$ and $g_2 (y_i) = y_{i+2}$ for all~$i$.

Let $X_1$ be the subgraph of~$X$ induced by $V(P_1)$. This is vertex-transitive (since $X$ is vertex-transitive, and $C$ is invariant under every automorphism of~$X$). Hence, if $X_1 \neq P_1$, then we see from \cref{2wayUHP} that $X_1$ has a two-way-infinite hamiltonian path~$P_1'$ that is not equal to~$P_1$. Then $P_1' \cup P_2$ is a hamiltonian circle in~$X$, which contradicts the uniqueness of~$C$. Thus, we conclude that $P_1$ (and, similarly~$P_2$) is an induced subgraph of~$X$.
Therefore, if we let $r$ be the degree of~$X$, then every vertex has exactly $r - 2$ neighbours that are in the opposite component of~$C$.

We claim that, for every edge of the form $(x_i, y_j)$, there is an edge of the form $(x_{i+1}, y_k)$, such that $j - k$ is odd. By shifting the labellings of $P_1$ and~$P_2$, there is no harm in assuming $i = j= 0$. For convenience, let us say that a vertex of~$X$ is \emph{even} if it is of the form $x_{2m}$ or~$y_{2m}$, and that it is \emph{odd} otherwise. Now, if the desired edge does not exist, then every neighbour of~$x_1$ is even. Hence, if $y_{2m}$ is any even vertex in~$P_2$, then each of the $r - 2$ edges of the form $(x_1, y_k)$ can be translated by a power of~$g_2$ to an edge that is incident with~$y_{2m}$. So $y_{2m}$ has $r - 2$ neighbours that are odd vertices in~$P_1$. This accounts for all of the neighbours of~$y_{2m}$ that are in~$P_1$, so $y_{2m}$ is not adjacent to~$x_0$. Since $y_{2m}$ is an arbitrary even vertex of~$P_2$, we conclude that no neighbour of~$x_0$ is an even vertex. Then, since $y_i$ is a neighbour of~$x_0$, we see that $i$ is odd. Hence, we may choose $(x_1, y_k)$ to be any edge from~$x_1$ to a vertex in~$P_2$. This completes the proof of the claim.

We now claim that, more precisely, $(x_{i+1}, y_{j+1})$ is an edge. To see this, let $X'$ be the spanning subgraph of~$X$ that consists of $P_1 \cup P_2$, plus the orbits of the edges $(x_i, y_j)$ and $(x_{i+1}, y_k)$ under~$\langle g_2 \rangle$. Since $j - k$ is odd, every vertex of~$X'$ has degree~$3$. 
For simplicity, assume $i = j = 0$ (so $k$ is odd), by shifting the labellings of $P_1$ and~$P_2$. (So we wish to show that $k = 1$.)
We may also assume $k > 0$: if $k < 0$, then, after interchanging $P_1$ and~$P_2$, we have edges $(x_0, y_0)$ and $(x_k, y_1)$; since $k$ is odd, applying of power of~$g_2$ to the second edge yields the edge $(x_1, y_{1- k})$, and $1 - k > 0$. 
However, if $k > 1$, then it is not difficult to see that there is a second hamiltonian circle~$C'$ in~$X'$ (which contradicts the uniqueness of~$C$). 
One of the two-way-infinite paths in~$C'$ is 
	\[ \ldots, x_0, \ y_k, y_{k-1}, x_{k-1}, x_k, \ y_{2k}, y_{2k-1}, x_{2k-1}, x_{2k}, \ y_{3k}, \ldots , \]
and the other is
	\begin{align*}
	\ldots, \ &y_{k-2}, y_{k-3}, \ldots, y_1, x_1, x_2, \ldots, x_{k-2}, 
		\\ &y_{2k-2}, y_{2k-3}, \ldots, y_{k+1}, x_{k+1}, x_{k+2}, \ldots, x_{2k-2}, 
		\\ &y_{3k-2}, y_{3k-3}, \ldots, y_{2k+1}, x_{2k+1}, x_{2k+2}, \ldots, x_{3k-2}, 
		\ \ldots
	\end{align*}
Here is an illustration (for $k = 7$):
\[ \begin{tikzpicture}
    \begin{scope}[scale=0.7]

    \draw[line width=0.5mm,gray] (-8,0)--(-3,1)--(-6,1)--(-6,0)--(-3,0)--(3,1)--(0,1)--(0,0)--(3,0)--(9,1)--(6,1)--(6,0)--(9,0);

    \draw[line width=0.5mm] (-7,0)--(-1,1)--(-2,1)--(-2,0)--(-1,0)--(5,1)--(4,1)--(4,0)--(5,0)--(9,0.65);

    \foreach \i in {-7,...,9}{
    \draw (\i,1) node [circle,fill, inner sep=2pt] {};
    \draw (\i,0) node [circle,fill, inner sep=2pt] {};

    }
    \end{scope}
    \end{tikzpicture} \]
Hence, we must have $k = 1$, which completes the proof of the claim.

Let $g_1$ be the permutation of $V(X)$ that translates by~$1$ on both $P_1$ and~$P_2$: $g_1(x_n) = x_{n + 1}$ and $g_1(y_n) = y_{n+1}$. Then the last claim shows that the image of $(x_i, y_j)$ under~$g_1$ is an edge of~$X$. Since $(x_i, y_j)$ is an arbitrary edge joining $P_1$ and $P_2$, we conclude that $g_1$ is an automorphism of~$X$.

If $X$ is cubic, this implies that $X$ is the two-way-infinite ladder.
So we may assume there are edges from $x_0$ to $y_j$ and~$y_k$, where $j \neq k$. 

\refstepcounter{caseholder}

\begin{case}
Assume $j - k$ is even.
\end{case}
We may assume $0 = i = j < k$, by shifting the labellings of $P_1$ and~$P_2$, and perhaps interchanging $y_j$ and~$y_k$.
Then here is an illustration of a second hamiltonian circle, which contradicts the uniqueness of~$C$:
\[ \begin{tikzpicture}
    \begin{scope}[scale=0.8]

    \draw[<-,>=latex',line width=0.5mm] (-0.5,1)--(2,1);
    \draw[<-,>=latex',line width=0.5mm] (-0.5,0)--(1,0);
    \draw[->,>=latex',line width=0.5mm] (11,1)--(12.5,1);
    \draw[->,>=latex',line width=0.5mm] (10,0)--(12.5,0);
    \draw[,line width=0.5mm] (1,0)--(11,1);

    \foreach \i in {2,...,10}{
    \draw[,line width=0.5mm] (\i,1)--(\i,0);
    }
    \foreach \i in {2,4,6,8}{
    \draw[,line width=0.5mm] (\i,0)--(\i+1,0);
    }
    \foreach \i in {3,5,7,9}{
    \draw[,line width=0.5mm] (\i,1)--(\i+1,1);
    }
    \foreach \i in {0,...,12}{
    \draw (\i,1) node [circle,fill, inner sep=2pt] {};
    \draw (\i,0) node [circle,fill, inner sep=2pt] {};

    }
    \end{scope}
    \end{tikzpicture}\]
   
\begin{case}
Assume there is a third edge from $x_0$ to~$P_2$. 
\end{case}
Let us say there is an edge from $x_0$ to~$y_\ell$.
Then two of $j,k,\ell$ have the same parity, so the previous case applies. 

\begin{case}
Assume that $j - k$ is odd, and that the degree of~$X$ is~$4$. 
\end{case}
Let $\ell = j - k$ and assume, without loss of generality, that $\ell>0$. 

We claim that $X \cong \cay(\Z; \pm 2, \pm \ell)$. To establish this, assume, without loss of generality, that $k = 0$, and define $\varphi \colon V(X) \to \Z$ by
    \[ \text{$\varphi(x_m) = 2m$ 
    \quad and \quad
    $\varphi(y_n) = 2n - \ell$} . \]
The neighbours of~$x_m$ in~$X$ are $x_{m\pm1}$, $y_m$, and $y_{m + \ell}$; note that the image of these vertices under~$\varphi$ are $2m \pm 2$ and $2m \pm \ell$, which are precisely the neighbours of $\varphi(x_m) = 2m$ in the Cayley graph. Similarly, the neighbours of~$y_n$ in~$X$ are $y_{n\pm1}$, $x_n$, and $x_{n - \ell}$; note that the image of these vertices under~$\varphi$ are $2n - \ell \pm 2$, $2n$, and $2n - 2\ell$, which are precisely the neighbours of $\varphi(y_n) = 2n - \ell$ in the Cayley graph. So $\varphi$ is an isomorphism. This completes the proof of the claim.

From the claim, we see that if $\ell = 1$, then $X$ is the square of a two-way-infinite path.

Therefore, we may assume $\ell > 1$. (Since $\ell$ is odd, this means $\ell \ge 3$.) 
	\begin{itemize}
	\item The subgraph induced by each coset of~$\langle \ell \rangle$ is a two-way-infinite path,
	and
	\item for each $v \in \Z$, the path $v, v + 2, \ldots, v + 2(\ell - 1)$ contains exactly one element of each coset.
	\end{itemize}
Therefore, it is not difficult to see (and is well known) that $X$ contains a spanning subgraph that is isomorphic to the cartesian product $P_\ell \mathbin{\square} P_\infty$. It is not difficult to see that this subgraph has a second hamiltonian circle (which contradicts the uniqueness of~$C$). 
One of the two-way-infinite paths in the hamiltonian circle is~$(\ell)^\infty$, and the other is of the form $(2^{\ell-2}, \ell, -2^{\ell-2}, \ell)^\infty$.
\Cref{Cay(Z;23)} is an illustration of the case where $\ell=5$.
\end{proof}

\begin{figure}[ht]
		\centering
		\begin{tikzpicture}
	
	\tikzmath{\n1 = 5; 
		\n2 = \n1 - 2;
		\n3 = \n1 - 3;
		}

        \draw[line width=0.8mm,<->,>=latex'] (-3.5,1)--(4.5,1);

        \foreach \i in {-\n3,...,0}{
        		\draw[line width=0.8mm,<->,>=latex',] (-3.5,\i)--(4.5,\i);
        		\draw[line width=0.5mm,<->,>=latex',white] (-3.45,\i)--(4.45,\i);
		}

        \foreach \i in {-3,...,4}{
        \draw[line width=0.8mm] (\i,-\n1 + 2)--(\i,0);
        }
        
        \draw[line width=0.8mm,->,>=latex'] (4,-\n2)--(4.5,-\n2);
        \draw[line width=0.8mm,<-,>=latex'] (-3.5,-\n2)--(-3,-\n2);
        
       \foreach \i in {-3,...,4}{
        \draw[line width=0.8mm] (\i,0)--(\i,1);
        \draw[line width=0.5mm, white] (\i,0)--(\i,1);
        }     
       \foreach \i in {1,...,4}{
        \draw[line width=0.8mm] (2*\i - 5, 0)--(2*\i-4,0);
        \draw[line width=0.8mm] (2*\i - 5, -\n2)--(2*\i-4,-\n2);
        \draw[line width=0.5mm, white] (2*\i - 5, -\n2)--(2*\i-4,-\n2);
        }
       \foreach \i in {1,...,3}{
        \draw[line width=0.8mm] (2*\i - 4, -\n2)--(2*\i-3,-\n2);
        }
        
        \foreach \i in {-3,...,4}
        \foreach \j in {-\n2,...,1}{
        \draw (\i,\j) node [circle,fill, inner sep=2pt] {};
        }
 
    \end{tikzpicture}
    \caption{The black edges form a hamiltonian circle in the spanning subgraph $P_5 \mathbin{\square} P_\infty$ of $\cay(\Z; \pm 2, \pm 5)$.}
    \label{Cay(Z;23)}
    \end{figure}
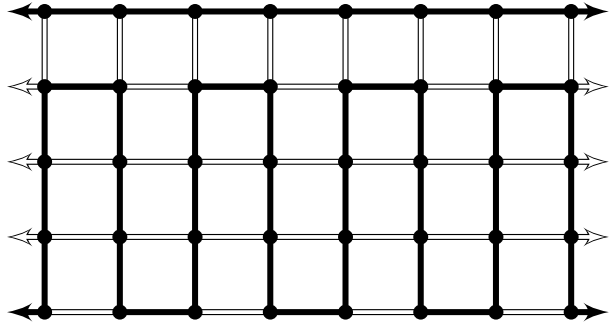

\begin{cor} \label{2endedCayUHC}
A Cayley graph $\cay(G;S)$ is two-ended and has a unique hamiltonian circle if and only if $G$ and~$S$ are in the following list \textup(up to a group isomorphism\textup{):}
	\begin{enumerate}
	\item square of the two-way-infinite path:
		\begin{enumerate}
		\item $G = \Z$ and $S = \{\pm 1, \pm 2\}$,
		\item $G = D_\infty$ and $S = \{a, b, ab, ba\} = \{a,b,(ab)^{\pm1}\}$, 
		\end{enumerate}
	\item two-way-infinite ladder:
		\begin{enumerate}
		\item $G = \Z \times \Z_2$ and $S = \{\pm (1,0), (0,1) \}$,
		\item $G = D_\infty$ and $S = \{a, ab, ba\} = \{a, (ab)^{\pm1}\}$,
		\item $G = D_\infty$ and $S = \{a, b, aba\}$,
		\item $G = D_\infty \times \Z_2$ and $S = \{(a,0), (b,0), (0,1)\}$, 
		\end{enumerate}
	\end{enumerate}
where $D_\infty = \langle \,a,b \mid a^2 = b^2 = 1 \,\rangle$, as in \fullcref{ZDNotation}{Dinfty}.
\end{cor}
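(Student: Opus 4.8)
The idea is to deduce the \lcnamecref{2endedCayUHC} from \cref{2endedUHC} together with Sabidussi's Theorem \pref{sabidussi}. By \cref{2endedUHC}, if $X = \cay(G;S)$ is two-ended and has a unique hamiltonian circle, then $X$ is isomorphic to either the square $P_\infty^2$ of the two-way-infinite path or the two-way-infinite ladder $L = P_\infty \mathbin{\square} K_2$. By \cref{sabidussi}, the Cayley-graph descriptions of a fixed graph~$X$ (as a Cayley graph of some group, up to group isomorphism) correspond to the conjugacy classes of subgroups of $\autt(X)$ that act sharply transitively on $V(X)$; for each such subgroup~$H$ one recovers a generating set by fixing a base vertex~$v_0$ and putting $S = \{\, h \in H : hv_0 \text{ is adjacent to } v_0 \,\}$. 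So the proof reduces to two tasks: compute $\autt(P_\infty^2)$ and $\autt(L)$, and then list their sharply transitive subgroups (up to conjugacy) together with the corresponding generating sets.

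\textbf{The automorphism groups.} In $P_\infty^2 = \cay(\Z;\pm1,\pm2)$ the ``short'' edges $\{i,i+1\}$ lie in exactly two triangles, while the ``long'' edges $\{i,i+2\}$ lie in exactly one; hence every automorphism preserves the spanning path, so $\autt(P_\infty^2) \subseteq \autt(P_\infty) = D_\infty$, and the reverse inclusion is clear (an automorphism of a graph is an automorphism of its square). Thus $\autt(P_\infty^2) = D_\infty$. In~$L$ the $4$-cycles are exactly the ``squares'', so the rung edges (those in two $4$-cycles) are distinguished from the rail edges (those in one); deleting the rungs leaves the two rails, and any automorphism either preserves each rail or swaps the two, inducing in each case an automorphism of the path obtained by contracting the rungs. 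This yields a split surjection $\autt(L) \to \autt(P_\infty) = D_\infty$ whose kernel is generated by the (central) rail-swap, so $\autt(L) \cong D_\infty \times \Z_2$.

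\textbf{Enumerating the sharply transitive subgroups.} Write $D_\infty = \langle t\rangle \rtimes \langle r\rangle$ with $t$ the translation by~$1$ and $r$ a reflection fixing the base vertex. A sharply transitive subgroup~$H$ of $\autt(P_\infty^2)=D_\infty$ must meet $\langle t\rangle$ in some $\langle t^k\rangle$ with $k\ge 1$; one checks that $k=1$ forces $H = \langle t\rangle \cong \Z$ (giving $S = \{\pm1,\pm2\}$), while $k\ge 2$ forces $k=2$ and $H = \langle t^2, r'\rangle \cong D_\infty$ for a fixed-point-free reflection~$r'$ (all such $H$ coincide), giving $S = \{a,b,ab,ba\}$ after choosing a base vertex. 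For $\autt(L) = D_\infty\times\Z_2$, let $N = \langle t\rangle\times\Z_2$ be the index-$2$ ``translation'' subgroup; it acts regularly (hence is itself the only sharply transitive subgroup contained in it), giving $G = \Z\times\Z_2$ with $S = \{\pm(1,0),(0,1)\}$. If $H\not\subseteq N$, then $H_1 := H\cap N$ has index~$2$ in~$H$, and transitivity of~$H$ forces $H_1$ to be one of the three index-$2$ subgroups of $N\cong\Z\times\Z_2$; analyzing the three possibilities (and checking that the phase ambiguities in the choice of reflection and of $\Z_2$-component do not change~$H$) yields, respectively, $\cay(D_\infty;\{a,ab,ba\})$, $\cay(D_\infty;\{a,b,aba\})$, and $\cay(D_\infty\times\Z_2;\{(a,0),(b,0),(0,1)\})$. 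Finally one checks the six listed pairs $(G,S)$ are pairwise non-isomorphic; in particular the two $D_\infty$-descriptions of~$L$ differ because any automorphism of~$D_\infty$ preserves the translation subgroup, and $\{a,ab,ba\}$ contains two translations while $\{a,b,aba\}$ consists of three reflections.

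\textbf{Main obstacle.} The conceptual work is done by \cref{2endedUHC} and Sabidussi's Theorem; the delicate part is the last step of the enumeration. For each sharply transitive subgroup one must identify it concretely with an abstract copy of $\Z$, $\Z\times\Z_2$, $D_\infty$, or $D_\infty\times\Z_2$, list which of its elements carry the base vertex to a neighbour, and then recognize the resulting set---after possibly applying an automorphism of the group---as one of the sets in the statement. Keeping track of which reflections of $D_\infty$ are fixed-point-free (equivalently, which order-$2$ elements can belong to a sharply transitive subgroup), and verifying that different natural choices of base vertex or of generating involutions give automorphism-equivalent connection sets, is where essentially all of the bookkeeping lies.
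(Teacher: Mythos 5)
Your proof is correct and follows essentially the same route as the paper: reduce to the two graphs via \cref{2endedUHC}, then enumerate the sharply transitive subgroups of their automorphism groups via Sabidussi's theorem, reading off the connection set from the neighbours of a base vertex. The only substantive differences are cosmetic --- you identify $\autt(X)$ by counting triangles and $4$-cycles rather than by invoking invariance of the unique hamiltonian circle, and the refinement of \cref{sabidussi} you use (descriptions up to group isomorphism correspond to conjugacy classes of sharply transitive subgroups) is really Babai's lemma, which the paper cites; conjugacy is moot here, as the paper observes, since every sharply transitive subgroup has index $2$ in $\autt(X)$ and is therefore normal.
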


\begin{proof}
By Sabidussi's Theorem~\pref{Sabidussi}, realizing a particular graph~$X$ as a Cayley graph amounts to finding a subgroup of $\aut$ that is sharply transitive on $V(X)$. More precisely, it follows from L.\,Babai's \cite[Lem.~3.1]{Babai1977} fundamental work on the {Cayley Isomorphism Problem} that determining the representations of~$X$ as a Cayley graph, up to group isomorphism, is equivalent to determining the sharply transitive subgroups of $\aut$, up to conjugacy in $\aut$. Therefore, in order to show that the list in the statement of the \lcnamecref{2endedCayUHC} is complete, it suffices to show that:
	\begin{enumerate}
	\item the automorphism group of the square of a two-way-infinite path has only two subgroups that are sharply transitive on the set of vertices,
	and
	\item the automorphism group of the two-way-infinite ladder has only four subgroups that are sharply transitive on the set of vertices.
	\end{enumerate}
(Let us point out that, although Babai's statement involves conjugacy classes of subgroups, rather than individual subgroups, conjugation is not an issue in the present situation. To see this, suppose that $H$ is a sharply transitive subgroup of $\aut$. Then, we have $\aut = HK$, where $K$ is the stabilizer of a vertex. Since the stabilizer of any vertex in $\aut$ has order $2$, this implies that the index of $H$ in $\aut$ is~$2$. Thus, every sharply transitive subgroup is normal, and therefore has no other conjugates.)

\refstepcounter{caseholder}

\begin{case}
Assume $X$ is the square of a two-way-infinite path $P_\infty = \cay(\Z; \pm 1)$.
\end{case}
Since the hamiltonian circle is unique, it is invariant under all automorphisms of~$X$. Therefore, $P_\infty$ is invariant under all of these automorphisms, since it is obtained from~$X$ by removing the edges of the hamiltonian circle. Hence, we have
	\[ \aut = \autt P_\infty = D_\infty = \langle\, a, b \mid a^2 = b^2 = 1 \,\rangle , \]
where $a$ and~$b$ act by reflections. We may assume that $a$ inverts the edge $0 \edge 1$ (and therefore does not fix any vertices), and that $b$ fixes the vertex~$0$.
Then the only sharply transitive subgroups of $\autt P_\infty$ are $\langle ab \rangle\cong \Z$ and $\langle a, bab \rangle\cong D_{\infty}$.
To see this, note that the abelianization of $G$ is $\mathbb Z_2\times \mathbb Z_2$ and so $G$ has three subgroups of index $2$.
However, the subgroup that contains $b$ has a nontrivial vertex stabilizer (since $b$ fixes a vertex), so it is not sharply transitive.

\begin{case}
Assume $X$ is the two-way-infinite ladder.
\end{case}
Let $V(X) = \{x_i\}_{i \in \Z} \cup \{y_i\}_{i \in \Z}$, where 
	\begin{itemize}
	\item the neighbours of $x_i$ are $x_{i + 1}$, $x_{i-1}$, and $y_i$,
	and
	\item the neighbours of $y_i$ are $y_{i + 1}$, $y_{i-1}$, and $x_i$.
	\end{itemize}
Then $\aut = D_\infty \times \Z_2$, where $D_\infty$ is the setwise stabilizer of~$\{x_i\}_{i \in \Z}$ and~$\{y_i\}_{i \in \Z}$, and $\Z_2$ interchanges these two sets. Assume that $a$ inverts the edge $x_0 \edge x_1$ (and therefore does not fix any vertices), and that $b$ fixes the vertex~$x_0$. Also, let $z$ be a generator of~$\Z_2$.

If $G$ is a sharply transitive subgroup of $\aut$, then it contains a subgroup of~$D_\infty$ that is sharply transitive on~$\{x_i\}_{i \in \Z}$. This subgroup must be either $\langle ab \rangle$ or $\langle a, bab \rangle$.
	\begin{itemize}
	\item If $G$ contains $\langle ab \rangle$, then $G$ is either $\langle ab, z \rangle$ or $\langle ab, az \rangle$. 
	\item If $G$ contains $\langle a, bab \rangle$, then $G$ is either $\langle a, bab, z \rangle$ or $\langle a, bab, bz \rangle$.
	\qedhere \end{itemize}
\end{proof}

\section{Legge's examples with infinitely many ends}\label{LeggeSect}

In this \lcnamecref{LeggeSect}, we present a rigorous construction of Legge's examples~\pref{LeggeEg} of uniquely hamiltonian vertex-transitive graphs with infinitely many ends.

\begin{observation}
Assume the notation of \cref{LeggeEg}, and let $v$ and~$w$ be two adjacent vertices of~$T_m$. It is clear that $T_m$ has:
    \begin{itemize}
    \item an automorphism $\alpha$ of order~$m$ that cyclically permutes the edges that are incident with~$v$,
    and
    \item an automorphism $\beta$ of order~$2$ that interchanges~$u$ and~$v$. 
    \end{itemize}
These naturally induce automorphisms $\widetilde\alpha$ and~$\widetilde\beta$ of~$X_m$.

The subgroup generated by $\widetilde\alpha$ and~$\widetilde\beta$ is the free product of $\langle\widetilde\alpha\rangle$ and $\langle\widetilde\beta\rangle$, and acts sharply transitively on~$V(X_m)$, so we see from Sabidussi's Theorem~\pref{Sabidussi} that $X_m$ is isomorphic to a Cayley graph of the free product $Z_m * Z_2$ of a cyclic group of order~$m$ and a cyclic group of order~$2$. 
\end{observation}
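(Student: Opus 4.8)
The plan is to realize $\langle\widetilde\alpha,\widetilde\beta\rangle$ as a sharply transitive subgroup of $\autt(X_m)$ that is isomorphic to $Z_m*Z_2$, and then quote Sabidussi's Theorem (\cref{sabidussi}). For the setup, take the rotation system of $T_m$ used to build $X_m$ to be the one coming from the radial planar drawing of the Bethe lattice that is symmetric about the vertex~$v$, let $\alpha$ be the planar rotation by $2\pi/m$ about~$v$, and let $\beta$ be the planar half-turn about the midpoint of the edge~$vw$. Both are orientation-preserving automorphisms of~$T_m$, of orders $m$ and~$2$, and they preserve the rotation system; hence each carries $C_u$ to $C_{\alpha(u)}$ (resp.\ $C_{\beta(u)}$) by a cycle isomorphism that respects the ``facing'' data, and so induces a graph automorphism $\widetilde\alpha$ (resp.\ $\widetilde\beta$) of~$X_m$ of the same order. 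Put $G=\langle\widetilde\alpha,\widetilde\beta\rangle$. Collapsing each $C_u$ to a point (and each adjacent pair $C_u,C_{u'}$ to an edge) recovers $T_m$ and gives a surjective homomorphism $q\colon G\to\langle\alpha,\beta\rangle$ with $q(\widetilde\alpha)=\alpha$, $q(\widetilde\beta)=\beta$; moreover $q$ is injective, since an element of $G$ that projects to $\mathrm{id}_{T_m}$ fixes all $m$ ``facing'' edges of every $C_u$, hence is the identity on every $C_u$ and so on $X_m$. By the universal property of the free product there is also a surjection $\rho\colon Z_m*Z_2\to G$ sending the generators to $\widetilde\alpha,\widetilde\beta$. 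Transitivity of $G$ on $V(X_m)$ is easy: $\alpha^j\beta$ sends $v$ to $\alpha^j(w)$, which runs over all neighbours of~$v$, so $\langle\alpha,\beta\rangle$ is transitive on $V(T_m)$ by induction along geodesics, and $\langle\widetilde\alpha\rangle$ rotates $C_v$ by one step and hence is transitive on its $m$ vertices.

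The main point is the free-product structure together with freeness of the action. Apply a Ping-Pong argument to the action of $\langle\alpha,\beta\rangle$ on the set $\partial T_m$ of ends of~$T_m$: deleting the edge~$vw$ splits $T_m$ into subtrees $A_v\ni v$ and $A_w\ni w$; set $E_\beta=\partial A_w$ and $E_\alpha=\partial A_v=\partial T_m\setminus E_\beta$. For $k\not\equiv0\pmod{m}$ the automorphism $\alpha^k$ fixes~$v$ and moves the edge~$vw$ to another edge at~$v$, so $\alpha^k(E_\beta)\subseteq E_\alpha$; and $\beta$ interchanges $A_v$ and~$A_w$, so $\beta(E_\alpha)=E_\beta$. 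Since $E_\alpha$ and $E_\beta$ are disjoint and nonempty and $|\langle\alpha\rangle|=m\ge3$, the Ping-Pong Lemma gives $\langle\alpha,\beta\rangle\cong Z_m*Z_2$; combined with the isomorphism~$q$, this yields $G\cong Z_m*Z_2$ (and then $\rho$ is an isomorphism as well). Next, a nontrivial reduced word in $Z_m*Z_2$ that involves the order-$2$ generator moves $v$ to a vertex at positive distance in $T_m$, so $\mathsf{Stab}_{\langle\alpha,\beta\rangle}(v)=\langle\alpha\rangle$ (equivalently, this is the vertex stabilizer read off the Bass--Serre tree of the splitting). Since $g\in G$ stabilizes $C_v$ setwise exactly when $q(g)$ fixes~$v$, it follows that $\mathsf{Stab}_G(C_v)=q^{-1}(\langle\alpha\rangle)=\langle\widetilde\alpha\rangle$, which acts on $C_v$ by rotations. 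Now suppose $g\in G$ fixes a vertex $u_0\in C_{v_0}$; conjugating, $g$ stabilizes $C_{v_0}$ setwise and acts on it as a rotation fixing~$u_0$, hence trivially. Then $g$ fixes every edge of $C_{v_0}$, and therefore, for each neighbour~$u'$ of~$v_0$ in $T_m$, the whole $4$-cycle joining $C_{v_0}$ to~$C_{u'}$; so $g$ fixes two adjacent vertices of~$C_{u'}$. As a nonidentity automorphism of an $m$-cycle with $m\ge3$ cannot fix two adjacent vertices, $g$ fixes $C_{u'}$ pointwise, and induction along $T_m$ gives $g=\mathrm{id}_{X_m}$. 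Hence $G$ acts freely on $V(X_m)$.

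Combining the last two facts, $G$ acts sharply transitively on $V(X_m)$, and $G\cong Z_m*Z_2$; so by Sabidussi's Theorem (\cref{sabidussi}), $X_m$ is isomorphic to a Cayley graph of $Z_m*Z_2$ (with a symmetric connection set of size~$4$, since $X_m$ is $4$-regular). The one genuinely substantive step is the middle one --- ruling out extra relations among $\widetilde\alpha$ and~$\widetilde\beta$ and showing the vertex stabilizers are no larger than expected --- which is where the Ping-Pong / Bass--Serre analysis of $\langle\alpha,\beta\rangle$ acting on~$T_m$ is needed; everything else is routine bookkeeping about the construction of~$X_m$.
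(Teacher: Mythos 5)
Your proposal is correct and follows exactly the route the paper intends (the paper states this as an Observation and omits all details): exhibit $\widetilde\alpha,\widetilde\beta$, show $\langle\widetilde\alpha,\widetilde\beta\rangle\cong Z_m*Z_2$ acts sharply transitively on $V(X_m)$, and invoke Sabidussi's Theorem. The details you supply --- the Ping-Pong argument on the ends of $T_m$ for the free-product structure, the identification of the stabilizer of $C_v$ with $\langle\widetilde\alpha\rangle$, and the rigidity argument showing the action on $V(X_m)$ is free --- are the standard and correct way to justify the claims the paper leaves unproved.
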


From this point of view, it is easy to generalize \cref{LeggeEg}, by replacing $2$ with any~$n$:

\begin{thm}[Legge, cf.\ \cref{LeggeEg}]\label{LeggeGeneral}
Fix $m \ge 3$ and $n \ge 2$, and let 
	\[ Z_m * Z_n = \langle\, a, b \mid a^m = b^n = 1 \,\rangle . \]
be the free product of a cyclic group of order~$m$ and a cyclic group of order~$n$.
Then the graph
    \[ X_{m,n} \coloneqq \cay \bigl( Z_m * Z_n; a^{\pm1}, (ab)^{\pm1} \bigr) \]
has a unique hamiltonian circle, namely, $\cay \bigl( Z_m * Z_n; (ab)^{\pm1} \bigr)$.
\end{thm}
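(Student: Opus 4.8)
The plan is to first verify that $C \coloneqq \cay\bigl(Z_m * Z_n; (ab)^{\pm 1}\bigr)$ is indeed a hamiltonian circle in $X_{m,n}$, and then to establish uniqueness by an edge-cut argument in the spirit of the informal discussion following \cref{LeggeEg}. For the first part, note that $ab$ has infinite order in $G \coloneqq Z_m * Z_n$ (it is a nontrivial reduced word that is not conjugate into either factor, hence not torsion), so $\langle ab \rangle \cong \Z$ and $C$ is a disjoint union of two-way-infinite paths; it is a $2$-factor because each vertex $g$ has exactly the two neighbours $g(ab)^{\pm 1}$ in $C$. To see it is a single two-way-infinite path (equivalently, that $\langle ab\rangle$ acts transitively — which it does not, so $C$ is in fact a disjoint union of copies of $P_\infty$ indexed by the cosets $G/\langle ab\rangle$), I will instead check the two conditions of \cref{2factor}: every finite edge cut meets $C$ in a positive even number of edges, and every pair of edges of $C$ is separated by a finite edge cut. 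Geometrically, $G$ acts on its Bass–Serre tree $T$ (with vertex stabilizers the conjugates of $\langle a\rangle$ and $\langle b\rangle$), and $X_{m,n}$ maps onto $T$; an edge of $T$ corresponds to the pair of parallel edges coming from a generator, and removing such a pair disconnects $X_{m,n}$ into two pieces, each a union of cosets. The condition \pref{2factor-2edges} follows because any two edges of $C$ lie on opposite sides of some tree edge. The condition \pref{2factor-even} follows because a finite edge cut of $X_{m,n}$ corresponds to a finite set of tree edges separating $T$, and across each such tree edge $C$ must cross an even number of times (it has to come back).

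For uniqueness, I would argue exactly as sketched in \cref{LeggeEg}. Let $C'$ be any hamiltonian circle in $X_{m,n}$. For each of the ``tree edges'' of $T$ — i.e.\ each pair $\{e_1, e_2\}$ of parallel edges of $X_{m,n}$ arising from a single occurrence of a generator $a^{\pm1}$ (or $b^{\pm1}$, but note $b$ is realized in $X_{m,n}$ only through the generator $ab$; more precisely, through the $m$-cycle structure) — removing $\{e_1, e_2\}$ from $X_{m,n}$ disconnects it into two infinite pieces. Hence $\{e_1, e_2\}$ is a finite edge cut, so by \fullcref{2factor}{even} it contains a positive even number of edges of $C'$; since it has only two edges, it contains either zero or two. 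If it contained zero, then $C'$ would be entirely contained in one of the two infinite pieces, contradicting the fact that $C'$ is spanning (the other piece is nonempty). Therefore $C'$ contains both $e_1$ and $e_2$. Ranging over all such tree edges, $C'$ contains every edge of $X_{m,n}$ that comes from an edge of the Bass–Serre tree. But these edges already form the $2$-factor $C$ (each vertex $g$ lies on exactly one $m$-cycle and has exactly two such tree-edges incident to it, namely $g \edge g(ab)$ and $g \edge g(ab)^{-1}$). Since a hamiltonian circle is a $2$-factor, $C' \supseteq C$ forces $C' = C$.

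The point I would need to pin down carefully — and where I expect the only real friction — is the precise dictionary between the combinatorial picture of $X_{m,n}$ (``$m$-cycles glued along doubled edges following the pattern of $T_m$'') and the algebraic description $\cay\bigl(G; a^{\pm1}, (ab)^{\pm1}\bigr)$, together with the identification of which edges of $X_{m,n}$ ``correspond to a tree edge.'' Concretely: the edges labelled $(ab)^{\pm1}$ are the ones that should be the hamiltonian circle, and the edges labelled $a^{\pm1}$ should be the ones forming the $m$-cycles $C_v$ — but the doubled edges of the original Legge construction are pairs of $(ab)$-edges that both cross the same edge of the Bass–Serre tree, and I must check that a single $(ab)$-edge is \emph{not} itself a cut (only the pair is). Equivalently I must check that the Bass–Serre tree edge is crossed by exactly two $X_{m,n}$-edges, both of which are $(ab)$-edges. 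Once this correspondence is made precise, I would also want to record that $X_{m,n}$ has infinitely many ends (immediate, since it quasi-surjects onto the tree $T$, which has infinitely many ends whenever $m \ge 3$ or $n \ge 3$; for $m = n = 2$ the group is $D_\infty$ and the statement is vacuous or degenerate, but the hypothesis $m \ge 3$ rules this out), so that the notion of hamiltonian circle is not simply that of a two-way-infinite hamiltonian path. Everything else is a routine unwinding of \cref{2factor}.
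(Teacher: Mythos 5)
Your uniqueness argument is essentially the paper's: exhibit a two-element edge cut consisting of two $(ab)$-edges, apply \fullcref{2factor}{even} to force both of its edges into any hamiltonian circle, and propagate to all of $C$ by symmetry (the paper uses left translation of the single explicit cut $\{a^{-1}\edge b,\ ba^{-1}\edge b^2\}$ rather than indexing cuts by Bass--Serre tree edges, but that is cosmetic). That half is sound once you do the bookkeeping you yourself flag.

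The gap is in the existence half. The paper proves existence by showing, by induction on $r$, that the quotient $C/{\requiv{r}}$ of \cref{requivDef} is a cycle for every $r$, and then both conditions of \cref{2factor} follow via \cref{compactness}. You instead try to verify those conditions directly, and your justification of condition \pref{2factor-2edges} does not work as stated: for distinct $e_1,e_2\in E(C)$ you must produce a finite edge cut $\edgecut$ of $X_{m,n}$ with $E(C)\cap\edgecut=\{e_1,e_2\}$ exactly, whereas the observation that ``$e_1$ and $e_2$ lie on opposite sides of some tree edge'' yields a cut meeting $C$ in the two \emph{other} edges crossing that tree edge, not in $\{e_1,e_2\}$. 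Since $C$ is a disjoint union of infinitely many two-way-infinite paths (the cosets of $\langle ab\rangle$), the easy case is $e_1,e_2$ on the same path (take $A$ to be the finite middle segment); the hard case is $e_1,e_2$ on different paths, where one must construct an infinite set $A$ with finite boundary that meets each of those two paths in a single ray and contains all or none of every other path --- essentially a halfspace of the tree, which is exactly the structure the paper's cycle-quotient induction encodes. Similarly, your argument for \pref{2factor-even} only addresses edge cuts arising from single tree edges, but the condition must hold for every finite edge cut (for an infinite, co-infinite $A$ the parity of $|\bdry(A)\cap E(C)|$ depends on how the ends of the constituent paths distribute over the two sides, and this needs a genuine argument). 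Either carry out these constructions honestly, or switch to the paper's route of showing each $C/{\requiv{r}}$ is a cycle.
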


Before proving this \lcnamecref{LeggeGeneral}, let us recall that a standard technique \cite[p.~1436]{MR2800969} for studying hamiltonian circles in an infinite graph~$X$ is to make a list $x_0, x_1, \ldots$ of the vertices of~$X$, and use this to construct (for each~$n$) a minor of~$X$, by contracting each component of $X \smallsetminus \{x_0, \ldots, x_n\}$ to a vertex. (Any loops that arise in the contraction are deleted, but all multiple edges are retained.) For the study of Cayley graphs of $Z_m * Z_m$, it is more convenient to slightly modify this construction:

\begin{definition} \label{requivDef}
For each $r \in \N^+$:
\begin{enumerate}
    \item We define an equivalence relation $\requiv{r}$ on $Z_m * Z_n$ by declaring that two reduced words are equivalent if they agree up to (and including) the $r$th $b$-string (or they are equal, and have less than~$r$ $b$-strings). 
    The equivalence class of a word $w \in Z_m * Z_n$ is denoted by $[w]_r$.
    \item For a symmetric subset~$S$ of~$Z_m * Z_n$, the quotient graph $\cay ( Z_m * Z_n; S)/{\requiv{r}}$ is obtained from $\cay ( Z_m * Z_n; S)$ by identifying all vertices that are in the same equivalence class. The quotient graph is allowed to have multiple edges, but we remove all loops.
\end{enumerate}
\end{definition}

The following crucial observation is elementary:

\begin{lem}[cf.~{\cite[p.~1436]{MR2800969}}]\label{compactness}
The edges in any finite edge cut of\/ $\cay(Z_m * Z_n;S)$ form an edge cut of\/ $\cay(Z_m * Z_n;S)/{\requiv{r}}$, for all sufficiently large~$r$. Conversely, the edges in any edge cut of\/ $\cay(Z_m * Z_n;S)/{\requiv{r}}$ form a finite edge cut of $\cay(Z_m * Z_n; S)$.
\end{lem}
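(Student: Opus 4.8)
The plan is to prove the two halves of the \lcnamecref{compactness} by bookkeeping about $b$-strings, with a single substantive ingredient: every $\requiv{r}$-equivalence class induces a connected subgraph of the Cayley graph. Write $X = \cay(Z_m * Z_n; S)$, let $Q_r = X/{\requiv{r}}$, and let $\pi\colon V(X) \to V(Q_r)$ be the quotient map. The first point to record is that $Q_r$ is a \emph{finite} multigraph: there are only finitely many reduced words with fewer than~$r$ $b$-strings (such a word has bounded length, at most $(r-1)(n-1)+r(m-1)$), and only finitely many reduced words that can occur as ``the prefix of~$w$ up to and including its $r$th $b$-string'', and every $\requiv{r}$-class is either a singleton of the first kind or is determined by a prefix of the second kind. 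Also note that, since parallel edges are retained in the quotient and only loops are discarded, there is a natural bijection between $E(Q_r)$ and the set of edges of~$X$ whose two endpoints lie in different $\requiv{r}$-classes; this is the bijection implicit in the phrase ``the edges in an edge cut of~$Q_r$''.

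The key lemma I would isolate is: \emph{each $\requiv{r}$-class induces a connected subgraph of~$X$.} This is vacuous for a singleton class, so let $[w]_r$ be a class with $w$ having at least~$r$ $b$-strings, and let $p$ be the prefix of~$w$ through its $r$th $b$-string. Then $[w]_r = pW$, where $W$ is the set of reduced words that are trivial or whose reduced form does not begin with a power of~$b$; since left translation by~$p^{-1}$ is a graph automorphism, it suffices to show the induced subgraph $X[W]$ is connected. I would establish this using the retraction $\rho\colon V(X) \to W$ that deletes the leading power of~$b$ from a word that has one (and is the identity on~$W$): a short case check over $s \in \{a^{\pm1}, (ab)^{\pm1}\}$ shows $\rho$ is a graph homomorphism onto $X[W]$ (intuitively $\rho$ is the closest-point retraction onto the ``half'' of~$X$ corresponding to~$W$ in the Bass--Serre tree of $Z_m * Z_n$, hence $1$-Lipschitz); since $\rho$ fixes~$W$ pointwise and $X$ is connected, $X[W] = \rho(X)$ is connected. \textbf{This case check is the main obstacle.} An alternative that avoids checking all of~$\rho$ is to observe that $a^{\pm1}$-edges between vertices of~$W$ stay in~$W$ (so each $\langle a\rangle$-coset contained in~$W$ is connected in $X[W]$) and then to join those cosets by $(ab)^{\pm1}$-edges, inducting on the syllable length of a coset representative.

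For the converse, I would argue as follows. Suppose $\edgecut = \bdry_{Q_r}(\bar A)$ for some nonempty proper $\bar A \subseteq V(Q_r)$; then $\edgecut$ is finite because $Q_r$ is finite. Put $A = \pi^{-1}(\bar A)$, a nonempty proper subset of $V(X)$ that is a union of $\requiv{r}$-classes. An edge $\{u,v\}$ of~$X$ with $u \requiv{r} v$ becomes a loop in~$Q_r$ and has both endpoints on the same side of~$A$, so it lies in neither $\edgecut$ nor $\bdry_X(A)$; an edge with $u \not\requiv{r} v$ corresponds to an edge of~$Q_r$ and lies in $\bdry_X(A)$ exactly when it lies in $\bdry_{Q_r}(\bar A)$, since $A$ is saturated. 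Hence $\bdry_X(A)$ is a finite edge cut of~$X$ whose edges are precisely those of~$\edgecut$.

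For the forward direction, let $\edgecut = \bdry_X(A)$ be a finite edge cut of~$X$, and choose~$r$ so large that every endpoint of every edge of~$\edgecut$ is a reduced word with fewer than~$r$ $b$-strings (possible since $\edgecut$ has finitely many endpoints). Then no edge of~$\edgecut$ has an endpoint with at least~$r$ $b$-strings, and in particular every edge of~$\edgecut$ joins two distinct singleton classes and so is a genuine edge of~$Q_r$. I claim $A$ is a union of $\requiv{r}$-classes: otherwise some class~$\kappa$ meets both $A$ and its complement; $\kappa$ cannot be a singleton, so every element of~$\kappa$ has at least~$r$ $b$-strings, and by the key lemma $X[\kappa]$ is connected, so there is a path inside~$\kappa$ from~$A$ to its complement. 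That path uses an edge of $\bdry_X(A) = \edgecut$ whose endpoints both have at least~$r$ $b$-strings, contradicting the choice of~$r$. So $A$ is saturated, and with $\bar A = \pi(A)$ the computation from the converse direction shows $\bdry_{Q_r}(\bar A)$ is an edge cut of~$Q_r$ whose edges are exactly those of~$\edgecut$, as desired.
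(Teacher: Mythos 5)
The paper states this \lcnamecref{compactness} without proof, so there is nothing to compare against; your argument is correct in substance and isolates the right crux, namely that every $\requiv{r}$-class induces a connected subgraph of $\cay(Z_m*Z_n;S)$ (without this, the forward direction genuinely fails: a finite cut could split a class, and then no saturated set~$A$ exists). Two points need tightening. First, the ``short case check'' you defer is fine for $S=\{a^{\pm1},(ab)^{\pm1}\}$ --- one verifies that for $u\in W$ and $s\in S$ either $us\in W$ (so $\rho(g)s=\rho(gs)$) or $u\in\{1,a^{-1}\}$ and $\rho(g),\rho(gs)$ are joined by an $a^{\pm1}$-edge --- but it is specific to that generating set: the retraction $\rho$ is \emph{not} a graph homomorphism for an arbitrary symmetric~$S$, and indeed the forward direction of the \lcnamecref{compactness} as literally stated can fail for non-generating~$S$ (e.g.\ $S=\{(ab)^{\pm1}\}$ gives a disconnected Cayley graph with $\varnothing$ as a finite edge cut, while the quotient is a cycle). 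Since the paper only ever applies the forward direction to $X_{m,n}$ itself, your proof covers what is needed, but you should say explicitly that your connectivity lemma is being proved for that particular~$S$. Second, for the converse direction you need $\bdry_{Q_r}(\bar A)$ to be \emph{finite}, and you justify ``$Q_r$ is a finite multigraph'' only by counting vertices; you must also bound the edge multiplicities. This is easy but not automatic: an edge of $X$ leaving the class $pW$ has the form $pu \edge pus$ with $u\in W$ and $us\notin W$, which forces the cancellation in $us$ to reach the first syllable of~$u$ and hence $\ell(u)\le\ell(s)$, so each class meets only finitely many non-loop edges. With these two repairs the proof is complete.
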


\begin{proof}[{\bf Proof of \cref{LeggeGeneral}}]
For convenience, let $C = \cay \bigl( Z_m * Z_n; (ab)^{\pm1} \bigr)$.

\medbreak

(uniqueness)
Let $B$ be the set of reduced words that start with a single~$b$ (so either the word is~$b$, or it starts with $ba^k$ for some $k \neq 0$), and let $\overline{B}$ be the complement. Then the only edges of~$X_{m,n}$ that join a vertex of~$B$ to a vertex of~$\overline{B}$ are $b \stackrel{ab}{\from} a^{-1}$ and $ba^{-1} \stackrel{ab} \to b^2$. So these two edges form a finite edge cut. Therefore, every hamiltonian circle must contain both of these edges (see \fullcref{2factor}{even}).

Since translation on the left is an automorphism of the Cayley graph, this implies that every hamiltonian circle contains every left-translate of the edge $ba^{-1} \stackrel{ab} \to b^2$, which means that every hamiltonian circle contains $g \stackrel{ab} \to g \cdot a b$, for every~$g$. Thus, every hamiltonian circle contains the $2$-factor~$C$, and must therefore be equal to this $2$-factor (see \cref{2factor}).
.
\medbreak

(existence)
We prove by induction that the quotient graph $C / {\requiv{r}}$ is a cycle for all~$r$; then \cref{2factor} finishes the proof. For convenience, we will use $\to$ to denote $\stackrel{ab}{\to}$.

\emph{Base case.} For $r = 1$, we have
\begin{align*}
&[1]_1 
		\to [ab]_1=[aba^{-1}]_1 
		\to [ab^2]_1 =[ab^2a^{-1}]_1
		\to \cdots  
        \to [ab^{n-1}]_1 =[ab^{n-1}a^{-1}]_1
        \\
    \to ~&[a]_1\to [a^2 b]_1=[a^2ba^{-1}]_1 \to [a^2b^2]_1
        \to \cdots 
    \to [a^2b^{n-1}]_1 = [a^2b^{n-1}a^{-1}]_1 
    \\
    & \vdots \\
     \to ~&[a^{m-1}]_1 \to [b]_1= [ba^{-1}]_1 \to [b^2]_1= [b^2a^{-1}]_1 \to \cdots  
		\to [b^{n-1}]_1= [b^{n-1}a^{-1}]_1  
    \\
    \to ~& [1]_1
    . \end{align*}
So $C / {\requiv{1}}$ is a cycle.

\emph{Induction step.} Since no vertex of $X / {\requiv{r+1}}$ can have degree~$> 2$, it suffices to show that each vertex in $X / {\requiv{r}}$ expands to a path in $X / {\requiv{r+1}}$.

Assume $v$ is a reduced word that has exactly~$r$ $b$-strings (and ends with~$b$, so $v$ is the shortest word in its $r$-equivalence class). 
Then, for all $i$, we have
    \[ [va^i]_{r+1} \to [va^{i+1} b]_{r+1} \]
and, for all nonzero $i$ and~$j$, we have
    \[ [va^i b^j]_{r+1} = [va^i b^ja^{-1}]_{r+1} \to [va^i b^{j+1}]_{r+1} . \]
Therefore, by the same calculation as above, but starting at~$v$, instead of at~$1$, and stopping at $v a^{m-1}$, because the next term $vb$ has a different $r$th $b$-string, we have:
	\begin{align*}
		 [v]_{r+1} 
		\to ~&[vab]_{r+1} 
		\to [vab^2]_{r+1} 
		\to \cdots  
		\to [vab^{n-1}]_{r+1} 
        \to [va]_{r+1}
    \\ \to ~&[va^2 b ]_{r+1}
        \to [va^2 b^2 ]_{r+1}
		\to \cdots  
		\to [va^2b^{n-1}]_{r+1}
        \to [va^2]_{r+1}
        \\ & \vdots
        \\ \to ~& [va^{m-1} b ]_{r+1}
            \to [va^{m-1} b^2 ]_{r+1}
            \cdots
        \to [va^{m-1} b^{n-1}]_{r+1}
        \to [va^{m-1}]_{r+1}
		. \qedhere \end{align*}
\end{proof}

\section{Some hamiltonian circles in Cayley graphs of free groups} \label{FreeSect}

This \lcnamecref{FreeSect} investigates hamiltonicity and unique hamiltonicity of Cayley graphs of nonabelian free groups. 
(The main objective is to prove \cref{FreeUHCEg}, but we also establish \cref{VirtFree} and some results that were not stated in the Introduction.)
It will be helpful to fix some notation.

\begin{notation} \label{FnNotation}
In this \lcnamecref{FreeSect}:
    \begin{enumerate}
    \item $F_n$ is the free group of rank~$n$.
    \item We assume $F_n$ is nonabelian (i.e., $n \ge 2$).
    \item $A = \{a_1, \ldots, a_n\}$ is the standard generating set of~$F_n$. Any element of $A^{\pm1}$ is called a \emph{letter}. Thus, if $s \in F_n$ and $s = s_1 \cdots s_\ell$ is the representation of~$s$ as a reduced word, then $s_1$ is the \emph{first letter} of~$s$, and $s_\ell$ is the \emph{last letter} of~$s$.
    \item $\ell(g)$ is the \emph{word length} of an element~$g$ of~$F_n$. Thus, if $g = s_1 \cdots s_r$ is the representation of~$g$ as a reduced word, then $\ell(g) = r$.
    \item For $a \in A^{\pm1}$ and $g \in F_n$, we let $\#^{\pm}_a(g)$ be the number of occurrences of~$a$, plus the number of occurrences of~$a^{-1}$, in the representation of~$g$ as a reduced word.
    \item The symbol~$S$ always represents a symmetric, finite generating set of~$F_n$.
    \end{enumerate}
\end{notation}

\begin{lem} \label{MustHaveAll}
If $s \in F_{n-1}$, then there does not exist a symmetric, finite generating set~$S$ of~$F_n$, such that $\cay(F_n; s^{\pm1})$ is a hamiltonian circle in $\cay(F_n; S)$.
\end{lem}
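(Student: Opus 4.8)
The plan is to exploit the tree-like structure of $\cay(F_n; A^{\pm 1})$ together with the edge-cut characterization of hamiltonian circles in \cref{2factor}. Suppose for contradiction that $s \in F_{n-1} = \langle a_1, \dots, a_{n-1}\rangle$ and that $S$ is a symmetric finite generating set with $\cay(F_n; s^{\pm1})$ a hamiltonian circle in $\cay(F_n; S)$. The key point is that the letter $a_n$ does not occur in $s$, so removing all edges of $\cay(F_n; S)$ labeled by elements of $S$ that ``use'' $a_n$ cuts the graph along the coset structure of $F_{n-1}$, but the $2$-factor $\cay(F_n; s^{\pm1})$ lies entirely inside the cosets of $F_{n-1}$ and therefore crosses none of these cuts.

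First I would make precise the relevant edge cut. For $g \in F_n$ whose reduced word ends in a letter other than $a_n^{\pm1}$, let $A_g$ be the set of elements of $F_n$ whose reduced word has $g$ as a prefix followed (if nonempty) by $a_n$ or a letter other than $a_n^{-1}$ — i.e., the subtree ``hanging off $g$ in the $a_n$-direction.'' Equivalently, pick a coset decomposition witnessing that $\langle\!\langle a_n \rangle\!\rangle$-type separation splits $F_n$ into pieces each contained in a single coset of $F_{n-1}$ or ``between'' such cosets. The cleanest route: since $S$ is finite, there is a finite subtree $T$ of $\cay(F_n; A^{\pm1})$ containing $1$ such that every $s' \in S$ labels an edge $g \edge gs'$ with the geodesic from $g$ to $gs'$ fitting inside a bounded neighborhood; use $T$ to produce a finite set $K$ of vertices so that $\cay(F_n; S)\smallsetminus K$ separates two cosets $F_{n-1}$ and $a_n F_{n-1}$ (this uses that $\langle A^{\pm1}\rangle$ is a tree and $S$ generates, so the word metrics are quasi-isometric as in the proof of \cref{ChangeS}). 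This gives a \emph{finite} edge cut $\edgecut$ of $\cay(F_n; S)$ with the two sides being a union of $F_{n-1}$-cosets.

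Now the contradiction: because $s \in F_{n-1}$, left multiplication by $s^{\pm1}$ preserves every coset $g F_{n-1}$. Hence no edge of the $2$-factor $C = \cay(F_n; s^{\pm1})$ — each of which has the form $g \edge gs$ with $g$ and $gs$ in the same $F_{n-1}$-coset — can cross $\edgecut$. So $E(C) \cap \edgecut = \varnothing$, contradicting \fullcref{2factor}{even}, which requires every finite edge cut to contain a \emph{positive} even number of edges of $C$.

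The main obstacle is the first step: rigorously extracting, from an arbitrary finite symmetric generating set $S$, a \emph{finite} edge cut of $\cay(F_n; S)$ that separates along cosets of $F_{n-1}$. The subtlety is that an element $s' \in S$ might itself be a long word using $a_n$ many times, so a single generator can jump between many cosets; one must choose the separating vertex set $K$ large enough (bounded by $\max_{s'\in S}\ell(s')$) to catch all such crossings, and argue that the resulting cut is still finite — which follows since $K$ is finite and $\cay(F_n;S)$ is locally finite. Once the existence of such a cut is in hand, the parity argument via \cref{2factor} is immediate.
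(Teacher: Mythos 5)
Your overall strategy --- produce a finite edge cut of $\cay(F_n;S)$ that is disjoint from the $2$-factor $C=\cay(F_n;s^{\pm1})$ and invoke \fullcref{2factor}{even} --- is exactly the mechanism the paper uses, and the parity contradiction at the end is fine. The gap is in the construction of the cut, which you yourself flag as ``the main obstacle.'' As described, the construction is internally inconsistent: you want to delete a \emph{finite} vertex set $K$ and have the resulting components be unions of $F_{n-1}$-cosets, but every coset of $F_{n-1}$ is infinite, so any nonempty finite $K$ meets some coset without containing it, and that coset is then split between $K$ and the components. Consequently the edge cut around a component contains edges running from the component into $K$, and nothing in your setup rules out such an edge having the form $g \edge gs$ with $g$ and $gs$ in the same coset; so the key claim $E(C)\cap\edgecut=\varnothing$ does not follow from the construction as written. (Note also that the naive choice of side $A=F_{n-1}$ fails outright: if some $s'\in S$ involves $a_n$, then $\bdry(F_{n-1})$ contains the infinitely many edges $g\edge gs'$ for $g\in F_{n-1}$, so that cut is not finite.)

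The repair is easy, and the paper makes the whole lemma a three-line argument using a tool you already cite. By \cref{ChangeS} one may assume $S=A^{\pm1}\cup\{s^{\pm1}\}$; then, because $s\in F_{n-1}$, the single edge $1\edge a_n$ is by itself a finite edge cut of $\cay(F_n;S)$, and a one-edge cut can never contain a positive even number of edges of \emph{any} subgraph, so \fullcref{2factor}{even} already gives the contradiction --- no coset bookkeeping is needed. If you prefer not to reduce the generating set, the correct version of your direct argument is to take the side of the cut to be $B$, the set of reduced words beginning with $a_n$ (which \emph{is} a union of right cosets $gF_{n-1}$, so $E(C)\cap\bdry_{X_0}(B)=\varnothing$ for $X_0=\cay(F_n;S)$), and to deduce finiteness of $\bdry_{X_0}(B)$ from the quasi-isometry argument in the proof of \cref{ChangeS}, since $\bdry(B)$ is the single edge $1\edge a_n$ in $\cay\bigl(F_n;A^{\pm1}\cup\{s^{\pm1}\}\bigr)$.
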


\begin{proof}
By \cref{ChangeS}, we may assume that $S = A^{\pm1} \cup \{s^{\pm1}\}$. Then, since $s \in F_{n-1}$, the edge $1 \edge a_n$ is an edge cut of $\cay(F_n; S)$. This edge cut has only one edge, so it is impossible for any subgraph to meet it in a positive, even number of edges. Therefore, we see from \fullcref{2factor}{even} that the Cayley graph does not have a hamiltonian circle.
\end{proof}

\begin{question}
The proof of \cref{MustHaveAll} shows that if some automorphism of~$F_n$ takes $s$ to an element of~$F_{n-1}$, then the closure of $\cay(F_n; s^{\pm1})$ in $|\cay(F_n; S)|$ is not connected. Is the converse true? (By an argument similar to the proof of \fullcref{F_2}{n=2}, it can be shown that the converse is indeed true when $n = 2$.)
\end{question}

It is also important to know a few automorphisms of~$F_n$:

\begin{lem}[cf.\ \cite{AutFn}] \label{AutFn}
\leavevmode
\begin{enumerate}
    \item \label{AutFn-perm}
    If $\sigma$ is any permutation of $\{1,\ldots, n\}$, then there is an automorphism~$\varphi$ of~$F_n$, such that $\varphi(a_i) = a_{\sigma(i)}$ for all~$i$.
    \item \label{AutFn-sign}
    if $\epsilon_1, \ldots, \epsilon_n \in \{\pm1\}$, then there is an automorphism~$\varphi$ of~$F_n$, such that $\varphi(a_i) = a_i^{\epsilon_i}$ for all~$i$.
    \item \label{AutFn-mult}
    For all $g,h \in F_{n-1}$, there is an automorphism $\varphi$ of~$F_n$, such that $\varphi(a_n) = g a_n h$, and $\varphi(a_i) = a_i$ for $1 \le i \le n-1$.
\end{enumerate}
\end{lem}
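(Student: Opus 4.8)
The plan is to invoke the universal property of the free group: any function from the free generating set $\{a_1,\dots,a_n\}$ into $F_n$ extends uniquely to an endomorphism of~$F_n$, and to check that such an endomorphism is an automorphism it suffices to produce a two-sided inverse, which (again by the universal property) need only be specified on the generators. So in each of the three cases I would write down the obvious candidate inverse as a substitution on generators and verify that the two composites act as the identity on every $a_i$, hence coincide with the identity endomorphism.

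Concretely: for \pref{AutFn-perm}, let $\varphi$ be the endomorphism with $\varphi(a_i)=a_{\sigma(i)}$ and let $\psi$ be the one with $\psi(a_i)=a_{\sigma^{-1}(i)}$; both $\psi\varphi$ and $\varphi\psi$ fix each generator. For \pref{AutFn-sign}, the endomorphism with $\varphi(a_i)=a_i^{\epsilon_i}$ satisfies $\varphi(\varphi(a_i))=a_i^{\epsilon_i^2}=a_i$ since $\epsilon_i=\pm1$, so $\varphi$ is its own inverse. For \pref{AutFn-mult}, let $\varphi$ fix $a_1,\dots,a_{n-1}$ and send $a_n\mapsto g a_n h$, and let $\psi$ fix $a_1,\dots,a_{n-1}$ and send $a_n\mapsto g^{-1}a_n h^{-1}$; the point is that $\varphi$ and $\psi$ restrict to the identity on $F_{n-1}=\langle a_1,\dots,a_{n-1}\rangle$, so $\psi(g)=g$ and $\psi(h)=h$, whence $\psi(\varphi(a_n))=\psi(g)\,\psi(a_n)\,\psi(h)=g\,(g^{-1}a_n h^{-1})\,h=a_n$, and symmetrically for $\varphi\psi$.

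There is no real obstacle here; the statement just repackages the standard elementary Nielsen automorphisms (permutations of generators, inversions of generators, and left/right transvections), and the entire argument is a short verification via the universal property. The only subtlety I would take care to flag in the write-up is that part \pref{AutFn-mult} genuinely uses the hypothesis $g,h\in F_{n-1}$: it is precisely because $\varphi$ and $\psi$ fix $a_1,\dots,a_{n-1}$ pointwise that the cancellation $\psi(g)\,\psi(a_n)\,\psi(h)=a_n$ goes through, and this would fail if $g$ or~$h$ were allowed to involve~$a_n$.
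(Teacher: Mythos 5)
Your proof is correct. The paper itself gives no proof of this lemma --- it simply cites a reference --- and your argument (extend each substitution on generators to an endomorphism via the universal property, then exhibit the explicit inverse substitution, using for part (3) that the inverse fixes $F_{n-1}$ pointwise so that $g$ and $h$ cancel) is the standard verification that these are the elementary Nielsen automorphisms.
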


\begin{cor}
If $s \in S$, such that\/ $\cay(F_n; s^{\pm1})$ is a hamiltonian circle in the Cayley graph\/ $\cay(F_n; S)$, then $\#^{\pm}_{a_i}(s) \ge 2$ for $1 \le i \le n$.
\end{cor}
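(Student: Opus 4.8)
The plan is to argue by contradiction, reducing everything to \cref{MustHaveAll}. Suppose $\#^{\pm}_{a_i}(s) \le 1$ for some~$i$. I would produce a group automorphism~$\varphi$ of~$F_n$ carrying~$s$ into the subgroup $F_{n-1} = \langle a_1, \dots, a_{n-1}\rangle$. Since~$\varphi$ induces a graph isomorphism from $\cay(F_n; S)$ onto $\cay\bigl(F_n; \varphi(S)\bigr)$ that carries $\cay(F_n; s^{\pm1})$ onto $\cay\bigl(F_n; \varphi(s)^{\pm1}\bigr)$, and being a hamiltonian circle is a purely graph-theoretic property (cf.\ the first sentence of the proof of \cref{ChangeS}), this would exhibit $\cay\bigl(F_n; \varphi(s)^{\pm1}\bigr)$ as a hamiltonian circle in $\cay\bigl(F_n; \varphi(S)\bigr)$ with $\varphi(s) \in F_{n-1}$, contradicting \cref{MustHaveAll}.

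To construct~$\varphi$, I would split into two cases according to the value of $\#^{\pm}_{a_i}(s)$. If $\#^{\pm}_{a_i}(s) = 0$, then~$s$ already lies in the subgroup generated by $A \smallsetminus \{a_i\}$, so the permutation automorphism of \fullcref{AutFn}{perm} associated to the transposition of~$i$ and~$n$ carries~$s$ into~$F_{n-1}$, and we are done. If $\#^{\pm}_{a_i}(s) = 1$, write the reduced word for~$s$ as $s = g\, a_i^{\varepsilon}\, h$, where $\varepsilon \in \{\pm1\}$ and neither~$g$ nor~$h$ contains an occurrence of $a_i^{\pm1}$ (so this factorization is forced). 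Applying \fullcref{AutFn}{perm} I may assume $i = n$, so that $g, h \in F_{n-1}$; applying \fullcref{AutFn}{sign} to~$a_n$ I may further assume $\varepsilon = +1$, so $s = g a_n h$ with $g, h \in F_{n-1}$. Now \fullcref{AutFn}{mult}, applied with the elements $g^{-1}, h^{-1} \in F_{n-1}$, furnishes an automorphism~$\psi$ with $\psi(a_n) = g^{-1} a_n h^{-1}$ and $\psi(a_j) = a_j$ for $j < n$; since $\psi$ fixes~$g$ and~$h$, we get $\psi(s) = g \cdot g^{-1} a_n h^{-1} \cdot h = a_n$. Finally, following~$\psi$ with the transposition of~$a_1$ and~$a_n$ (again \fullcref{AutFn}{perm}) carries~$s$ all the way to $a_1 \in F_{n-1}$, which finishes the construction.

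Since the whole argument is a repackaging of \cref{MustHaveAll,AutFn,ChangeS}, I do not anticipate a genuine obstacle; the only place calling for care is the case $\#^{\pm}_{a_i}(s) = 1$, where one must (i) verify that after the permutation and sign normalizations the prefix and suffix~$g, h$ really do lie in~$F_{n-1}$, so that \fullcref{AutFn}{mult} is applicable, and (ii) keep track of the composite of the three or four automorphisms and confirm it sends~$s$ where claimed. As a variant of the last step, instead of the final transposition one could stop at $\psi(s) = a_n$ and invoke the fact that $\cay(F_n; A^{\pm1})$ is a tree and hence has no hamiltonian circle at all — equivalently, any single edge of this tree forms a finite edge cut that the $2$-factor $\cay(F_n; a_n^{\pm1})$ meets in zero (rather than a positive even number of) edges, contradicting \fullcref{2factor}{even} via \cref{ChangeS}.
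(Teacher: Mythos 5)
Your proposal is correct and follows essentially the same route as the paper: reduce to \cref{MustHaveAll} by using \fullcref{AutFn}{perm} to assume $i=n$, then (in the $\#^{\pm}_{a_n}(s)=1$ case) explicitly build an automorphism sending $s=ga_n^{\epsilon}h$ to a single generator, invoking \cref{ChangeS} to transport the hamiltonian-circle property. The only difference is that you spell out the composite of automorphisms that the paper dismisses with ``it is not difficult to see,'' which is a harmless (and arguably helpful) elaboration.
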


\begin{proof}
Suppose $\#^{\pm}_{a_i}(s) \le 1$. By \fullcref{AutFn}{perm} (and \cref{ChangeS}), we may assume $i = n$. If $\#^{\pm}_{a_n}(s) = 0$, then $s \in F_{n-1}$, which contradicts \cref{MustHaveAll}. Therefore, we must have $\#^{\pm}_{a_n}(s) = 1$, so we may write $s = g a_n^\epsilon h$, where $g,h \in F_{n-1}$ and $\epsilon = \pm 1$. Then it is not difficult to see from \cref{AutFn} that there is an automorphism~$\varphi$ of~$F_n$, such that $\varphi(s) = a_1 \in F_{n-1}$. This is a contradiction (by \cref{MustHaveAll,ChangeS}).
\end{proof}

\begin{thm}[Georgakopoulos {[personal communication]}]\label{uniqueFn}
If $s \in F_n$, such that $\#^{\pm}_{a_i}(s) \le 2$ for $1 \le i \le n$, and\/ $\cay \bigl( F_n; A^{\pm 1} \cup \{s^{\pm 1}\} \bigr)$ has a hamiltonian circle, then this Cayley graph is uniquely hamiltonian. 
More precisely, $\cay( F_n; s^{\pm 1})$ is the only hamiltonian circle in this Cayley graph.
\end{thm}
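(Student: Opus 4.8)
The plan is to show that any hamiltonian circle $H$ of $X:=\cay\bigl(F_n;A^{\pm1}\cup\{s^{\pm1}\}\bigr)$ must equal $\cay(F_n;s^{\pm1})$. First, the hypotheses force $\#^\pm_{a_i}(s)=2$ for every $i$: we already have $\le 2$, and if $\#^\pm_{a_i}(s)\le 1$ for some $i$, then (after permuting the $a_i$ and, if necessary, applying \cref{AutFn}) some automorphism of $F_n$ carries $s$ into $F_{n-1}$, so, running the argument in the proof of \cref{MustHaveAll} through the quasi-isometry observation used for \cref{ChangeS} to pass between generating sets, $X$ would have a single-edge finite cut and hence no hamiltonian circle at all, contrary to hypothesis. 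Next, $s\neq1$ (else $X$ is the tree $\cay(F_n;A^{\pm1})$, which has no hamiltonian circle), so $s$ has infinite order and $gs\neq gs^{-1}$; thus $\cay(F_n;s^{\pm1})$ is already a $2$-factor of $X$. It therefore suffices to prove that $H$ contains every $s$-edge, equivalently that $H$ contains no edge of the tree $T:=\cay(F_n;A^{\pm1})$.

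The main input is a description of a family of small finite edge cuts. For an edge $e=\{g,ga_i\}$ of $T$, let $A_e$ be the vertex set of the component of $T\smallsetminus e$ not containing $g$, and set $\edgecut(e):=\bdry_X(A_e)$; this is finite, again by the quasi-isometry argument of \cref{ChangeS}. I claim $\edgecut(e)$ is exactly $e$ together with two $s$-edges: an $s$-edge $\{h,hs\}$ lies in $\edgecut(e)$ precisely when the $T$-geodesic from $h$ to $hs$ — which spells the reduced word $s$ — traverses $e$, and such traversals correspond bijectively to the occurrences of $a_i^{\pm1}$ in $s$, of which there are $\#^\pm_{a_i}(s)=2$; a routine check (using that the letters of $s$ strictly between its two $a_i^{\pm1}$'s lie in $F_{n-1}$) shows the two resulting $s$-edges are distinct from each other and from $e$. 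Hence $\lvert\edgecut(e)\rvert=3$, so \fullcref{2factor}{even} gives $\lvert E(H)\cap\edgecut(e)\rvert=2$. Writing $\edgecut(e)=\{e,f,f'\}$: if $e\in E(H)$ then exactly one of $f,f'$ lies in $E(H)$, while if $e\notin E(H)$ then both do.

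Now suppose, for contradiction, that some $a_i$-edge $e_0=\{v_0,v_1\}$ lies in $E(H)$; let $f_0$ be the unique $s$-edge of $\edgecut(e_0)$ in $E(H)$, and write $A=A_{e_0}$, so $\bdry_X(A)=\{e_0,f_0,f_0'\}$ and $\bdry_X(A)\cap E(H)=\{e_0,f_0\}$. A symmetric-difference computation against \fullcref{2factor}{even} (if $\bdry_X(B)\cap E(H)=\{e_0,f_0\}$ then $\bdry_X(A\triangle B)\cap E(H)=\varnothing$, forcing $B\in\{A,\,V(X)\smallsetminus A\}$) shows that $\bdry_X(A)$ is, up to complementation, the only finite edge cut meeting $E(H)$ in $\{e_0,f_0\}$; since \fullcref{2factor}{2edges} guarantees such a cut exists for the pair $e_0,f_0\in E(H)$, the circle $\overline H$ crosses the two-part partition $\{A,\,V(X)\smallsetminus A\}$ exactly twice, at $e_0$ and $f_0$. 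It follows that $H\cap A$ is a spanning subgraph of the induced subgraph $X[A]$ in which exactly two vertices — the $A$-endpoints of $e_0$ and of $f_0$, which are distinct since $A$ is infinite — have degree $1$ and all others have degree $2$, and whose closure in the Freudenthal compactification of $X[A]$ is a single arc.

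The final step is to contradict this. A connected locally finite graph of maximum degree $\le 2$ is a path, ray, double ray, or cycle, and no infinite one has exactly two vertices of degree $1$; so the only possibility is that $H\cap A$ consists of two rays emanating from the $A$-endpoints of $e_0$ and $f_0$, together with some double-ray components of $H$ lying entirely inside $A$, all strung into an arc through ends of $X[A]$. One must therefore rule out any component of $H$ contained in $A$: if such a component uses an $a_i$-edge one reapplies the cut of the second paragraph to it, and if it uses only $s$-edges then it is a coset $g\langle s\rangle$ whose two ends $gs^{\pm\infty}$ would have to be "captured" by $A$ in a way incompatible with $\#^\pm_{a_i}(s)=2$. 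The cleanest way to organize this is probably to contract to the finite minors $X/{\requiv{r}}$ (exactly as in \cref{compactness} and the proof of \cref{LeggeGeneral}) and check that the image of any hamiltonian circle in each minor is forced to be the image of $\cay(F_n;s^{\pm1})$. I expect this last step — excluding stray double-ray components of $H$ inside $A$, and in particular handling words such as $s=[a_1,a_2]\cdots[a_{n-1},a_n]$, for which the cuts $\edgecut(e)$ along an $a_i$-line are pairwise disjoint so there is no direct propagation — to be the main obstacle, and the point where the bound $\#^\pm_{a_i}(s)\le2$ is genuinely used.
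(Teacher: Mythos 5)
Your setup is sound as far as it goes: the three-edge cuts $\edgecut(e)=\{e,f,f'\}$ attached to each tree edge $e$, obtained from the two occurrences of the corresponding letter in $s$, are exactly the cuts the paper uses, and the parity consequences you draw from \fullcref{2factor}{even} are correct. But the argument is not complete, and you say so yourself: the final step, ruling out double-ray components of $H$ inside one side $A$ of a single cut, is left as ``the main obstacle,'' with only the hope that contracting to finite minors will settle it. That is a genuine gap, not a routine verification --- analyzing the structure of $H$ on one side of a single cut gives too little information (a circle can cross the partition exactly twice and still contain tree edges locally), and nothing you have written pins down where the stray double-ray components would have to go.

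The missing idea is to chain the cuts along the tree geodesic underlying a single absent $s$-edge, rather than to dissect one cut. Concretely: suppose $v\edge vs\notin E(H)$, and let $v=v_0\edge v_1\edge\cdots\edge v_k=vs$ be the tree geodesic, so $v_i=v_{i-1}s_i$ with $s=s_1\cdots s_k$ reduced. For each $i$, the cut $\edgecut(v_{i-1}\edge v_i)$ has at most three edges, and one of them is $v\edge vs$ itself (since $v$ and $vs$ lie on opposite sides of the deleted tree edge); as that edge is not in $H$, \fullcref{2factor}{even} forces both remaining edges, in particular $v_{i-1}\edge v_i$, into $H$. So $H$ contains the whole geodesic. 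Now $s\neq s_1^k$ (else $s$ lies in a proper free factor and \cref{MustHaveAll} rules out any hamiltonian circle at all), so there is an $i$ with $s_i=s_1$ and $s_{i+1}\neq s_1$. The two $H$-neighbours of $v_i$ are then $v_is_1^{-1}$ and $v_is_{i+1}$, neither of which is $v_is$ or $v_is_1$; hence $v_i\edge v_is\notin E(H)$, and applying the same forcing at $v_i$ puts $v_i\edge v_is_1$ into $H$ --- a third neighbour of $v_i$, contradicting that $H$ is a $2$-factor. This two-step propagation replaces your entire third and fourth paragraphs and needs only the bound $\#^{\pm}_{a_i}(s)\le 2$ (your preliminary reduction to equality is harmless but unnecessary).
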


\begin{proof}
Let $C$ be a hamiltonian circle, and suppose there is a vertex~$v$, such that the edge $v \edge vs$ is not in~$C$. (This will lead to a contradiction.) Since $\cay( F_n; A^{\pm 1} )$ is a tree, it has a unique path from~$v$ to~$vs$; let
	\[ v = v_0 \edge v_1 \edge \cdots \edge v_k = vs \]
be this path. So $v_i = v_{i-1} s_i$ for $1 \le i \le k$, where each $s_i$ is in~$A^{\pm 1}$, and $s = s_1 s_2 \cdots s_k$ (as a reduced word). 

We claim that $C$ contains the path from~$v$ to~$vs$ in $\cay( F_n; A^{\pm1})$. In other words, we claim that $C$ contains the edge $v_{i-1} \edge v_i$, for $1 \le i \le k$. Note that removing this edge disconnects $\cay( F_n; A^{\pm1} )$ (since $\cay( F_n; A^{\pm1})$ is a tree). 
Let $X_1$ and~$X_2$ be the two connected components of this edge-deleted graph. 
Since $\#_{s_i}^{\pm1}(s) \le 2$, the graph $\cay( F_n; s^{\pm1})$ has at most 2 edges joining $X_1$ and~$X_2$.  So $\cay \bigl( F_n; A^{\pm1} \cup \{s^{\pm1}\} \bigr)$ has at most 3 edges joining these two sets. However, one of these edges is $v \edge vs$, which is not in~$C$. Since $C$ must intersect each finite edge cut in a positive even number of edges (see \fullcref{2factor}{even}), this implies that $C$ contains both of the other edges, one of which is $v_{i-1} \edge v_i$. This completes the proof of the claim.

We know from \cref{MustHaveAll} that $\cay \bigl( F_n; A^{\pm1} \cup \{s_1^{\pm k}\} \bigr)$ does not have a hamiltonian circle, so $s \neq s_1^k$, so there is some~$i$ (with $1 \le i < k$), such that $s_i = s_1$ and $s_{i+1} \neq s_1$. By the claim that was proved in the preceding paragraph, we also know that $v_i$ is adjacent in~$C$ to the two vertices
	\[ \text{$v_{i-1} = v_i s_i^{-1} = v_i s_1^{-1} \notin \{v_i s, v_i s_1\}$ 
	\quad and \quad
	$v_{i+1} = v_i s_{i+1}\notin \{v_i s, v_i s_1\}$} . \]
Since $C$ is a 2-factor, these are the only two neighbours of~$v_i$ in~$C$, so $v_i$ is not adjacent to $v_i s$ in~$C$. It is also not adjacent to $v_i s_1$, so $C$ does not contain the path from~$v_i$ to~$v_i s$ in $\cay( F_n; A^{\pm1})$. This contradicts the claim that was proved in the preceding paragraph (with~$v_i$ in place of~$v$).
\end{proof}

The following \lcnamecref{lsimDef} and \lcnamecref{FnCut} are straightforward analogues of \cref{requivDef,compactness}.

\begin{definition} \label{lsimDef}
For each $\ell \in \N^+$:
\begin{enumerate}
    \item  We define an equivalence relation $\lsim{\ell}$ on $F_n$ by declaring that two reduced words are equivalent if they have the same initial segment of length~$\ell$. (A word of length less than $\ell$ is not equivalent to any other elements of the group.) The equivalence class of an element $w \in F_n$ is $[w]_{\ell}$.
    \item $\cay(F_n; S)/{\lsim{\ell}}$ is the quotient graph that is obtained from $\cay(F_n; S)$ by identifying all vertices that are in the same equivalence class. The quotient graph is allowed to have multiple edges, but we remove all loops.
\end{enumerate}
\end{definition}

\begin{lem}[cf.~{\cite[p.~1436]{MR2800969}}]\label{FnCut}
The edges in any finite edge cut of $\cay(F_n;S)$ form an edge cut of $\cay(F_n;S)/{\lsim{\ell}}$, for all sufficiently large~$\ell$. Conversely, the edges in any edge cut of $\cay(F_n;S)/{\lsim{\ell}}$ form a finite edge cut of $\cay(F_n;S)$.
\end{lem}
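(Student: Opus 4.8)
\textbf{Proof proposal for \cref{FnCut}.}

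The plan is to follow the template provided by the proof of \cref{compactness}, transferring the argument verbatim with the obvious substitutions ($Z_m * Z_n \rightsquigarrow F_n$, $\requiv{r} \rightsquigarrow \lsim{\ell}$, ``$r$th $b$-string'' $\rightsquigarrow$ ``initial segment of length~$\ell$''). The two directions are genuinely different in character, so I would treat them separately. For the \emph{converse} (the easy direction), I would first observe that the quotient map $\pi_\ell \colon \cay(F_n;S) \to \cay(F_n;S)/{\lsim{\ell}}$ is a graph homomorphism whose point-fibres are the equivalence classes $[w]_\ell$, each of which is finite precisely because there are only finitely many reduced words of each length (as $S$ is finite) — wait, that is false in general, so instead I would note that each class $[w]_\ell$ with $\ell(w) \ge \ell$ is contained in a single connected component of $\cay(F_n; A^{\pm1})$ after deleting the ball of radius~$\ell$, hence is ``one end's worth'' of vertices. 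More directly: if $\mathcal E = \bdry_{X/\lsim{\ell}}(B)$ is an edge cut of the quotient, pull $B$ back to a set $A' = \pi_\ell^{-1}(B) \subseteq F_n$; then $\bdry_X(A') = \pi_\ell^{-1}(\mathcal E)$ as edge sets (edges within a class map to loops, which are deleted), and this is finite because $\mathcal E$ is finite and each fibre of $\pi_\ell$ on the edge set is finite (an edge $\pi_\ell(g)\edge\pi_\ell(gs)$ lifts only to edges $g'\edge g's$ with $g' \lsim{\ell} g$, and the two endpoints of such a lift lie in a bounded neighbourhood of each other, so only finitely many lifts exist once we are outside the radius-$\ell$ ball). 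This gives a finite edge cut of $\cay(F_n;S)$.

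For the \emph{forward} direction, let $\edgecut = \bdry_X(A)$ be a finite edge cut of $\cay(F_n;S)$. Each of the finitely many edges of $\edgecut$ has two endpoints, all lying within some bounded ball $B_r(1)$ in the word metric of $\cay(F_n; A^{\pm1})$ (using that $S$ is finite, so $\cay(F_n;S)$ and $\cay(F_n;A^{\pm1})$ are quasi-isometric, or simply that each $s \in S$ is a fixed reduced word). Choose $\ell = r+1$ (or $r$ plus the maximum word length of an element of $S$). I claim that for this $\ell$, the set $A$ is a union of $\lsim{\ell}$-classes together with finitely many ``short'' vertices, so that $\pi_\ell(\edgecut)$ is well-defined and is an edge cut of the quotient witnessed by $\pi_\ell(A)$. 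The key point is: if $w, w'$ are reduced words with $\ell(w), \ell(w') \ge \ell$ and $w \lsim{\ell} w'$, then $w$ and $w'$ lie in the same connected component of $\cay(F_n;S) \smallsetminus B_r(1)$ — indeed the reduced word for $w$ can be continuously shortened/lengthened to that of $w'$ without ever re-entering the radius-$r$ ball, and these moves are paths in $\cay(F_n; A^{\pm1}) \subseteq \cay(F_n;S)$. Since no edge of $\edgecut$ leaves $B_r(1)$, such $w, w'$ are on the same side of $\edgecut$, i.e.\ both in $A$ or both in $A^c$. Hence $\edgecut = \bdry_X(A'')$ where $A''$ is a union of $\lsim{\ell}$-classes, and then $\pi_\ell(\edgecut) = \bdry_{X/\lsim{\ell}}(\pi_\ell(A''))$ is an edge cut of the quotient (it is nonempty and proper because $A''$ is).

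I expect the main obstacle to be purely expository: making precise the intuitive statement ``two words with the same length-$\ell$ prefix, both long enough, lie in the same component outside a bounded ball'' without drifting into the quasi-isometry machinery of \cref{ChangeS} when an elementary tree argument suffices. Concretely, the cleanest route is to fix $\ell$ large enough that every edge of $\edgecut$ has both endpoints of word length $< \ell$, argue that $\cay(F_n;A^{\pm1}) \smallsetminus \{\text{vertices of length} < \ell\}$ has exactly one connected component for each reduced word of length~$\ell$ (namely the subtree of words extending it), note each such component is connected in $\cay(F_n;S)$ as well, and conclude that $A$ restricted to words of length $\ge \ell$ is a union of these components, i.e.\ a union of $\lsim{\ell}$-classes. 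Given that \cref{FnCut} is explicitly flagged in the text as a ``straightforward analogue'' of \cref{compactness}, a terse proof along these lines — or even a one-line appeal to the structure of the proof of \cref{compactness} — is all that is warranted, and I would keep it correspondingly short.
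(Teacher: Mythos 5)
The paper never actually proves \cref{FnCut} (nor its template \cref{compactness}); both are stated as elementary observations, so there is no official proof to compare against line by line. Your overall strategy is the standard one, and your converse direction is sound: since multiple edges are retained, the non-loop edges of the quotient are in bijection with a set of edges of $\cay(F_n;S)$, there are only finitely many of them (an edge $g\edge gs$ whose endpoints lie in different classes forces $g$ into a ball of word length at most $\ell+\max_{t\in S}\ell(t)$), and the edge cut $\bdry(B)$ of the quotient pulls back exactly to $\bdry_X\bigl(\pi_\ell^{-1}(B)\bigr)$.

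The forward direction, however, contains a step that fails as written. You invoke the inclusion $\cay(F_n;A^{\pm1})\subseteq\cay(F_n;S)$ and, in your ``cleanest route,'' the claim that each class $[u]_\ell$ (the subtree of reduced words extending~$u$) ``is connected in $\cay(F_n;S)$ as well.'' This is only justified when $A^{\pm1}\subseteq S$, whereas the lemma — and its use inside \cref{CycleToHamCircle}, whose conclusion concerns an arbitrary finite symmetric generating set $S$ containing~$s$ — is stated for general~$S$, for which the induced subgraph on a class need not be connected. What you actually need is weaker and is true: any two vertices of $[u]_\ell$ can be joined by a walk in $\cay(F_n;S)$ staying outside any prescribed ball. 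Take the path between them in the subtree below~$u$ and replace each $A^{\pm1}$-edge by a path in $\cay(F_n;S)$ of length at most $M=\max_i d_{\cay(F_n;S)}(1,a_i)$; every vertex of the resulting walk then has word length at least $\ell-M\cdot\max_{t\in S}\ell(t)$. Choosing $\ell$ larger than this constant plus the maximal word length of an endpoint of an edge of $\edgecut$, the walk avoids $\edgecut$, so both vertices lie on the same side of the cut, and your conclusion that $A$ is a union of $\lsim{\ell}$-classes (hence that $\edgecut$ descends to an edge cut of the quotient) goes through. This is a two-sentence repair, but without it the argument only covers generating sets containing $A^{\pm1}$.
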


The next two \lcnamecref{DegInGr}s 
provide important information about these quotient graphs.

\begin{lem} \label{DegInGr}
Let $v,s \in F_n$ \textup(with $s \neq 1$\textup) and $r\in\N^+$.
\noprelistbreak
    \begin{enumerate}
        \item \label{DegInGr-less}
        If $\ell(v) < r$, then $[v]_r$ has degree~$2$ in $\cay(F_n; s^{\pm1})/{\lsim{\ell}}$.
        \item \label{DegInGr-r}
        If $\ell(v) = r$, and the last letter of~$v$ is~$a$, then $\#_a^{\pm}(s)$ is equal to the degree of\/ $[v]_r$ in $\cay(F_n; s^{\pm1})/{\lsim{\ell}}$.
    \end{enumerate}
\end{lem}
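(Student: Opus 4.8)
The plan is to reduce the computation of the degree of the vertex $[v]_r$ in the quotient graph to a count of edges in an edge cut of $Y:=\cay(F_n;s^{\pm1})$ itself, and then to carry out that count by passing to the tree $T:=\cay(F_n;A^{\pm1})$.

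First I would record two preliminary remarks. Since $s\neq1$ and a free group has no element of order~$2$, the graph~$Y$ is $2$-regular with no loops: each vertex~$g$ is joined to the two distinct vertices $gs$ and $gs^{-1}$, and $g\mapsto(g\edge gs)$ is a bijection from~$F_n$ onto the edge set of~$Y$. Secondly, for any class $[v]_r$, its degree in $Y/{\lsim{r}}$ equals $\bigl|\bdry_Y([v]_r)\bigr|$: an edge of~$Y$ with both endpoints in $[v]_r$ becomes a deleted loop, while an edge with exactly one endpoint in $[v]_r$ survives (as one strand of a possible multiple edge) and contributes~$1$ to the degree, and no edge with an endpoint outside $[v]_r$ can become a loop. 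With these remarks, part~\pref{DegInGr-less} is immediate: by \cref{lsimDef} the class $[v]_r$ is the singleton $\{v\}$ when $\ell(v)<r$, so $\bdry_Y(\{v\})$ consists exactly of the two edges $v\edge vs$ and $v\edge vs^{-1}$, and the degree is~$2$.

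For part~\pref{DegInGr-r} I would write $v=v_1\cdots v_r$ for the reduced word (so $v_r=a$ and $r\ge1$), put $v''=v_1\cdots v_{r-1}$, and let $e$ be the tree edge $v''\edge v$. Since $\ell(v)=r$, the class $[v]_r$ — the set of group elements whose reduced word begins with~$v$ — is exactly one of the two components of $T\setminus\{e\}$. Hence an edge $g\edge gs$ of~$Y$ lies in $\bdry_Y([v]_r)$ if and only if the unique $T$-geodesic from~$g$ to~$gs$ traverses~$e$. Writing $s=s_1s_2\cdots s_k$ as a reduced word, that geodesic is the left translate by~$g$ of the walk $1,s_1,s_1s_2,\dots,s_1\cdots s_k$, which is a $T$-geodesic precisely because the word is reduced (no two consecutive steps backtrack); so its edges are $g s_1\cdots s_{i-1}\edge g s_1\cdots s_i$ for $1\le i\le k$.

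The last step is the count. The edge $e=v''\edge v$ is the edge of~$T$ at~$v''$ in the direction~$a$, so the $i$-th step of the translated geodesic can equal~$e$ only if $s_i\in\{a,a^{-1}\}$, and when it does the resulting equation $\{gs_1\cdots s_{i-1},\,gs_1\cdots s_i\}=\{v'',v\}$ pins down~$g$ uniquely (namely $g=v''(s_1\cdots s_{i-1})^{-1}$ if $s_i=a$, in which case $g\notin[v]_r$ and $gs\in[v]_r$; and $g=v(s_1\cdots s_{i-1})^{-1}$ if $s_i=a^{-1}$, in which case $g\in[v]_r$ and $gs\notin[v]_r$). Conversely each index~$i$ with $s_i\in\{a,a^{-1}\}$ yields, by that formula, an edge $g\edge gs$ whose $T$-geodesic traverses~$e$. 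Since a geodesic in a tree uses each edge at most once, distinct indices give distinct edges, so $\bigl|\bdry_Y([v]_r)\bigr|=\#\{i:s_i=a\}+\#\{i:s_i=a^{-1}\}=\#^{\pm}_a(s)$, which is what we want. I expect the main obstacle to be exactly this bookkeeping: checking that the two families of boundary edges arising from $s_i=a$ and from $s_i=a^{-1}$ are disjoint (they are, since the endpoints lie in $[v]_r$ in opposite orders), that distinct indices within one family cannot produce the same edge, and that the passage from an edge cut in~$Y$ to a vertex degree in the quotient loses nothing; everything else is routine.
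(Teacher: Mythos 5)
Your proof is correct. For part~\pref{DegInGr-less} it coincides with the paper's (the class is a singleton, so exactly the two edges to $vs^{\pm1}$ survive). For part~\pref{DegInGr-r} you take a genuinely different route: the paper works entirely inside the quotient graph, writing each incident edge as $[w]_r \edge [wt]_r$ with $w = vx$ and $t \in \{s, s^{-1}\}$, and using the forced cancellation of the suffix $ax$ in the product $wt$ to put such edges in bijection with the occurrences of $a^{-1}$ in $s$ or in~$s^{-1}$; you instead identify the degree of $[v]_r$ with $\bigl|\bdry_Y([v]_r)\bigr|$ for $Y = \cay(F_n; s^{\pm1})$, recognize $[v]_r$ as one component of the tree $\cay(F_n; A^{\pm1})$ minus the edge $v''\edge v$, and count the boundary edges by recording the steps at which the translated geodesic spelling~$s$ crosses that tree edge. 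The two arguments count the same set in isomorphic ways (your index~$i$ with $s_i \in \{a,a^{-1}\}$ corresponds to the paper's position $k+1$ of an $a^{-1}$ in $t$), but yours is more geometric: it makes the statement visibly an edge-cut computation, which connects it to \cref{2factor} and to the half-tree argument the authors themselves use in the proof of \cref{uniqueFn}, whereas the paper's version is self-contained word combinatorics. Your bookkeeping is sound throughout: the injectivity of $g \mapsto \{g, gs\}$ and the absence of loops both rest on $F_n$ being torsion-free, distinctness of the edges produced by distinct indices follows (as you say) from a tree geodesic using each edge at most once, and the reduction from quotient-degree to boundary size correctly accounts for the loop-deletion and multi-edge conventions of \cref{lsimDef}.
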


\begin{proof}
\pref{DegInGr-less} We have $[vs^{-1}]\stackrel{s}{\to} [v]_r \stackrel{s}{\to} [vs]_r$, and these are the only two edges that are incident with~$v$.

\pref{DegInGr-r} Each edge incident with $[v]_r$ is of the form $[w]_r \edge [wt]_r$, with $t \in \{s, s^{-1}\}$ and $w \lsim{r} v$, but $wt \not\lsim{r} v$ (since loops are not allowed). Since $w \lsim{r} v$ (and the last letter of~$v$ is~$a$), we have $w = v x$, for some $x \in F_n$, such that the first letter of~$x$ is not~$a^{-1}$. Write $x = x_1 x_2 \cdots x_k$ as a reduced word. Since $wt \not\lsim{r} v$, the letters $a, x_1, x_2, \ldots, x_k$ at the end of~$w$ must all cancel in the product $wt$, so if we write $t = t_1 t_2 \cdots t_r$ as a reduced word, then $t_1 t_2 \cdots t_{k+1} = (ax)^{-1}$. In particular, we must have $t_{k+1} = a^{-1}$. (Conversely, if $t \in \{s, s^{-1}\}$ and $t_{k+1} = a^{-1}$, then $t_k^{-1} \neq a^{-1}$, so
    \[ [v]_r \stackrel{t}{\to} [v t_{k}^{-1} t_{k-1}^{-1} \cdots t_1^{-1} \cdot t]_r \]
is an edge of $\cay(F_n; s^{\pm1})/{\lsim{\ell}}$.) Thus, the edges incident with $[v]_r$ are in one-to-one correspondence with the occurrences of $a^{-1}$ in either $s$ or~$s^{-1}$. So the degree of $[v]_r$ is $\#_{a^{-1}}^{\pm}(s) = \#_{a}^{\pm}(s)$, as claimed.
\end{proof}

\begin{lem} \label{X1s}
Let $s = s_1 s_2 \cdots s_r$ be a nontrivial reduced word in~$F_n$. Then $\cay(F_n; s^{\pm1})/{\lsim{1}}$ is isomorphic to the graph $X^s_1$ that is defined as follows:
    \[ V(X^s_1) = A^{\pm1} \cup \{1\} \]
and 
    \[ E(X^s_1)
    = \{1 \edge s_1\}
    \cup 
    \{ s_i^{-1} \edge s_{i+1} \mid 1 \le i < r \,\} 
    \cup \{ s_r^{-1} \edge 1 \}
    \quad \text{\textup(a multiset\textup)}
    . \]
More precisely, an isomorphism $\varphi \colon X^s_1 \to \cay(F_n; s^{\pm1})/{\lsim{1}}$ is defined by $\varphi(x) = [x]_1$ for $x \in V(X^s_1)$.
\end{lem}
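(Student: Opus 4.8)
The plan is to check directly that the map $\varphi\colon x \mapsto [x]_1$ is a graph isomorphism, handling the vertices first and then matching the two edge multisets one edge at a time. For the vertices: the relation $\lsim{1}$ isolates the identity in its own class and groups every other reduced word by its first letter, so the classes of $\lsim{1}$ are exactly $\{1\}$ together with, for each letter $a \in A^{\pm1}$, the class $C_a$ of all reduced words whose first letter is~$a$. Thus $\varphi$ is a bijection from $V(X^s_1) = A^{\pm1} \cup \{1\}$ onto the vertex set of $\cay(F_n; s^{\pm1})/{\lsim{1}}$, with $\varphi(1) = \{1\}$ and $\varphi(a) = C_a$.

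Next I would determine which edges of $\cay(F_n; s^{\pm1})$ survive in the quotient. Every edge of $\cay(F_n; s^{\pm1})$ has the form $g \edge gs$ for a unique $g \in F_n$, and it becomes a loop (hence is deleted) unless $[g]_1 \ne [gs]_1$, i.e., unless right-multiplication by~$s$ changes the first letter of~$g$. Writing $s = s_1 s_2 \cdots s_r$ as a reduced word, an elementary cancellation analysis shows that the first letter of~$g$ is unaffected unless $g$ cancels completely in the product $g \cdot s$, and that this happens precisely when $g^{-1}$ is an initial segment of~$s$, say $g^{-1} = s_1 \cdots s_m$ with $0 \le m \le r$; in that case $g = (s_1 \cdots s_m)^{-1}$ and $gs = s_{m+1} \cdots s_r$. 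The hypothesis that $s$ is reduced enters exactly here, to guarantee $s_{m+1} \ne s_m^{-1}$, so that whenever $g$ does cancel completely the two endpoints $[g]_1$ and $[gs]_1$ really are distinct (the cases $g = 1$ and $m = r$, where $gs = 1$, being immediate). Hence the edges that survive in the quotient are exactly those with left endpoint $g = (s_1 \cdots s_m)^{-1}$ for some $m$ with $0 \le m \le r$.

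Letting $m$ run from $0$ to $r$ produces $r+1$ edges of $\cay(F_n; s^{\pm1})$ that are pairwise distinct, since their left endpoints $(s_1 \cdots s_m)^{-1}$ have distinct word lengths~$m$. Reading off their endpoints under the identification of the previous paragraph: $m = 0$ gives $[1]_1 \edge [s_1]_1$; each $m$ with $1 \le m < r$ gives $[s_m^{-1}]_1 \edge [s_{m+1}]_1$; and $m = r$ gives $[s_r^{-1}]_1 \edge [1]_1$. As a multiset this is exactly the image under $\varphi$ of $E(X^s_1)$; in particular, two values of~$m$ yield the same edge of the quotient if and only if they yield the same edge of $X^s_1$, so the multiplicities agree. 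Therefore $\varphi$ carries $E(X^s_1)$ bijectively onto the edge multiset of $\cay(F_n; s^{\pm1})/{\lsim{1}}$, and $\varphi$ is the desired isomorphism.

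The one step needing care is the cancellation analysis in the second paragraph together with the bookkeeping of multiplicities: one must be sure that no surviving edge is missed, that every non-surviving edge genuinely reduces to a loop, and that the parametrization by $m \in \{0, 1, \ldots, r\}$ enumerates the surviving edges without collapsing two distinct edges of $\cay(F_n; s^{\pm1})$ into one. The boundary cases $g = 1$ (the empty initial segment) and $m = r$ (where $gs$ reduces to the identity) should be treated explicitly rather than swept into the general argument.
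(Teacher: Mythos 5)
Your proof is correct and follows essentially the same route as the paper's: identify the classes of $\lsim{1}$ as $\{1\}$ together with one class per first letter, and observe that the edge with left endpoint $(s_1\cdots s_m)^{-1}$ maps to $[s_m^{-1}]_1 \edge [s_{m+1}]_1$ (with the conventions $s_0 = s_{r+1} = 1$). You are in fact somewhat more thorough than the paper, which exhibits these $r+1$ edges but leaves implicit the converse cancellation argument showing that every other edge of $\cay(F_n; s^{\pm1})$ collapses to a loop.
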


\begin{proof}
First, note that the neighbours of~$1$ in $\cay(F_n; s^{\pm1})$ are $s$ and~$s^{-1}$, so the neighbours of~$[1]_1$ in $\cay(F_n; s^{\pm1})/{\lsim{1}}$ are $[s]_1 = [s_1]_1$ and $s^{-1} = [s_r^{-1}]_1$, so $X^s_1$ has the correct neighbours of~$1$.

For $1 \le i < r$, there is an edge in $\cay(F_n; s^{\pm1})/{\lsim{1}}$ 
    \[ \text{from 
    $[s_i^{-1}]_1
    = [s_i^{-1} s_{i-1}^{-1} \cdots s_1^{-1}]_1$
    to
    $[s_i^{-1} s_{i-1}^{-1} \cdots s_1^{-1}\cdot s]_1
    = [s_{i+1} s_{i+2} \cdots s_r]_1
    = [s_{i+1}]_1$}
    . \]
These are precisely the other edges of~$X^s_1$.
\end{proof}

\begin{notation}[cf.\ \cref{gtogs}]
For a nontrivial reduced word $s = s_1 s_2 \cdots r$, we often use $s_i^{-1} \stackrel{s_i s_{i+1}}{\to} s_{i+1}$ or $s_{i+1} \stackrel{s_i s_{i+1}}{\from} s_i^{-1}$ to denote the edge $s_i^{-1} \edge s_{i+1}$ of~$X_1^s$. Also, we use $1 \stackrel{s_1}{\to} s_1$ and $s_n^{-1} \stackrel{s_n}{\to} 1$ to denote the edges $1 \edge s_1$ and $s_n^{-1} \stackrel{s_n}{\to} 1$, respectively.
\end{notation}

Now, we describe a simple way to construct hamiltonian circles:

\begin{prop} \label{CycleToHamCircle}
    Let $s \in S$. If\/ $\cay(F_n; s^{\pm1})/{\lsim{1}}$ is a cycle, then $\cay(F_n; s^{\pm1})$ is a hamiltonian circle in $\cay(F_n; S)$.
\end{prop}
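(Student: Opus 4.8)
The plan is to use the characterization of hamiltonian circles given in \cref{2factor}, via the quotient graphs and \cref{FnCut}. Set $X = \cay(F_n; S)$ and $C = \cay(F_n; s^{\pm1})$. Since $s \in S$, the $2$-factor $C$ is indeed a spanning subgraph of $X$, so it remains to verify conditions \pref{2factor-even} and \pref{2factor-2edges}. By \cref{FnCut}, every finite edge cut of $X$ becomes an edge cut of some quotient $X/{\lsim{\ell}}$, and the trace of $C$ on that cut equals the trace of $C/{\lsim{\ell}}$ (which is a subgraph of $X/{\lsim{\ell}}$); conversely, edge cuts of the quotient lift to finite edge cuts of $X$. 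So it suffices to understand the quotients $C/{\lsim{\ell}}$.

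The key structural claim is that $C/{\lsim{\ell}}$ is a cycle for every $\ell \in \N^+$. The base case $\ell = 1$ is the hypothesis. For the induction step, I would argue exactly as in the proof of \cref{LeggeGeneral}: since no vertex of $C/{\lsim{\ell+1}}$ can have degree exceeding~$2$ (each vertex lies on the single two-way-infinite path $C$ through it, except possibly at branch behaviour — but by \cref{DegInGr} the degree in the quotient is $2$ when $\ell(v) < \ell+1$ and equals $\#_a^{\pm}(s)$ when $\ell(v) = \ell+1$, with the latter controlled because $C/{\lsim{1}}$ being a cycle forces $\#_a^\pm(s) = 2$ for every letter $a$ appearing in $s$), it suffices to show that the preimage in $C/{\lsim{\ell+1}}$ of each vertex of $C/{\lsim{\ell}}$ is a path, and that these paths glue together along the cycle structure of $C/{\lsim{\ell}}$. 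The preimage of $[v]_\ell$ (for $v$ of length exactly $\ell$, ending in some letter $a$) is the set of classes $[w]_{\ell+1}$ with $w \lsim{\ell} v$; using \fullcref{DegInGr}{less} and \fullcref{DegInGr}{r} together with the fact that $\#_a^\pm(s) = 2$, each such class has degree $2$ except for the two "boundary" classes which have degree $1$ (the edges leaving to the neighbouring fibers), so the fiber is a disjoint union of paths and cycles, and a short connectivity argument (following the $s$-steps inside the fiber, as in \cref{LeggeGeneral}) shows it is a single path. Reassembling, $C/{\lsim{\ell+1}}$ is a cycle.

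Granting the claim, conditions \pref{2factor-even} and \pref{2factor-2edges} follow easily. For \pref{2factor-even}: a finite edge cut $\edgecut$ of $X$ is an edge cut of $X/{\lsim{\ell}}$ for large $\ell$, hence separates $C/{\lsim{\ell}}$ — a cycle — into pieces, so it must contain a positive even number of edges of $C/{\lsim{\ell}}$, i.e., of $C$. For \pref{2factor-2edges}: given distinct $e_1, e_2 \in E(C)$, choose $\ell$ large enough that $e_1, e_2$ project to distinct edges of the cycle $C/{\lsim{\ell}}$; removing those two edges from the cycle disconnects its vertex set into two arcs $A_1, A_2$, and the corresponding partition of $V(F_n)$ defines a finite edge cut $\edgecut = \bdry_X(A)$ of $X$ with $E(C) \cap \edgecut = \{e_1, e_2\}$ (since the only edges of $C$ crossing the partition are the two removed cycle-edges). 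Then \cref{2factor} gives that $C$ is a hamiltonian circle in $X$.

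The main obstacle is the induction step of the structural claim — specifically, verifying rigorously that each fiber expands to a genuine \emph{path} (not a path-plus-cycles or a disjoint union) and that consecutive fibers attach so that the global object is a single cycle rather than a disjoint union of cycles. This is where the hypothesis that $C/{\lsim{1}}$ is a cycle (rather than merely $2$-regular) is essential: it rules out "short-circuiting" and pins down $\#_a^\pm(s) = 2$ for all relevant letters, which is exactly what \cref{DegInGr} needs. I expect the bookkeeping to mirror the base-case and induction-step displays in the proof of \cref{LeggeGeneral} closely, with $A^{\pm1}$ and the reduced-word combinatorics of $s$ playing the role that $a, b$ played there.
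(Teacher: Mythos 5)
Your proposal follows the paper's proof essentially step for step: both reduce the statement, via \cref{2factor} and \cref{FnCut}, to showing by induction that $\cay(F_n;s^{\pm1})/{\lsim{\ell}}$ is a cycle for every $\ell$, using \cref{DegInGr} to extract $\#_{a}^{\pm}(s)=2$ (hence $2$-regularity of every quotient) from the hypothesis that the level-$1$ quotient is a cycle, and then checking connectivity one fiber at a time. The step you flag as the main obstacle is closed in the paper exactly as you anticipate, by transporting the level-$1$ cycle into each fiber: for $v$ of length $\ell$ ending in the letter $a$, write the level-$1$ cycle as $[a^{-1}]_1 \edge [\aa_1]_1 \edge \cdots \edge [\aa_{2r}]_1 \edge [a^{-1}]_1$ and note that each edge $[\aa_i]_1 \edge [\aa_{i+1}]_1$, witnessed by some $g$ with $[g]_1=[\aa_i]_1$ and $[gs^{\pm1}]_1=[\aa_{i+1}]_1$, yields the edge $[v\aa_i]_{\ell+1} \edge [v\aa_{i+1}]_{\ell+1}$, so the fiber over $[v]_\ell$ contains the spanning path $[v\aa_1]_{\ell+1} \edge \cdots \edge [v\aa_{2r}]_{\ell+1}$ and therefore, by $2$-regularity, equals it --- which rules out the stray cycle components that your degree count alone could not exclude.
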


\begin{proof}
First, note that, for all~$i$:
    \begin{itemize}
    \item the degree of $[a_i]_1$ in $\cay(F_n; s^{\pm1})/{\lsim{1}}$ is~$2$ (because this graph is assumed to be a cycle),
    and
    \item the degree of $[a_i]_1$ in $\cay(F_n; s^{\pm1})/{\lsim{1}}$ is $\#_{a_i}^{\pm}(s)$ (by \fullcref{DegInGr}{r}).
    \end{itemize}
So we have $\#_{a_i}^{\pm}(s) = 2$ for all~$i$. Therefore, we see from \cref{DegInGr} that the graph $\cay(F_n; s^{\pm1})/{\lsim{\ell}}$ is regular of degree~$2$ for all $\ell \ge 1$.

We will show (by induction) that $\cay(F_n; s^{\pm1})/{\lsim{\ell}}$ is connected for all~$\ell$. Then the conclusion of the preceding paragraph implies that this quotient graph is a cycle for all $\ell \ge 1$. With this, the desired conclusion follows easily from \cref{2factor} (and \cref{FnCut}).

The base case of the induction is true by assumption (since cycles are connected). For the induction step, it suffices to show that each vertex in $\cay(F_n; s^{\pm1})/{\lsim{\ell}}$ expands to a connected graph in $\cay(F_n; s^{\pm1}) / {\lsim{\ell+1}}$. More precisely, for all $v \in F_n$ of length~$\ell$, we will show that the subgraph of $\cay(F_n; s^{\pm1}) / {\lsim{\ell+1}}$ induced by $\{\, [x]_{\ell+1} \mid x \lsim{l} v \,\}$ is a path. 
Assume, without loss of generality, that $v$ is a reduced word, and let $a$ be its last letter (so $a \in A^{\pm1}$). We can write the cycle $\cay(F_n; s^{\pm1})/{\lsim{1}}$ as
    \[ [a^{-1}]_1 \edge [\aa_1]_1 \edge [\aa_2]_1 \edge \cdots \edge [\aa_{2r}]_1 \edge [a^{-1}]_1 , \]
where each $\aa_i$ is in $A^{\pm 1} \cup \{1\}$, and is not equal to~$a^{-1}$. For $1 \le i < 2r$, there exists $g \in F_n$, such that $[g]_1 = [\aa_i]_1$ and either $[gs]_1 = [\aa_{i+1}]_1$ or $[gs^{-1}]_1 = [\aa_{i+1}]_1$. This implies 
$[vg]_{\ell+1} = [v\aa_i]_{\ell+1}$ and either $[vgs]_{\ell+1} = [v
\aa_{i+1}]_{\ell+1}$ or $[vgs^{-1}]_{\ell+1} = [v\aa_{i+1}]_{\ell+1}$. In either case, this implies there is an edge in $\cay(F_n; s^{\pm1}) / {\lsim{\ell+1}}$ from $[v\aa_i]_{\ell+1}$ to $[v\aa_{i+1}]_{\ell+1}$. Hence, the subgraph induced by $[v]_\ell$ is the path
    \[ [v\aa_1]_{\ell+1} \edge 
    [v\aa_2]_{\ell+1} \edge 
    \cdots
    \edge [v\aa_{2r}]_{\ell+1} 
   . \qedhere \]
\end{proof}

With these results in hand, it is now easy to prove \cref{FreeUHCEg,VirtFree} of the Introduction.

\thirdmain*
\begin{proof}
The uniqueness is immediate from \cref{uniqueFn}.

By \cref{CycleToHamCircle,X1s}, the proof will be complete if we verify that the graph $X^s_1$ is a cycle in each case.

\pref{FreeUHCEg-squares} We have
    \begin{align} \tag{$*$} \label{FreeUHCEg-squaresPf-cycle}
    1 
    \stackrel{a_1}\rightarrow a_1
   \stackrel{a_1^2}\leftarrow a_1^{-1}
   \stackrel{a_1a_2}\rightarrow a_2
   \stackrel{a_2^2}\leftarrow a_2^{-1}
   \stackrel{a_2a_3}\rightarrow 
   \cdots
    \stackrel{a_{n-1}a_n}\rightarrow a_n
   \stackrel{a_n^2}\leftarrow a_n^{-1}
   \stackrel{a_n}\rightarrow 1
   . \end{align} 
    
\pref{FreeUHCEg-commutators}
We have
    \[ \raise3.1\baselineskip\hbox{$  
    \begin{matrix}
        1 & \stackrel{a_1}\rightarrow 
        & a_1
    & \stackrel{a_1^{-1} a_2^{-1}}\rightarrow & a_2^{-1}
    & \stackrel{a_2 a_1^{-1}}\rightarrow & a_1^{-1}  
    & \stackrel{a_1 a_2}\rightarrow & a_2  
    \\& \stackrel{a_2^{-1} a_3}\rightarrow 
    &a_3 
    & \stackrel{a_3^{-1} a_4^{-1}}\rightarrow & a_4^{-1}
    & \stackrel{a_4 a_3^{-1}}\rightarrow & a_3^{-1}  
    & \stackrel{a_3 a_4}\rightarrow & a_4  
    \\& \stackrel{a_4^{-1} a_5}\rightarrow 
    &a_5 
     & \stackrel{a_5^{-1} a_6^{-1}}\rightarrow & a_6^{-1}
     & \stackrel{a_6 a_5^{-1}}\rightarrow & a_5^{-1}
    & \stackrel{a_5 a_6}\rightarrow & a_6
     \\& & \vdots
     \\ & \stackrel{a_{n-2}^{-1} a_{n-1}}\rightarrow 
     & a_{n-1}
    &  \stackrel{a_{n-1}^{-1} a_{n}^{-1}}\rightarrow & a_n^{-1} 
    & \stackrel{a_n a_{n-1}^{-1}}\rightarrow & a_{n-1}^{-1}  
    & \stackrel{a_{n-1} a_n}\rightarrow & a_n  
    & \stackrel{a_n^{-1}}\rightarrow 
   &  1    
   . 
    \end{matrix}$} \qedhere \]
 \end{proof}

\begin{remark}
If a graph has a unique hamiltonian circle, then the automorphism group of the graph acts on this circle. So \cref{FreeUHCEg} provides examples of faithful actions of $F_n$ on~$S^1$ (by homeomorphisms). However, it is not clear that these examples have any special interest from a dynamical point of view. In particular, the action provided by part~\pref{FreeUHCEg-squares} of the \lcnamecref{FreeUHCEg} is unfortunately not orientation-preserving (because the arrows in equation~\pref{FreeUHCEg-squaresPf-cycle} of the proof of \fullref{FreeUHCEg}{squares} do not have a consistent orientation).
\end{remark}

Our next proof uses the following characterization of outerplanarity:

\begin{thm}[Heuer {\cite[Thm.~1.8]{Heuer}}] \label{HeuerOuterplanar}
\leavevmode
\noprelistbreak
\begin{enumerate}
    \item \label{HeuerOuterplanar-iff}
    A locally finite connected graph is outerplanar \textup(as defined in \cref{OuterPlanarDef}\textup) if and only if it is 2-connected, and has no minor that is isomorphic to either $K_4$ or $K_{2,3}$.
    \item \label{HeuerOuterplanar-UHC}
    Every outerplanar graph has a unique hamiltonian circle.
\end{enumerate}
\end{thm}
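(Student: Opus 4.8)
The plan is to prove both parts by reducing to the classical finite theory of outerplanarity and then lifting to the infinite setting by a compactness argument. The two finite facts I would use are: (i) the Chartrand--Harary characterization, that a finite graph admits an embedding in the plane with all vertices on the outer face if and only if it has no $K_4$ and no $K_{2,3}$ minor; and (ii) in any such embedding of a finite $2$-connected graph, the chords (edges not on the outer face) are pairwise \emph{non-crossing} with respect to the cyclic order of the vertices on the outer boundary. Throughout I identify the disk embedding of \cref{OuterPlanarDef} with a topological embedding $\iota\colon |X|\to D$ that sends every vertex to a point of $\partial D$ and sends the distinguished hamiltonian circle $C_0$ onto $\partial D$.

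The uniqueness statement \fullcref{HeuerOuterplanar}{UHC} has a clean topological proof from this setup, so I would do it first. Suppose $C$ is any hamiltonian circle in~$X$. Then $\iota(C)$ is an embedded copy of~$S^1$ in the closed disk~$D$, and all vertices of~$X$ lie on~$\partial D$. A simple closed curve in a closed disk must meet the boundary points through which it passes in their cyclic boundary order: if it visited four boundary points out of order, two of its internally disjoint subarcs would separate the remaining two points to opposite sides and hence be forced to cross, contradicting simplicity. Thus $C$ traverses the vertices of~$X$ in exactly the cyclic order induced by~$\partial D$, i.e.\ the order determined by~$C_0$. Since consecutive vertices along~$C$ are joined by an edge of~$C$, while the only edge joining two cyclically consecutive boundary vertices is the corresponding boundary edge (a chord joins non-consecutive vertices), every edge of~$C$ is an edge of~$C_0$; as both are $2$-regular and spanning, $C=C_0$.

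For the forward direction of \fullcref{HeuerOuterplanar}{iff} I would argue that outerplanarity passes to finite witnesses. Since $X$ has a hamiltonian circle it is $2$-connected. If $X$ had a $K_4$ or $K_{2,3}$ minor, that minor would be witnessed by finitely many branch sets and connecting paths, all lying in some finite subgraph $Y\subseteq X$; but $\iota$ restricts to an embedding of~$Y$ in~$D$ with all of its vertices on~$\partial D$, so $Y$ is a finite outerplanar graph and therefore contains no $K_4$ or $K_{2,3}$ minor by~(i), a contradiction. The reverse direction is the substance of the theorem. Here I would fix an exhaustion $Y_1\subseteq Y_2\subseteq\cdots$ of~$X$ by finite $2$-connected subgraphs (available because $X$ is $2$-connected and locally finite). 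Each $Y_i$ inherits the hypothesis of having no $K_4$ or $K_{2,3}$ minor, hence is finite outerplanar by~(i), and by $2$-connectivity its outer boundary is, up to reflection, a well-defined cyclic order of~$V(Y_i)$. Restricting the boundary cyclic order of~$Y_{i+1}$ to~$V(Y_i)$ is compatible with that of~$Y_i$, so these cyclic orders form an inverse system of finite nonempty sets of choices; K\"onig's lemma produces a coherent circular order on~$V(X)$. Placing the vertices on~$\partial D$ in this order and drawing each edge as a chord yields a crossing-free drawing (non-crossing is a finite condition, verified inside each~$Y_i$ via~(ii)), and completing $\partial D$ by inserting the ends of~$X$ at the appropriate gaps gives the required embedding of~$|X|$, whose boundary is the desired hamiltonian circle.

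The main obstacle is this last limit construction, and specifically verifying that the boundary produced really is a hamiltonian circle in the Freudenthal sense rather than merely a spanning $2$-factor. Two points need care. First, the coherent circular order must be \emph{completed} by the ends of~$X$: one must check that each end corresponds to exactly one gap in the circular order, so that inserting the ends turns the closure of the boundary $2$-factor into a genuine embedded~$S^1$; here I would verify the edge-cut conditions of \cref{2factor}, using that every finite edge cut of~$X$ already appears in some~$Y_i$ and so meets the boundary in a controlled, even way. Second, $2$-connectivity is essential for taming the inverse system: it guarantees that each finite outerplanar piece has an essentially unique outer cycle, so that the restriction maps between successive levels are well defined and the inverse limit is a single circular order rather than an uncontrolled branching of incompatible embeddings.
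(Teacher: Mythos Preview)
This theorem is not proved in the paper at all: it is quoted from Heuer's paper \cite[Thm.~1.8]{Heuer} and used as a black box in the proof of \cref{MoreFnOuterplanar}. So there is no ``paper's own proof'' to compare your proposal against.

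That said, a brief assessment of your sketch: your argument for part~\pref{HeuerOuterplanar-UHC} is essentially correct and is the standard topological argument (any embedded circle in a closed disk must visit boundary points in their cyclic boundary order, else two of its subarcs would cross). Your forward direction of~\pref{HeuerOuterplanar-iff} is also fine. For the reverse direction, however, what you have written is a plan rather than a proof, and you correctly identify the real difficulties yourself. In particular: (a) the claim that the outer cyclic order of $Y_{i+1}$ restricts to that of $Y_i$ is not automatic and needs justification (it follows from uniqueness of the outerplanar embedding of a $2$-connected finite outerplanar graph, but this has to be argued); and (b) showing that the limiting boundary $2$-factor is a genuine hamiltonian circle in the Freudenthal sense---i.e., that the ends fit into the gaps in a one-to-one fashion so that the closure is an embedded $S^1$---is precisely the substantive content of Heuer's theorem and is not something you can wave at with \cref{2factor} alone. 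So your proposal is a reasonable outline of how such a proof goes, but the reverse implication of~\pref{HeuerOuterplanar-iff} remains a genuine gap as written.
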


We now prove the following generalization of \cref{FnOuterplanar}.

\begin{thm} \label{MoreFnOuterplanar}
In the situation of \cref{CycleToHamCircle} \textup(which includes \fullref{FreeUHCEg}{squares} and \fullref{FreeUHCEg}{commutators}\textup), the Cayley graph
$\cay \bigl( F_n; A^{\pm1} \cup \{s^{\pm1}\} \bigr)$ is outerplanar.
\end{thm}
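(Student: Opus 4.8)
The plan is to invoke Heuer's criterion \fullcref{HeuerOuterplanar}{iff}: it suffices to show that $X \coloneqq \cay \bigl( F_n; A^{\pm1} \cup \{s^{\pm1}\} \bigr)$ is $2$-connected and contains no $K_4$-minor and no $K_{2,3}$-minor. Two-connectedness is easy: $X$ is an infinite vertex-transitive graph containing the $2$-factor $\cay(F_n;s^{\pm1})$, which (by \cref{CycleToHamCircle} and the proof of \cref{2factor}) is a hamiltonian circle, hence connected, and $X$ has minimum degree $\ge 3$, so a short argument rules out cut-vertices. The real content is the excluded-minor condition.

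For the excluded minors, the key tool is \cref{FnCut}: every finite edge cut of $X$ corresponds to an edge cut of the quotient $X/{\lsim{\ell}}$ for $\ell$ large, and conversely. More usefully, a $K_4$- or $K_{2,3}$-minor of $X$ is a finite subgraph phenomenon (after contracting, only finitely many branch vertices and paths are involved), so it would survive into some quotient $X/{\lsim{\ell}}$ — more precisely, if $X$ has a $K_4$- or $K_{2,3}$-minor then so does $X/{\lsim{\ell}}$ for all sufficiently large~$\ell$, since contracting the components of $X \smallsetminus \{$words of length $\le \ell\}$ does not destroy a minor that is ``concentrated'' near the identity, and by vertex-transitivity we may assume it is. So I would first reduce to showing that each finite quotient $X/{\lsim{\ell}}$ is outerplanar (equivalently, has no $K_4$- or $K_{2,3}$-minor).

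Now I would analyze the structure of $X/{\lsim{\ell}}$ directly, by induction on~$\ell$, mirroring the induction already carried out in the proof of \cref{CycleToHamCircle}. At level $\ell$, the $s$-edges form a cycle $\cay(F_n;s^{\pm1})/{\lsim{\ell}}$ (shown there), and the $A^{\pm1}$-edges form the tree $\cay(F_n;A^{\pm1})/{\lsim{\ell}}$ restricted to words of length $\le \ell$, with the length-$\ell$ words as leaves; moreover each leaf $[v]_\ell$ of the tree sits on the $s$-cycle, and the whole picture is planar with the $s$-cycle bounding the outer face and the finite tree hanging inside it. The induction step replaces each leaf $[v]_\ell$ by the local path described in \cref{CycleToHamCircle} together with the short tree-stubs $[v]_\ell \edge [va]_{\ell+1}$ for the admissible letters~$a$; I must check this local replacement preserves the property ``planar, with the $s$-edges forming the outer boundary cycle and all $A^{\pm1}$-edges inside'', which is a finite, explicit check using the two cases $s = a_1^2 \cdots a_n^2$ and $s = [a_1,a_2]\cdots[a_{n-1},a_n]$ (or, more generally, any $s$ for which $X_1^s$ is a cycle). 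Outerplanarity of all the finite quotients then gives no $K_4$- or $K_{2,3}$-minor in any quotient, hence none in~$X$, hence $X$ is outerplanar by Heuer.

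The main obstacle I anticipate is the ``minor descends to a quotient'' step: one must argue carefully that a finite subgraph model of $K_4$ or $K_{2,3}$ in $X$, after translating it (by vertex-transitivity) to involve only words of bounded length and then choosing $\ell$ larger than that bound, really does yield a model in $X/{\lsim{\ell}}$ — the subtlety being that branch sets of the model might use infinite paths that run out through an end, and one has to see that such a path, after contraction, becomes a single vertex that can be routed around. A clean way to handle this is to phrase everything in terms of edge cuts instead: show via \cref{FnCut} and \fullcref{2factor}{even}-style reasoning that the topological minors $K_4$, $K_{2,3}$ would force an odd finite edge cut or a forbidden crossing pattern, contradicting the planar structure of every $X/{\lsim{\ell}}$. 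Once this bridge is in place, the remaining work is the bookkeeping of the induction step, which is routine given the explicit form of $X_1^s$ in the two cases of \cref{FreeUHCEg}.
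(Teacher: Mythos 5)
Your proposal is correct and follows essentially the same route as the paper: invoke Heuer's criterion, get $2$-connectedness from the hamiltonian circle, reduce the excluded-minor condition to the finite quotients $X/{\lsim{\ell}}$, and prove those are outerplanar by the same induction on~$\ell$ that underlies \cref{CycleToHamCircle} (each $[v]_\ell$ expands to a fan on consecutive vertices of the hamiltonian cycle, attached to its parent by one tree edge). The ``minor descends to a quotient'' point you flag is handled in the paper by the standard fact that a finite minor of a locally finite graph admits a model with finite branch sets, after which the quotient map is injective on the model for $\ell$ large.
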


\begin{proof}
Let $X = \cay \bigl( F_n; A^{\pm1} \cup \{s^{\pm1}\} \bigr)$ and $C = \cay(F_n; s^{\pm1})$, so (by the proof of \cref{CycleToHamCircle}) $C/{\lsim{\ell}}$ is a hamiltonian cycle in $X/{\lsim{\ell}}$, for every~$\ell$.

We know from \cref{CycleToHamCircle} that $X$ has a hamiltonian circle. By using \cref{2factor}, it is easy to see that this implies $X$ is $2$-connected \cite[Cor.~2.9]{Heuer}.
Thus, by \fullcref{HeuerOuterplanar}{iff}, we only need to show that $X$ has no minor that is isomorphic to either $K_4$ or $K_{2,3}$. Since any finite minor of~$X$ is isomorphic to a minor of $X/{\lsim{\ell}}$ for all sufficiently large~$\ell$, it suffices to show (by induction) that $X/{\lsim{\ell}}$ is outerplanar. 

\emph{Base case.} The graph $X/{\lsim{1}}$ is obtained from the cycle $C/{\lsim{1}}$ by adding an edge joining the vertex~$[1]_1$ to each of the other vertices of the cycle. This is an outerplanar graph.

\emph{Induction step.}
Let $v$ be a word of length~$\ell$. 
When passing from $X/{\lsim{\ell}}$ to $X/{\lsim{\ell+1}}$, the vertex $[v]_\ell$ expands to the set of all vertices of the form $[va]_{\ell+1}$, for $a \in A \cup \{1\}$, such that $a$ is not the inverse of the last letter of~$v$. We know from the proof of \cref{CycleToHamCircle} that this is a set of consecutive vertices in the hamiltonian cycle~$C/{\lsim{\ell+1}}$. Therefore, the subgraph of $X/{\lsim{\ell+1}}$ that is induced by this set of vertices is a path, together with an edge from~$[v]_{\ell+1}$ to each of the other vertices of this path. Furthermore, the only other edges of $X/{\lsim{\ell+1}}$ that are incident with this subgraph are two edges of the hamiltonian cycle $C/{\lsim{\ell+1}}$ (which are incident with the endpoints of the path) and an edge from~$[v]_{\ell+1}$ to~$[w]_{\ell+1}$, where $w$ is obtained from~$v$ by removing the last letter. These edges can all be drawn inside (or on) the hamiltonian cycle (see \cref{FnOuterPlanarFig}), so $X/{\lsim{\ell+1}}$ is outerplanar.
\end{proof}

\begin{figure}[ht]
    \centering
    \begin{tikzpicture}
	       \draw (-3,0) node [circle,fill, inner sep=2pt] {};
	        \draw (-3,0) node[above] {$[v]_\ell$};
	\draw[line width=0.5mm, dashed] (-4,-0.22) arc(120:60:2cm and 1.6cm);
	\draw[line width=0.5mm, dashed] (-3,-0.22) arc(3:-90:1cm and 0.75cm);

	\draw (2,0) node[above] {$[v]_{\ell+1}$};
        \draw[line width=0.5mm] (0,0)--(5,0);
	    \foreach \i in {0,...,5}{
	       \draw (\i,0) node [circle,fill, inner sep=2pt] {};}
	\draw[line width=0.5mm] (2,0) arc(0:-180:0.5cm and 0.2cm);
	\draw[line width=0.5mm] (2,0) arc(0:-180:1cm and 0.35cm);
	\draw[line width=0.5mm] (2,0) arc(-180:0:0.5cm and 0.2cm);
	\draw[line width=0.5mm] (2,0) arc(-180:0:1cm and 0.35cm);
	\draw[line width=0.5mm] (2,0) arc(-180:0:1.5cm and 0.6cm);	
	\draw[line width=0.5mm, dashed] (0,0) arc(90:120:1.5cm and 1cm);
	\draw[line width=0.5mm, dashed] (5,0) arc(90:60:1.5cm and 1cm);
	\draw[line width=0.5mm, dashed] (2,0) arc(3:-90:2.5cm and 1cm);
    \end{tikzpicture}
    \caption{The vertex $[v]_\ell$ of $X/{\lsim{\ell}}$ expands to a set of consecutive vertices of the hamiltonian cycle in $X/{\lsim{\ell+1}}$.}
    \label{FnOuterPlanarFig}
\end{figure}
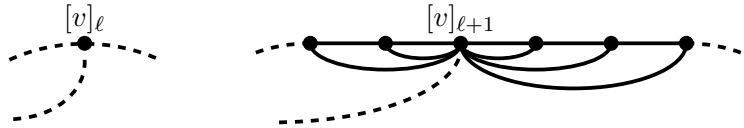




\VirtFree*

\begin{proof}
\fullCref{FreeUHCEg}{squares} provides a $(2n + 2)$-element symmetric generating set~$S_0$ of $F_n$, such that $\cay(F_n; S_0)$ has a hamiltonian circle. Let $R = \{r_1, \ldots, r_m\}$ be a set of coset representatives of $F_n \backslash G$. Then set
    \[ S \coloneqq \{\, r_{j-1}^{-1} r_j \mid j = 2,\ldots,m \,\}^{\pm1}
        \cup \{\, r_m^{-1} s r_1 \mid s \in S_0 \,\}^{\pm1} . \]
Note that $\#S \le 2(m-1) + 2(2n + 2) = 2m + 4n + 2$.

To complete the proof of the bound $2m + 4n + 2$, we show that $\cay(G; S)$ has a hamiltonian circle. Let $C$ be a hamiltonian circle in $\cay(F_n; S_0)$, so each connected component of~$C$ is a two-way-infinite path. For each component~$P$, we will construct a two-way-infinite hamiltonian path~$P'$ in the subgraph of $\cay(G; S)$ that is induced by $V(P) \cdot R$. Then it is not hard to see that the union of these new two-way-infinite paths is a hamiltonian circle in $\cay(G; S)$. 

To construct~$P'$, write $P = ( v_k \mid k \in \Z)$. Then 
                \[
                P' = \bigl( (v_k r_j : j = 1,...,m) : k \in \Z \bigr).
                \]
In other words, $P'$ is the following two-way-infinite path:
\[
\cdots \edge v_0 r_1 \edge  v_0 r_2 \edge v_0 r_3 \edge \cdots \edge v_0 r_m \edge v_1 r_1 \edge v_1 r_2 \edge \cdots
\]  

Now, suppose $F_n \trianglelefteq G$. Since $m = \#(G/F_n)$, there is a generating set $S_1$ of the finite group $G/F_n$, such that $\cay(G/F_n; S_1^{\pm1})$ has a hamiltonian path $(s_i \mid 1 \le i \le m-1)$, and $\#S_1 \le \log_2 m$ \cite[Thm.~1]{PakRadoicic}. For each $s \in S_1$, let $\hat{s}$ be a representative of~$s$ in~$G$; then let $\widehat{S_1} = \{\, \hat{s} \mid s \in S_1 \,\}$. Also let $r = \hat s_1 \hat s_2 \cdots \hat s_{m-1}$. Now, let
    \[ S' \coloneqq \widehat{S_3}^{\pm1} \cup (r^{-1} S_0)^{\pm1} . \]
Note that
\[ \#S' \le 2 \, \log_2 m + 2(2n + 2) = (2 + 2 \, \log_2 m) + 4n + 2 .\]

To complete the proof, we show that $\cay(G; S')$ has a hamiltonian circle. Note that, since $(s_i \mid 1 \le i \le m-1)$ is a hamiltonian path in $\cay(G/F_n; S_1^{\pm1})$, we know that
    \[ R \coloneqq \{\, 1, \, \hat s_1, \, \hat s_1 \hat s_2 , \, \hat s_1 \hat s_2 \hat s_3 , \ldots, \, \hat s_1 \hat s_2 \hat s_3  \cdots \hat s_{m-1} = r \,\}\]
is a set of coset representatives of $G/F_n$ in~$G$. For each connected component~$P$ of~$C$, we construct a two-way-infinite path~$P'$ exactly as above (with $r_1 = 1$ and $r_m = r$), so that the union of these new two-way-infinite paths is a hamiltonian circle in $\cay(G; S')$.
\end{proof}

It is known \cite[Cor.~22]{MiraftabMoghadamzadeh} that if a Cayley graph has a hamiltonian circle, then it admits a nowhere zero $\Z_4$-flow. Hence, the following question is related to the work in this \lcnamecref{F2Sect}.

\begin{question}
Which connected Cayley graphs of~$F_2$ (or of a more general free group) admit a nowhere zero $\Z_4$-flow? In particular, is there a connected Cayley graph of $F_2$ that does \emph{not} admit a nowhere zero $\Z_4$-flow?
\end{question}

\section{A converse for some Cayley graphs of \texorpdfstring{$F_n$}{Fn}} \label{F2Sect}

This \lcnamecref{F2Sect} proves \cref{F_2} of the Introduction:

\forthmain*

 The direction~($\Leftarrow$) is provided by \cref{FreeUHCEg}, so we will consider only~($\Rightarrow$).
 
\begin{notation}
We retain the notation of \cref{FnNotation}.
\end{notation}

\subsection{We will first prove \fullref{F_2}{degree}} \label{F2Sect-Fn}

\begin{assumption}
In this \lcnamecref{F2Sect-Fn}, we assume:
    \begin{enumerate}
    \item $\cay(F_n; s^{\pm1})$ is a hamiltonian circle in $\cay(F_n; S)$, 
    and
    \item $\#_a^\pm(s) = 2$ for all $a \in A$.
    \end{enumerate}
This implies $X_1^s$ is a cycle (cf.~\cref{DegInGr}).
\end{assumption}

For $w \in F_n$, \cref{X1s} defines $X_1^w$ to be a graph whose vertex set is $A^{\pm1} \cup \{1\}$. However, 
if $w$ happens to be in $F_k$, for some $k < n$, then the notation $X_1^w$ is ambiguous, because it does not specify whether the vertex set is $\{a_1^{\pm1},\ldots,a_k^{\pm1}, 1\}$ or $\{a_1^{\pm1}, \ldots, a_n^{\pm1}, 1 \}$. Hence, the following modification of \cref{X1s} is natural:

\begin{notation}
For a reduced word $w \in F_n$, let $A_w$ be the smallest subset of~$A$, such that $w \in \langle A_w \rangle$. (In other words, $A_w$ is the  of elements of~$A$ that appear as a letter in either~$w$ or~$w^{-1}$.) Then 
    \[ \text{$\XX_1^w$ is the subgraph of~$X_1^w$ that is induced by $A_w^{\pm1} \cup \{1\}$.} \]
\end{notation}

\begin{lem} \label{uvCycle}
Suppose $s = uv$, where $u \in \langle a_1, \ldots, a_k \rangle$ and $v \in \langle a_{k+1}, \ldots, a_n \rangle$. Then $\XX_1^u$ and $\XX_1^v$ are cycles. 
\end{lem}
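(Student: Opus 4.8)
The plan is to show that $\XX_1^u$ and $\XX_1^v$ are each $2$-regular and connected, hence cycles. The key observation is that since $s = uv$ is a reduced word with $u$ and $v$ using disjoint letter-sets (so no cancellation occurs at the junction), the edge list $E(X_1^s)$ described in \cref{X1s} splits cleanly along the boundary between $u$ and~$v$.

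\textbf{Step 1: Degree count.}  Write $u = s_1 \cdots s_j$ and $v = s_{j+1} \cdots s_r$, so $s = s_1 \cdots s_r$ is the reduced word for~$s$. Since $\#_a^\pm(s) = 2$ for every $a \in A$, the cycle $X_1^s$ has every vertex of degree~$2$ (this is the content of the Assumption, via \cref{DegInGr}). For a letter $a \in A_u^{\pm1}$, the occurrences of $a$ and $a^{-1}$ in $s$ all lie inside the $u$-block $s_1, \ldots, s_j$ (because $v$ contains no letters from $A_u$); consequently, the two edges of $X_1^s$ incident with the vertex $a$ are among $\{1 \edge s_1\} \cup \{ s_i^{-1} \edge s_{i+1} : 1 \le i < j\} \cup \{s_j^{-1} \edge \text{(next)}\}$, i.e., edges arising from consecutive pairs within~$u$ or from the two end-edges touching~$1$. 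Comparing with the definition of $X_1^u$ (whose edge set is $\{1 \edge s_1\} \cup \{s_i^{-1} \edge s_{i+1} : 1 \le i < j\} \cup \{s_j^{-1} \edge 1\}$), one checks that for each $a \in A_u^{\pm1}$ the edges incident with $a$ in $\XX_1^u$ are in bijection with those incident with $a$ in $X_1^s$; hence $a$ has degree~$2$ in $\XX_1^u$. Likewise every $a \in A_v^{\pm1}$ has degree~$2$ in $\XX_1^v$. Finally, the vertex~$1$ in $\XX_1^u$ is incident with exactly the two edges $1 \edge s_1$ and $s_j^{-1} \edge 1$, so it too has degree~$2$ (and similarly for~$1$ in $\XX_1^v$). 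Thus both $\XX_1^u$ and $\XX_1^v$ are $2$-regular.

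\textbf{Step 2: Connectedness.}  Since a $2$-regular finite graph is a disjoint union of cycles, it remains to rule out that either $\XX_1^u$ or $\XX_1^v$ is disconnected. Here I would use that $X_1^s$ is a \emph{single} cycle: traverse $X_1^s$ starting from the edge $1 \edge s_1$. Because $v$ introduces no cancellation with~$u$, the portion of the walk that visits vertices of $A_u^{\pm1}$ together with the two end-edges at~$1$ traces out exactly a closed walk in $\XX_1^u$, and symmetrically for $\XX_1^v$ through $A_v^{\pm1}$; in the cycle $X_1^s$ these two closed walks are joined only at the vertex~$1$. If $\XX_1^u$ were a union of $\ge 2$ cycles, then $X_1^s$ would have to be disconnected as well (each extra cycle of $\XX_1^u$ contributes a component of $X_1^s$ avoiding~$1$), contradicting that $X_1^s$ is a cycle. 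Hence $\XX_1^u$ is connected, i.e., a single cycle, and likewise $\XX_1^v$.

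\textbf{Main obstacle.}  The substantive point — and the step I expect to need the most care — is making Step~2 precise: one must argue cleanly that the Hamiltonicity of the single cycle $X_1^s$ forces each of the two ``halves'' $\XX_1^u$, $\XX_1^v$ to be connected, rather than merely $2$-regular. The cleanest way is probably to exhibit the cycle $\XX_1^u$ explicitly: the unique traversal of $X_1^s$ restricted to steps within the $u$-block, read off as $1 \stackrel{s_1}{\to} s_1 \edge \cdots$, visits every vertex of $A_u^{\pm1} \cup \{1\}$ before returning to~$1$ (since $X_1^s$ visits all of $A^{\pm1} \cup \{1\}$ and the $u$-vertices are consecutive in the natural ordering of this traversal once we observe no $v$-letter intervenes). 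Everything else is a routine bookkeeping check on the edge multiset from \cref{X1s}.
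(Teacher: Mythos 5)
Your proof is correct and follows essentially the same route as the paper: both arguments hinge on the observation that the edge multiset of $X_1^s$ splits cleanly into the interior edges of the $u$-block, the interior edges of the $v$-block, and the three connecting edges (the edge at $1$ to the first letter of~$u$, the junction edge between the blocks, and the edge from the last letter of~$v$ back to~$1$), so that $X_1^s$ being a cycle forces each block, with $1$ re-attached, to be a cycle. The paper dispatches the final step in one line (a cycle minus three edges is three paths), whereas you verify it via $2$-regularity plus ruling out components avoiding~$1$ --- the same idea with the bookkeeping made explicit.
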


\begin{proof}
Let $x_1$ and~$y_!$ be the first letters of $u$ and~$v$ (respectively, and let $x_2$ and~$y_2$ be the last letters. Also let $U$ and~$V$ be the subgraphs of~$X_1^s$ induced by $\{a_1, \ldots, a_k\}$ and $v \in \langle a_{k+1}, \ldots, a_n$, respectively. Then 
    \[ E(X_1^s) = E(U) \cup E(V) 
        \cup \{\, 1\edge x_1, \ y_1^{-1} \edge x_2, \ y_2^{-1} \edge 1 \,\} . \]
Therefore, it is clear that $X_1^s$ is a cycle if and only if $U$ and~$V$ are paths, which is the same as saying that $\XX_1^u$ and~$\XX_1^v$ are cycles.
\end{proof}

\begin{prop} \label{CommutatorConverse}
If $s \in [F_n, F_n]$, then 
    \begin{enumerate}
    \item \label{CommutatorConverse-even}
    $n$ is even, 
    and
    \item \label{CommutatorConverse-aut}
    some automorphism of~$F_n$ transforms $s$ to $[a_1, a_2] [a_3, a_4] \cdots [a_{n-1}, a_n]$.
    \end{enumerate} 
\end{prop}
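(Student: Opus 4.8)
The plan is to establish part~(1) by an elementary parity computation on a permutation naturally attached to the cycle $X^s_1$, and part~(2) by induction on~$n$, peeling off one commutator $[a_1,a_2]$ at a time. \textit{Setup.} Since $s\in[F_n,F_n]$, every generator has exponent sum~$0$ in~$s$, so together with $\#^{\pm}_a(s)=2$ this forces each~$a_i$ to occur exactly once as~$a_i$ and exactly once as~$a_i^{-1}$ in the reduced word $s=s_1s_2\cdots s_{2n}$. The conjunction of the hypotheses (the standing assumption that $\cay(F_n;s^{\pm1})$ is a hamiltonian circle, together with $s\in[F_n,F_n]$) is invariant under automorphisms of~$F_n$ by \cref{ChangeS,CycleToHamCircle}, so I may conjugate and assume $s$ is cyclically reduced (each $a_i$ still occurring once positively and once negatively). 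Label the vertices of~$X^s_1$ by $\{0,1,\dots,2n\}$, where $0$ is the identity vertex and $j$ is the letter~$s_j$.

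\textit{Part (1).} Let $\sigma=(0\,1\,2\,\cdots\,2n)$ be the cyclic shift of $\{0,\dots,2n\}$, and let $\tau$ be the involution fixing~$0$ and sending $j\ (1\le j\le 2n)$ to the position of~$s_j^{-1}$; since $s_j\ne s_j^{-1}$, the permutation~$\tau$ is a product of exactly~$n$ disjoint transpositions. Rewriting the edge list of~$X^s_1$ from \cref{X1s} in these coordinates shows that each edge has the form $\{\tau(i),\sigma(i)\}$ for some $0\le i\le 2n$, whence the two neighbours of a vertex~$v$ are $\sigma\tau(v)$ and $(\sigma\tau)^{-1}(v)$; thus $X^s_1$ is the disjoint union of the cycles of the permutation~$\sigma\tau$. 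As $X^s_1$ is a single cycle on $2n+1$ vertices, $\sigma\tau$ is a single $(2n+1)$-cycle, hence an \emph{even} permutation. Since $\sigma$ is likewise even and $\mathrm{sgn}(\tau)=(-1)^n$, we get $(-1)^n=\mathrm{sgn}(\sigma\tau)=1$, i.e.\ $n$ is even.

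\textit{Part (2).} Induct on the even integer~$n$. For $n=2$ all four generators $a_1^{\pm1},a_2^{\pm1}$ occur once, and after conjugating so that $s_1=a_1$ and, if necessary, applying $a_2\mapsto a_2^{-1}$ (\fullcref{AutFn}{sign}), reducedness forces $s=a_1a_2a_1^{-1}a_2^{-1}=[a_1,a_2]$. For the inductive step ($n\ge4$) the goal is to produce an automorphism~$\varphi$ of~$F_n$ with
\[ \varphi(s)=[a_1,a_2]\cdot s',\qquad s'\in\langle a_3,\dots,a_n\rangle, \]
where $s'$ is reduced and uses each of $a_3,\dots,a_n$ once positively and once negatively. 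Granting this, write $\varphi(s)=uv$ with $u=[a_1,a_2]\in\langle a_1,a_2\rangle$ and $v=s'\in\langle a_3,\dots,a_n\rangle$; then \cref{uvCycle} gives that $\XX_1^{s'}$ is a cycle, so—via \cref{CycleToHamCircle,DegInGr} inside $\langle a_3,\dots,a_n\rangle\cong F_{n-2}$, using $s'\in[F_{n-2},F_{n-2}]$—the induction hypothesis supplies an automorphism of $\langle a_3,\dots,a_n\rangle$ carrying $s'$ to $[a_3,a_4]\cdots[a_{n-1},a_n]$. Extending it by the identity on $\langle a_1,a_2\rangle$ (legitimate since $F_n=\langle a_1,a_2\rangle*\langle a_3,\dots,a_n\rangle$, or by iterating \fullcref{AutFn}{mult}) and composing with~$\varphi$ transforms $s$ into $[a_1,a_2][a_3,a_4]\cdots[a_{n-1},a_n]$.

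\textit{The main obstacle} is the peeling step. The word $s=a_1a_2a_3a_4a_1^{-1}a_2^{-1}a_3^{-1}a_4^{-1}$ (with $n=4$) satisfies all hypotheses yet admits \emph{no} factorization $s=uv$ into complementary generator blocks, so $\varphi$ cannot be found without genuinely using the Nielsen-type automorphisms of \cref{AutFn}, which act on the chord diagram pairing each~$a_i$ with~$a_i^{-1}$ by ``chord slides''. I would use the cycle $X^s_1$ (equivalently, the permutation~$\sigma\tau$ from part~(1)) to locate a generator~$a$ whose chord can be slid to become ``innermost'', so that after an automorphism $s$ becomes cyclically $a\,b\,a^{-1}b^{-1}w$ with $w$ a reduced word in the remaining $n-2$ generators; relabelling via \fullcref{AutFn}{perm} and \fullcref{AutFn}{sign} then puts this in the form $[a_1,a_2]s'$. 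Making the slide precise—and, crucially, showing that the hypothesis ``$X^s_1$ is a cycle'' forces such a slide to exist—is the heart of the matter, and is essentially the classification, up to Nielsen equivalence, of the genus-$\tfrac n2$ chord diagrams arising here.
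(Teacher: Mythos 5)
Your part~(1) is correct and is essentially the paper's own argument: the paper encodes the same parity computation as the identity $\sigma = \chi \cdot (a_1^{\vphantom{-1}}\ a_1^{-1})(a_2^{\vphantom{-1}}\ a_2^{-1})\cdots(a_n^{\vphantom{-1}}\ a_n^{-1})$, where $\chi$ is the cyclic permutation of $A^{\pm1}\cup\{1\}$ determined by the cycle $X_1^s$, and concludes that $n$ is even because $\sigma$ and $\chi$ are both $(2n+1)$-cycles and hence even.

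Part~(2), however, has a genuine gap, and you have named it yourself: your induction needs an automorphism $\varphi$ with $\varphi(s)=[a_1,a_2]\,s'$ and $s'\in\langle a_3,\dots,a_n\rangle$, and you leave ``making the slide precise'' as a plan rather than a proof. This step is the entire content of the proposition and cannot be waved through: besides your own example $a_1a_2a_3a_4a_1^{-1}a_2^{-1}a_3^{-1}a_4^{-1}$, note that the word $a_1a_3a_2a_4a_3^{-1}a_1^{-1}a_4^{-1}a_2^{-1}$ (the image of $[a_1,a_2]$ under $a_1\mapsto a_1a_3$, $a_2\mapsto a_2a_4$) lies in $[F_4,F_4]$ with $\#^{\pm}_{a_i}=2$ for every~$i$, yet is automorphically equivalent to $[a_1,a_2]$ and therefore \emph{not} to $[a_1,a_2][a_3,a_4]$; so the hypothesis that $X_1^s$ is a cycle must be used in an essential, quantitative way in the peeling step, exactly as you suspect.

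The paper fills this gap without ever demanding an up-front split into complementary free factors. It picks a letter $a$ whose two occurrences in~$s$ are as close together as possible and arranges for $a$ to be the first letter, writing $s=abua^{-1}w$; minimality of the $a$-chord forces $b^{-1}$ to lie in~$w$, and the Nielsen move $b\mapsto bu^{-1}$ (an instance of \fullcref{AutFn}{mult}) collapses the chord to give $s=aba^{-1}w_1b^{-1}w_2$. If $w_1$ and~$w_2$ share no generators, \cref{uvCycle} and induction finish; otherwise the interleaving/collapsing step is iterated to normalize $s$ to the linked chain $a_1a_2a_1^{-1}a_3a_2^{-1}a_4a_3^{-1}\cdots a_{n-1}a_{n-2}^{-1}a_na_{n-1}^{-1}a_n^{-1}$, from which the explicit automorphism $a_{n-1}\mapsto a_{n-2}^{-1}a_{n-1}a_{n-2}$, $a_n\mapsto a_na_{n-2}$ splits $[a_{n-1},a_n]$ off the right-hand end, setting up an induction on~$n$. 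To complete your version you would need to supply this ``innermost chord'' normalization (or prove, rather than merely invoke, the classification of maximal-genus orientable quadratic words that you correctly identify as the underlying statement); as written, the inductive step does not get off the ground.
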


\begin{proof}
\pref{CommutatorConverse-even}
Since $s \in [F_n, F_n]$ and $\#_a^\pm = 2$ for all $a \in A$, we see that $a$ and~$a^{-1}$ each appear exactly once in~$s$, for each $a \in A$. Therefore, all of the letters of~$s$ are distinct (and the length of~$s$ is exactly~$2n$), so if we write $s = s_1 s_2 \cdots s_{2n}$ (as a reduced word), then we can define a permutation $\sigma$ of $A^{\pm1} \cup \{1\}$ by
    \[ \sigma(s_i) = s_{i+1} , \]
where we use the convention that $s_0 = s_{2n+1} = 1$. Then
    \[ \sigma = (1 \ s_1 \ s_2 \ \cdots \ s_{2n})\]
is a $(2n + 1)$-cycle.

We are assuming that the graph $X_1^s$ is a cycle, so it also determines a cyclic permutation of $A^{\pm1} \cup \{1\}$. Specifically, if we call this permutation~$\chi$, then we see from \cref{X1s} that
    \[ \chi(s_i^{-1}) = s_{i+1} = \sigma(s_1) . \]
Therefore
    \[ \sigma = \chi \cdot (a_1^{\vphantom{-1}} \ a_1^{-1}) \, (a_2^{\vphantom{-1}} \ a_2^{-1}) \cdots (a_n^{\vphantom{-1}} \ a_n^{-1}) . \]
However, the two permutations $\sigma$ and~$\chi$ are $(2n + 1)$-cycles, so we know that both of them are even permutations. Therefore, the product of $2$-cycles that appears in the above formula must also be an even permutation. This means that the number of $2$-cycles in the product is even. I.e., $n$~is even.

\medbreak

\pref{CommutatorConverse-aut}
Choose some $a \in A$, such that the distance between the appearance of~$a$ in~$s$ and the appearance of~$a^{-1}$ is as small as possible. We may assume, without loss of generality, that $a$ is the first letter of~$s$. Now, let $b$ be the letter that follows~$a$. The minimality implies that the appearance of~$b^1$ is \emph{after}~$a^{-1}$, so we have $s = a b u a^{-1} w$, where $b^{-1}$ is in~$w$. Then $b \mapsto b u^{-1}$ transforms $s$ to $a b a^{-1} w'$. We may write this as $aba^{-1}w_1 b^{-1} w_2$.

If $w_1$ and $w_2$ have no letters in common, then $w_1$ has no letters in common with $b^{-1} w_2 aba^{-1}$, so we see from \cref{uvCycle} and induction that (after applying an appropriate group automorphism) each of these two words is a product of commutators of distinct letters. Then $s$ also has this form, as desired.

Thus, we may assume there is a letter~$c$ that appears in~$w_1$ and has its inverse in~$w_2$:
    \[ aba^{-1} w_1'c w_1'' b^{-1} w_2' c^{-1} w_2'' . \]
By applying the automorphism $c \mapsto (w_1')^{-1} c (w_2')^{-1}$, we may assume $w_1' = w_1'' = 1$, i.e., the word is
	\[ aba^{-1}c b^{-1} w_2' c^{-1} w_2'' . \]
Again, if $w_1$ and~$w_2$ have no letters in common (up to inverses), then induction applies. If not, then we can assume $w_2'$ is a single letter. Continuing in this way, we see that we may assume
	\[ s = a_1 a_2 a_1^{-1} a_3 a_2^{-1} a_4 a_3^{-1} \cdots a_{n-1} a_{n-2}^{-1} a_n a_{n-1}^{-1} a_n^{-1} . \]

By induction on~$n$, we will show there is an automorphism of~$F_n$ that transforms $s$ to the desired product of commutators.
Let $\varphi$ be the automorphism
	\[ a_{n-1} \mapsto a_{n-2}^{-1} a_{n-1} a_{n-2},
	\quad a_n \mapsto a_n a_{n-2}
	. \]
(All other $a_i$ are fixed.) Then
	\begin{align*} \varphi(s) 
	&= \varphi(a_1 a_2 a_1^{-1} a_3 \cdots a_{n-3} a_{n-4}^{-1} a_{n-2} a_{n-3}^{-1}  \cdot
	a_{n-1} a_{n-2}^{-1} a_n a_{n-1}^{-1} a_n^{-1})
    \\&= a_1 a_2 a_1^{-1} a_3  \cdots a_{n-3} a_{n-4}^{-1} a_{n-2} a_{n-3}^{-1}  
        \\ & \qquad \cdot
	(a_{n-2}^{-1} a_{n-1} a_{n-2}) a_{n-2}^{-1} (a_n a_{n-2}) (a_{n-2}^{-1} a_{n-1}^{-1} a_{n-2}) (a_{n-2}^{-1} a_n^{-1})
    \\&= a_1 a_2 a_1^{-1} a_3  \cdots a_{n-3} a_{n-4}^{-1} a_{n-2} a_{n-3}^{-1} \ \cdot \
	a_{n-2}^{-1} \cdot a_{n-1} a_n a_{n-1}^{-1} a_n^{-1}
	\\&= a_1 a_2 a_1^{-1} a_3 \cdots a_{n-3} a_{n-4}^{-1} a_{n-2} a_{n-3}^{-1}a_{n-2}^{-1} \cdot [a_{n-1}, a_n]	. \end{align*}
By the induction hypothesis, there is an automorphism of~$F_{n-2}$ that transforms the first part of this product to $[a_1, a_2] \cdots [a_{n-3}, a_{n-2}]$. So the entire product is transformed to the desired product of commutators.
\end{proof}

One final \lcnamecref{g'a^2}:

\begin{lem} \label{g'a^2}
If $a$, $b$, and~$c$ are distinct elements of~$A$, then there is an automorphism of the free group $\langle a,b,c \rangle$ that transforms $[a,b]c^2$ to $a^2 b^2 c^2$.
\end{lem}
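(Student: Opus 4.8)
The plan is to produce the automorphism explicitly, as a short composition of elementary automorphisms of the kinds supplied by \cref{AutFn}: a transposition of two generators, together with substitutions $x\mapsto g x h$ in which $g,h$ are words in the remaining two generators (such a map is visibly an automorphism, with inverse $x\mapsto g^{-1}xh^{-1}$). That an automorphism with $\varphi([a,b]c^2)=a^2b^2c^2$ must exist is no surprise: these two words are the boundary words of fundamental polygons of homeomorphic closed surfaces (Dyck's theorem: a torus connect-sum a projective plane is the connected sum of three projective planes), and on abelianizations the only constraint is that $\varphi$ send $2\bar c$ to $2(\bar a+\bar b+\bar c)$, which is realized in $GL_3(\mathbb Z)$. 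The sequence of moves below follows the classical reduction, from the proof of the classification of compact surfaces, that trades a handle together with a crosscap for three crosscaps.

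Concretely, I would start from $[a,b]c^2=aba^{-1}b^{-1}cc$ and apply, in order: (1) $c\mapsto bc$, after which $b^{-1}b$ cancels and the word becomes $aba^{-1}cbc$; (2) $b\mapsto a^{-1}b$, after which $aa^{-1}$ cancels and the word becomes $ba^{-1}ca^{-1}bc$ --- note that every generator now occurs twice with the same exponent, i.e.\ the word is in ``pure crosscap'' form; (3) $b\mapsto b(ac^{-1}a)$, chosen so the two $b$'s collide, giving $b^2ac^{-1}ac$; (4) $a\mapsto ac$, chosen so the two $a$'s collide, giving $b^2a^2c^2$; and (5) the transposition $a\leftrightarrow b$, giving $a^2b^2c^2$. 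Each of these five maps is an automorphism of $\langle a,b,c\rangle$, so their composition $\varphi$ is an automorphism with $\varphi([a,b]c^2)=a^2b^2c^2$; tracing it through on generators gives $\varphi(a)=bc$, $\varphi(b)=c^{-1}b^{-1}ab^2c$, $\varphi(c)=c^{-1}b^{-1}ab^2c^2$, which can also be verified directly by reducing $\varphi(a)\varphi(b)\varphi(a)^{-1}\varphi(b)^{-1}\varphi(c)^2$.

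Every intermediate check is a one-line free reduction, so the only real content is choosing the moves so that the cancellations line up. The step I expect to be the crux --- and the one where the surface-topology input is genuinely used --- is the passage from the handle $aba^{-1}b^{-1}$ to pure-crosscap form: it is exactly moves (1) and (2) together (first manufacture an extra $b$ out of the crosscap $c^2$, then fold the handle across it) that convert the orientable handle into crosscaps. Once the word is in pure-crosscap form, moves (3)--(5) are the routine ``bring the repeated letters together and normalize'' part of the classification algorithm and present no difficulty.
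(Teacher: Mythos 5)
Your proof is correct: each of the five substitutions is an automorphism of $\langle a,b,c\rangle$ (being either a Nielsen move $x\mapsto gxh$ with $g,h$ in the other generators, or a permutation of generators), and the chain of free reductions $aba^{-1}b^{-1}c^2 \to aba^{-1}cbc \to ba^{-1}ca^{-1}bc \to b^2ac^{-1}ac \to b^2a^2c^2 \to a^2b^2c^2$ checks out, as does the stated composite $\varphi$. This is essentially the same argument as the paper's, which likewise exhibits an explicit composition of elementary automorphisms (via the intermediate words $abcbac$, $bcbaca$, $b^2c^{-1}aca$, $b^2c^{-2}a^2$) implementing the classical handle-plus-crosscap reduction; only the particular moves differ.
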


\begin{proof} We have
    \begin{align*}
    [a,b]c^2
    &= a b a^{-1} b^{-1} c^2 
 	\stackrel{c \mapsto bac}\longrightarrow a b c b ac
	\stackrel{\text{conjugate by~$a^{-1}$}}\longrightarrow b c b aca
	\\&\stackrel{b \mapsto b c^{-1}}\longrightarrow b^2 c^{-1} aca
	\stackrel{a \mapsto c^{-1}a}\longrightarrow b^2 c^{-2} a^2
	\stackrel{\text{\ref{AutFn}(\ref{AutFn-perm},\ref{AutFn-sign})}}\longrightarrow a^2 b^2 c^2
	. \qedhere \end{align*}
\end{proof}

We can now complete the proof of \fullcref{F_2}{degree}.

\begin{proof}[\bf Proof of \fullcref{F_2}{degree}]
As mentioned at the start of this \lcnamecref{F2Sect}, the direction~($\Leftarrow$) was established in \cref{FreeUHCEg}, so we will only prove~($\Rightarrow$). Also, we may assume $s \notin [F_n, F_n]$, for otherwise \cref{CommutatorConverse} applies. 

We claim that some automorphism of~$F_n$ transforms $s$ to a word of the form $g a_{k+1}^2 a_{k+2}^2 \cdots a_n^2$, where $g \in [F_k, F_k]$. The proof is by induction. Since $s \notin [F_n, F_n]$, there is some element of~$A$ whose appearances in~$s$ both have the same exponent; assume, without loss of generality, that both appearances of~$a_n$ have exponent~$1$, and that $a_n$ is the last letter of~$s$, so we may write
    \[ s = w_1 a_n w_2 a_n . \]
Then the automorphism $a_n \mapsto w_2^{-1} a_n$ transforms~$s$ to $w_1 w_2^{-1} a_n^2 = w a_n^2$. By \cref{uvCycle}, we know that $\XX_1^w$ is a cycle. Now:
    \begin{itemize}
    \item If $w \in [F_{n-1},F_{n-1}]$, then the proof of the claim is complete.
    \item If not, then the induction hypothesis provides an automorphism of~$F_{n-1}$ that transforms~$w$ to $g a_{k+1}^2 a_{k+2}^2 \cdots a_{n-1}^2$, where $g \in [F_k, F_k]$. This  automorphism transforms~$s$ to precisely the desired form.
    \end{itemize}
This completes the proof of the claim.

\medbreak

From \cref{uvCycle}, we know that $\XX_1^g$ is a cycle.
Therefore, \cref{CommutatorConverse} provides an automorphism of~$F_k$ that transforms 
    \[ \text{$g a_{k+1}^2 a_{k+2}^2 \cdots a_n^2$ to $[a_1,a_2] [a_3,a_4] \cdots [a_{k-1},a_k] a_{k+1}^2 a_{k+2}^2 \cdots a_n^2$.} \]
Repeated application of \cref{g'a^2} transforms this to $a_1^2 a_2^2 \cdots a_n^2$.
\end{proof}

\subsection{Next, we will prove \fullref{F_2}{n=2}}

\begin{notation}
We will use $a$ and~$b$ for the standard generators of~$F_2$, instead of $a_1$ and~$a_2$.
\end{notation}

\begin{definition}[cf.~{\cite{WhiteheadAlg}}]
Let us say that an element $s$ of~$F_n$ is \emph{automorphically minimal} if $\ell(s) \leq  \ell \bigl( \varphi (s) \bigr)$, for all $\varphi \in \autt(F_n)$.
\end{definition}

\begin{lem} \label{BlocksAreConnected}
Let $s = s_1 s_2 \cdots s_n$ be a nontrivial reduced word in $F_2$, such that $s$ is automorphically minimal and $s_1 \neq s_n$.
Also let $\ell \ge 2$ and $v \in F_2$.
Then the subgraph of $\cay(F_2; s^{\pm1})/{\lsim{\ell}}$ that is induced by $\{\, [x]_\ell \mid x \lsim{\ell-1} v \,\}$ is connected.
\end{lem}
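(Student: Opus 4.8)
The plan is to reduce the statement to a connectivity property of the finite graph $X_1^s$ of \cref{X1s}, and then to deduce that property from Whitehead's classical lemma on the Whitehead graph of an automorphically minimal word.

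If $\ell(v) < \ell - 1$ then $[v]_{\ell-1} = \{v\}$ and the induced subgraph is a single vertex, so assume $\ell(v) \ge \ell - 1$ and let $p$ be the reduced word of length $\ell - 1$ that is the common initial segment of the words in $[v]_{\ell-1}$; write $c \in A^{\pm1}$ for its last letter (this is where $\ell \ge 2$ enters). The vertices of the subgraph in question are exactly $[p]_\ell$ together with the $[pd]_\ell$ for $d \in A^{\pm1} \sm \{c^{-1}\}$. Following the induction step in the proof of \cref{CycleToHamCircle}, I would check that $x \mapsto [px]_\ell$ (with $1 \mapsto [p]_\ell$) is a graph isomorphism from the subgraph $\XX$ of $X_1^s$ induced by $\bigl(A^{\pm1} \sm \{c^{-1}\}\bigr) \cup \{1\}$ onto this subgraph: an edge $[w]_1 \edge [wt]_1$ of $X_1^s$ (with $t \in \{s, s^{-1}\}$) whose endpoints both lie in $\XX$ lifts to the edge $pw \edge pwt$ of $\cay(F_2; s^{\pm1})/{\lsim{\ell}}$, because the relevant words begin with letters other than $c^{-1}$ and so do not cancel against~$p$; and conversely every edge of the subgraph arises in this way. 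Since $c$ runs over all of $A^{\pm1}$ as $p$ varies, it therefore suffices to prove that deleting any single vertex $d \in A^{\pm1}$ from $X_1^s$ leaves a connected graph.

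By \cref{X1s}, $X_1^s$ is obtained from the Whitehead graph of $s$ — vertex set $A^{\pm1}$, with an edge $s_i^{-1} \edge s_{i+1}$ for each $i$ taken cyclically modulo $\ell(s)$ — by subdividing the wrap-around edge $s_{\ell(s)}^{-1} \edge s_1$ with the new vertex $1$ (automorphic minimality makes $s$ cyclically reduced, since conjugation is an automorphism, so this edge is not a loop). Subdividing an edge changes neither connectivity after deletion of an original vertex nor the set of original cut vertices, so for $d \in A^{\pm1}$ the graph $X_1^s$ with $d$ deleted is connected if and only if the Whitehead graph of $s$ with $d$ deleted is connected; equivalently, I must show that the Whitehead graph of $s$ is connected and has no cut vertex. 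This is where the remaining hypotheses are used: automorphic minimality together with $s \ne 1$ and $s_1 \ne s_n$ forces $\ell(s) \ge 3$ (a length-one word has $s_1 = s_n$, and a length-two reduced word with $s_1 \ne s_n$ is a primitive element, hence not minimal), forces $s$ to involve both generators (a nontrivial power of a single generator has $s_1 = s_n$), and — comparing lengths under an automorphism that would trivialize it — forces $s$ not to be a proper power of a primitive element. For such $s$, Whitehead's lemma (cf.\ \cite{WhiteheadAlg}) gives exactly that its Whitehead graph is connected and cut-vertex-free, completing the proof.

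The main obstacle is that last step: wringing connectivity and absence of cut vertices out of minimality. One can either invoke Whitehead's lemma, taking care that its classical exceptional cases are excluded by the hypotheses as above, or argue directly in $F_2$, where the Whitehead graph has at most four vertices, so that if it were disconnected or had a cut vertex one could exhibit a length-shortening Whitehead automorphism — a brief but somewhat case-heavy check. A secondary point needing care is the bookkeeping in the isomorphism of the first step (the vertex $1$, very short $v$, and multiple edges), but this follows the template of \cref{CycleToHamCircle}.
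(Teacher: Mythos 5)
Your argument is sound and takes a genuinely different route from the paper's for the combinatorial core, although the two proofs share the same skeleton. Both begin by reducing to $v$ reduced of length $\ell-1$ with last letter $c$, so that the induced subgraph is the one-level expansion of $[v]_{\ell-1}$; your lifting of edges of $X_1^s$ to this expansion is exactly the mechanism of the induction step of \cref{CycleToHamCircle}, and it is correct (for connectivity you only need the forward direction, that edges of $X_1^s$ avoiding $c^{-1}$ lift, not the full isomorphism you assert). From there the paper stays inside $F_2$: it normalizes $c=b$, uses $s_1\neq s_n$ together with cyclic reducedness (forced by minimality) to pin down the first and last letters of $s$, and runs a case analysis on which two-letter subwords ($aa$, $ab$, $a^{-1}b$, \dots) occur, converting each missing subword into an explicit length-reducing automorphism ($a\mapsto ba$ or $a\mapsto ab$) that contradicts minimality. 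You instead isolate the clean finite statement ``$X_1^s$ minus any vertex of $A^{\pm1}$ is connected,'' recognize $X_1^s$ as a subdivision of the Whitehead graph $W(s)$ of the cyclic word $s$, and appeal to Whitehead's cut-vertex lemma. This is more conceptual, explains why the hypotheses (minimality, $s_1\neq s_n$, nontriviality) are exactly what is needed, and would generalize beyond $n=2$, whereas the paper's argument is tied to the four-vertex situation. The price is in the citation: the classical cut-vertex lemma yields only the absence of cut vertices, so connectedness of $W(s)$ must be supplied separately (in $F_2$ the cyclically reduced words involving both generators whose Whitehead graph is disconnected are, up to cyclic permutation and inversion, the powers of $ab$ and of $ab^{-1}$, all excluded by minimality), and one must verify that $s$ avoids the exceptional cases of the lemma. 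You flag both points honestly; carrying out the ``brief but case-heavy check'' you mention would essentially reproduce the paper's case analysis, so the two proofs contain the same combinatorics packaged differently.
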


\begin{proof}
We may assume that $v$ is a reduced word of length~$\le \ell - 1$. In fact, we may assume the length of~$v$ is exactly~$\ell - 1$, because otherwise the set has only one vertex, so obviously induces a connected subgraph. Furthermore, we may assume, without loss of generality, that the last letter of~$v$ is~$b$, so $v = wb$, where $w$ is a reduced word of length~$\ell - 2$ (and $w$ does not end with~$b^{-1}$). 

We wish to show that the subgraph of $\cay(F_2; s)/{\lsim{\ell}}$ induced by 
    \[ \bigl\{ [wb]_\ell, [wbb]_\ell, [wba]_\ell, [wba^{-1}]_\ell \bigr\}\] 
is connected.
Note that $s_1 \neq s_n^{-1}$ (because the automorphic minimality of~$s$ implies that the conjugate $s_1^{-1} s s_1$ cannot have strictly shorter length than~$s$). Since we also assume $s_1 \neq s_n$, this implies $\{s_1^{\pm1}, s_n^{\pm1}\} = \{a^{\pm1}, b^{\pm1}\}$. Therefore, we may assume without loss of generality that 
    \[ \text{the first letter of~$s$ is~$a^{-1}$  and the last letter is either $b$ or~$b^{-1}$} \]
(perhaps after replacing $s$ with its inverse and/or interchanging $a$ with~$a^{-1}$), so 
	\[ [wb]_\ell \edge [wba^{-1}]_\ell .\]
(Writing ``$[x]_\ell \edge [y]_\ell$'' means $[x]_\ell$ is adjacent to $[y]_\ell$ in $\cay(F_2; s^{\pm1})/{\lsim{\ell}}$.)
 
\refstepcounter{caseholder} 

\begin{case}
Assume either $a a$ or its inverse appears \textup(as two consecutive letters\textup) in~$s$.
\end{case}
This implies
	\[ [wba^{-1}]_\ell \edge [wba]_\ell .\]
Now, if either $ab$ appears or $a^{-1}b$ appears, then we have either $[wba^{-1}]_\ell \edge [wbb]_\ell$ or $[wba]_\ell \edge[wbb]_\ell$. In either case, we see that the subgraph is connected, as desired.

So we may now assume that neither $ab$ nor $a^{-1}b$ appears. Therefore, there is no occurrence of~$b$ to a positive power in the word~$s$. However, we also know that $s_n \in \{b^{\pm1}$. So the last letter of`$s$ must be~$b^{-1}$. Then the first letter of~$s^{-1}$ is~$b$, so $[wb]_\ell \edge [wbb]_\ell$. Hence, the subgraph is connected.

\begin{case} \label{BlocksAreConnectedPf-NoAA}
Assume that neither $a a$ nor its inverse appears in~$s$.
\end{case}

We claim that 
	\[ [wbb]_\ell  \edge [wba]_\ell . \]
Suppose not. Then $b^{-1} a$ does not appear anywhere (in either $s$ or~$s^{-1}$). Since the first letter of~$s$ is not~$a$, this implies that every occurrence of~$a$ is preceded by~$b^{-1}$ (and every occurrence of~$a^{-1}$ is followed by~$b$), which contradicts the minimality of the length of~$s$ (because there is an automorphism~$\varphi$ of~$F_2$, such that $\varphi(a) = ba$ and $\varphi(b) = b$). 

Now:
\noprelistbreak 
	\begin{itemize}
	\item If $ab$ appears, then $[wba^{-1}]_\ell \edge [wbb]_\ell $, so the subgraph is connected, as desired.
	\item If the last letter of~$s$ is~$b^{-1}$, then the first letter of~$s^{-1}$ is~$b$, so $[wb]_\ell \edge [wbb]_\ell $, so the subgraph is connected, as desired.
	\end{itemize}
Therefore, we may assume that $ab$ does not appear in either $s$ or~$s^{-1}$, and that the last letter of~$s$ is~$b$.

We know that $ab$ does not appear in either $s$ or~$s^{-1}$, and (from the assumption of this \lcnamecref{BlocksAreConnectedPf-NoAA}) that $aa$ also does not appear. Since the last letter of~$s$ is~$b$, this implies that the same is true with $b s b^{-1}$ in the place of~$s$. Then every occurrence of~$a$ in $b s b^{-1}$ must be followed by~$b^{-1}$ (and every occurrence of~$a^{-1}$ must be preceded by~$b$). This contradicts the minimality of the length of~$s$ (because there is an automorphism~$\varphi$ of~$F_2$, such that $\varphi(a) = ab$ and $\varphi(b) = b$).
\end{proof}

\begin{proof}[\bf Proof of \fullcref{F_2}{n=2}]
Like before, we will only prove~($\Rightarrow$), because the direction~($\Leftarrow$) was established in \cref{FreeUHCEg}.
It suffices to show that $\cay(F_2; s^{\pm1})/{\lsim{1}}$ is a cycle \textup(with no multiple edges\textup), for then we see from \cref{DegInGr} that $\#_a^\pm(s) = \#_b^\pm(s) = 2$, so \fullref{F_2}{degree} applies.

For convenience, let $X_\ell^s = \cay(F_2; s^{\pm1})/{\lsim{\ell}}$ (for each $\ell \ge 1$). 
We see from \fullcref{2factor}{even} (and \cref{FnCut}) that the degree of each vertex of~$X_\ell^s$ is even. (So $X_\ell^s$ has an Euler tour.)
If $X_1^s$ is not a cycle (or has a multiple edge), this implies it has two (non-loop) edges $e_1$ and~$e_2$, such that removing these two edges from $X_1^s$ results in a connected graph. Let $\widetilde{e_1}$ and~$\widetilde{e_2}$ be the representatives of $e_1$ and~$e_2$ in $\cay(F_n; s^{\pm1})$. For each~$\ell$, let $X_\ell'$ be the graph that is obtained from~$X_\ell^s$ by removing $\widetilde{e_1}$ and~$\widetilde{e_2}$.

By the choice of $e_1$ and~$e_2$, we know that $X_1'$ is connected. Also, we see from \cref{BlocksAreConnected} that each vertex of~$X_{\ell-1}'$ expands to a connected subgraph of~$X_\ell'$. By induction on~$\ell$, this implies that each $X_\ell'$ is connected. Hence, every edge cut of $\cay \bigl( F_2; A^{\pm1} \cup \{s^{\pm1}\} \bigr)/{\lsim{\ell}}$ must contain some edge of~$X_\ell^s$ that is not in $\{e_1,e_2\}$. This contradicts \fullcref{2factor}{2edges} (combined with \cref{FnCut}).
\end{proof}

\begin{acknowledgments}
We thank Sean Legge for sharing \cref{LeggeEg} and allowing us to include it in our paper.
We also thank Agelos Georgakopoulos for several useful discussions and for providing \cref{uniqueFn}.

This research was conducted while the first author had a postdoctoral position at the Department of Mathematics and Computer Science of the University of Lethbridge.

Our main results on free groups were suggested by the output of 
\textsf{sagemath} \cite{sagemath} computer programs. These programs are available online at
\raggedright
\url{https://arxiv.org/src/2303.11124v2/anc/}.
\end{acknowledgments}

\bibliographystyle{Miraftab-Morris}
\raggedright
\bibliography{references.bib}
\end{document}